\numberwithin{equation}{section}
\newtheorem{satz}{Satz}[section]
\newtheorem{theorem}[satz]{Theorem}
\newtheorem{proposition}[satz]{Proposition}
\newtheorem{corollary}[satz]{Corollary}
\newtheorem{lemma}[satz]{Lemma}
\newtheorem{assumption}[satz]{Assumption}
\newtheorem{remark}[satz]{Remark}
\newtheorem{example}[satz]{Example}
\DeclareMathOperator{\E}{{\mathbb E}}
\DeclareMathOperator{\R}{{\mathbb R}}
\DeclareMathOperator{\N}{{\mathbb N}}
\DeclareMathOperator{\Var}{Var} \DeclareMathOperator{\Cov}{Cov}
\providecommand{\eps}{\varepsilon}
\renewcommand{\phi}{\varphi}
\renewcommand{\theta}{\vartheta}
\renewcommand{\subset}{\subseteq}
\providecommand{\abs}[1]{\lvert #1 \rvert}
\providecommand{\norm}[1]{\lVert #1 \rVert}
\providecommand{\babs}[1]{{\Bigl\lvert #1 \Bigr\rvert}}
\providecommand{\scapro}[2]{\langle #1,#2 \rangle}
\providecommand{\floor}[1]{\lfloor #1 \rfloor}
\newcommand{\mtc}{\mathcal}
\newcommand{\mbf}{\mathbf}
\newcommand{\mbr}{\mathbb{R}}
\newcommand{\wt}[1]{{\widetilde{#1}}}
\newcommand{\wh}[1]{{\widehat{#1}}}
\newcommand{\ind}[1]{{\mbf{1}(#1)}}
\newcommand{\paren}[1]{\left(#1\right)}
\newcommand{\set}[1]{\left\{#1\right\}}
\newcommand{\LeftEqNo}{\let\veqno\@@leqno}
\newcounter{nbdrafts}
\newcommand{\checknbdrafts}{
\ifnum \thenbdrafts > 0
\@latex@warning@no@line{**********************************************************************}
\@latex@warning@no@line{* The document contains \thenbdrafts \space draft note(s)}
\@latex@warning@no@line{**********************************************************************}
\fi}
\newtheorem{myassmpt}{}
\newcommand{\setmyassmpttag}[1]{%
  \renewcommand{\themyassmpt}{#1}
}
\begin{document}
\title{Optimal adaptation for early stopping\\ in statistical inverse problems\footnote{Financial support by the DFG via Research Unit 1735 {\it Structural Inference in Statistics} and SFB 1294 {\it Data Assimilation} is gratefully acknowledged.}}
\author{\parbox[t]{3.5cm}{\centering Gilles Blanchard\\
[2mm]
 \footnotesize{\it
Institute of Mathematics\\
Universit\"at Potsdam}\\
{  gilles.blanchard@math.\\uni-potsdam.de}}
\parbox[t]{4.5cm}{\centering Marc Hoffmann
\\[2mm]
 \footnotesize{\it
CEREMADE\\
Universit\'e Paris-Dauphine}\\
\mbox{}hoffmann@ceremade.dauphine.fr
} \parbox[t]{4.5cm}{\centering Markus
 Rei\ss\\[2mm]
 \footnotesize{\it
Institute of Mathematics\\
Humboldt-Universit\"at zu Berlin}\\
\mbox{} mreiss@math.hu-berlin.de}}


\maketitle

\begin{abstract}
For linear inverse problems $Y=\mathsf{A}\mu+\xi$, it is classical to recover the unknown signal $\mu$ by iterative regularisation methods $(\widehat \mu^{(m)}, m=0,1,\ldots)$ and halt at a data-dependent iteration $\tau$
using some stopping rule, typically  based on a discrepancy principle, so that the weak (or prediction) squared-error $\|\mathsf{A}(\widehat \mu^{(\tau)}-\mu)\|^2$ is controlled. In the context of statistical estimation with stochastic noise $\xi$, we study oracle adaptation (that is, compared to the best possible stopping iteration) in strong squared-error $\E\big[\|\widehat \mu^{(\tau)}-\mu\|^2\big]$.

For a residual-based stopping rule oracle adaptation bounds are established for general spectral regularisation methods. The proofs use bias and variance transfer techniques from weak prediction error to strong $L^2$-error, as well as convexity arguments and concentration bounds for the stochastic part. Adaptive early stopping for the Landweber method is studied in further detail and illustrated numerically.
\end{abstract}

\noindent {\it Key words and Phrases:} Linear inverse problems. Early stopping. Discrepancy principle. Adaptive estimation. Oracle inequality. Landweber Iteration.\\
\noindent {\it AMS subject classification:} 65J20, 62G05.

\section{Introduction and main results}

\subsection{Motivation}

\subsubsection*{Statistical linear inverse problems} We wish to recover a signal (a function, an image) from noisy data when the observation of the signal is further challenged by the action of a linear operator.  As an illustrative example, we consider the model of inverse regression in dimension $d=1$ over $[0,1]$. We observe
\begin{equation} \label{eq:inverse regression}
Y_k = A\mu(k/n)+\sigma \xi_k,\;\;k=1,\ldots, n
\end{equation}
where $\mu \in L^2([0,1])$ is the signal of interest, $A:L^2([0,1]) \rightarrow L^2([0,1])$ is a bounded linear operator (with $A\mu$ a continuous function), $\sigma>0$ is a measurement noise level and $\xi_1, \ldots, \xi_n$ are independent standard normal random variables.
An idealised version of \eqref{eq:inverse regression} is given by  the continuous observation of
\begin{equation} \label{eq:WNM}
Y(t)=A\mu(t)+\delta \dot W(t),\;\;t \in [0,1],
\end{equation}
where $\dot W$ is a Gaussian white noise in $L^2([0,1])$ with noise level
\begin{equation} \label{noise level calib}
\delta = \frac{\sigma}{\sqrt{n}}.
\end{equation}
For the asymptotics $n\to\infty$  the rigorous statistical equivalence between \eqref{eq:inverse regression} and \eqref{eq:WNM} goes back to Brown and Low \cite{BL} and was extended to higher dimensions and possibly  $\sigma\to 0$ in Rei\ss\ \cite{Re}. This setting of statistical inverse problems is classical and has numerous practical applications, see  among many other references 
Mair and Ruymgaart \cite{MR},
Cohen {\it et al.} \cite{CHR},
Bissantz {\it et al.} \cite{BHMR} and the  survey by Cavalier \cite{C}.

\subsubsection*{Early stopping and regularization}
Most implemented estimation or recovery methods for $\mu$ are based on a combination of discretisation and iterative inversion or regularisation. Start with an approximation space $V_D \subset L^2([0,1])$ with $\text{dim}(V_D)=D\le n$. First, suppose that \eqref{eq:inverse regression} is observed without noise, {\it i.e.}, $\sigma=0$. An approximation $\mu_D$ for $\mu$ is then obtained by minimising the criterion
$$\|Y-A\mu\|_n^2=\frac1n\sum_{k=1}^n\big(Y_k - A\mu(k/n)\big)^2 \rightarrow \min_{\mu \in V_D}!$$
Using gradient descent (also called Landweber iteration in this context), we obtain the fixed point iteration  for $ \mathsf{A} = A|_{V_D}$:
\begin{equation} \label{algo no noise}
\mu^{(0)}=0,\;\;\mu^{(m+1)}=\mu^{(m)}+ \mathsf{A}^\ast\big(Y-\mathsf{A}\mu^{(m)}\big).
\end{equation}
If $\| \mathsf{A}^\ast\mathsf{A} \|<2$, we have the convergence $\mu^{(m)}\rightarrow \mu_D$ as $m\rightarrow \infty$.

The same program applies when the data are noisy: we fix a large approximation space $V_D$ and transfer our data into the approximating linear model
\begin{equation} \label{eq:linear model}
Y=\mathsf{A}\mu +\sigma \xi
\end{equation}
with  $\mu \in \R^D$, $\mathsf{A} \in \R^{n\times D}$ and $Y,\xi \in \R^n$, with obvious matrix-vector notation. In formal analogy with \eqref{algo no noise} we obtain a sequence of iterations
\begin{equation} \label{algo noise}
\widehat \mu^{(0)}=0,\;\;\widehat \mu^{(m+1)}=\widehat \mu^{(m)}+ \mathsf{A}^\ast\big(Y-\mathsf{A}\widehat \mu^{(m)}\big).
\end{equation}
The presence of a noise term generates a classical conflict as $m$ grows: the iterates $\widehat \mu^{(0)}, \widehat \mu^{(1)}, \ldots, \widehat \mu^{(m)},\ldots$ are ordered with decreasing bias
$\norm{\E[\widehat \mu^{(m)}] - \mu}$ and increasing variance $\E[\norm{\widehat \mu^{(m)} - \E[\widehat \mu^{(m)}]}^2]$, where $\E$ denotes expectation. Thus, early stopping at some iteration $m$ serves as a regularisation method which simultaneously reduces numerical and statistical complexity at the cost of a bias term.

More generally, spectral regularisation methods for linear inverse problems take the
form
$\widehat \mu_{(\alpha)} = f_\alpha(\mathsf{A}^\ast \mathsf{A}) \mathsf{A}^\ast Y$,
where $(f_\alpha)_{\alpha >0}$ is a one-parameter
family of functions $\mathbb{R}_+ \rightarrow \mathbb{R}_+$  satisfying certain generic conditions, see Engl {\it et al.} \cite{EHN}.
In this parametrisation the limit $\alpha\rightarrow 0$ corresponds to less regularisation, with $f_0(u)=1/u$ being the unregularised inverse. In this paper, we will consider such general regularisation schemes using the reparametrisation
$g(t,\lambda) = \lambda^2 f_{t^{-2}}(\lambda^2)$; hence,  a larger  regularisation
parameter $t>0$, to be interpreted as computational time, indicates less regularisation (or a larger variance) and $\lim_{t\to\infty}g(t,\lambda)=1$.

Regularisation methods can be iterative or not, but it is common that the regularisation parameter $t$
is chosen from a fixed in advance grid $t_0 < t_1 < \ldots$, $t_m \rightarrow \infty$ as $m \rightarrow \infty$,
which we still refer to as ``iterations''. Conversely,
any regularisation method defined only on a discrete grid, such as Landweber iteration, can be extended to a continuous regularisation method by appropriate  interpolation, see also Remark \ref{rk:discretisation error} on the discretisation error below.
We note that even for methods that are not intrinsically iterative (for instance Tikhonov regularisation
$g(t,\lambda) = (1 + (t\lambda)^{-2})^{-1}$), computing the estimates for larger $t$ is generally more
resource-intensive since this computation is often numerically less stable.

\subsection{Adaptivity, oracle approach and early stopping}

There are several ways to choose $t = \widehat t  = \widehat t(Y)$ from the data  $Y$ to try to achieve
close to optimal performance.
%
Recent results are formulated within the  oracle approach, comparing the error of $\widehat \mu^{(\widehat t)}$ to the minimal error among $(\widehat\mu^{(t)})_{t>0}$. An idealized oracle
inequality would  take the form
\begin{equation}
  \label{eq:oracletype}
  \forall \mu: \qquad
\E\big[\norm{\mu - \widehat \mu^{(\wh t)}}^2 \big]\lesssim \inf_{t>0} \E\big[\norm{\mu - \widehat \mu^{(t)}}^2\big],
\end{equation}
where  $\lesssim$ indicates inequality up to a constant. Such an inequality can be seen as a convenient way
to transfer an {\em a priori} optimal parameter choice to an {\em a posteriori} one. Namely, given a family of signal  classes $\mathcal{S}_r$ indexed by some regularity
parameter $r$, assume that for any $r$, there exists an {\em a priori} regularisation parameter
choice $t^*_r$ such that $\widehat \mu^{(t^*_r)}$ has minimax-optimal convergence rate over
$\mu \in \mathcal{S}_r$. Then if \eqref{eq:oracletype} holds for the data-dependent
choice $\wh{t}$, it implies that this {\em a posteriori} rule also enjoys minimax convergence rates
over the regularity classes $(\mathcal{S}_r)$.
For more concrete statements, such as adaptation over classes of Sobolev ellipsoids, see {\it e.g.} Cavalier  \cite{C}.
The advantage of an oracle inequality is that it is a stronger
statement than a posteriori optimality over a certain family: it implies adaptation for any given individual
signal $\mu$.

 Typical statistical methods to determine $\wh t$ use (generalized) cross validation (Wahba \cite{Wa}), penalized empirical risk minimisation (Cavalier and Golubev \cite{CG}) or Lepski's balancing principle for inverse problems (Math\'e and Pereverzev \cite{MP}). Some of these rules can be transferred to deterministic inverse problems see {\it e.g.} Pereverzev and Schock \cite{PS}. All these methods lead to
some form of oracle inequality of the type \eqref{eq:oracletype}, and thus to
minimax adaptation over suitable regularity classes.
They share, however, the drawback that  the estimators $\widehat \mu^{(t)}$ have first to be computed up to some maximal iteration (or general parameter choice) $T$, prescribed prior to data analysis, and then be compared to each other in some way in order to finally determine $\wh{t}$. Computing all estimators in the first place seems an undesirable waste of resources,
in particular in hindsight those for $t> \widehat t$.

This state of the art for statistical inverse problems stands in contrast with the  deterministic inverse problem setting, in which the noise
$\xi$ in the model \eqref{eq:linear model} is assumed fixed and of norm bounded by~1.
In that setting, 
it is well-known that the {\em discrepancy principle},
consisting in stopping for the first iteration such that the residual
$R_t=\|Y-\mathsf{A}\widehat \mu^{(t)}\|$
is smaller than $\tau \sigma$ (for some constant $\tau>1$), is an {\em a posteriori}
rule enjoying, under mild conditions, optimal adaptivity in the deterministic sense over signal classes $\mathcal{S}_r$ defined
by source conditions (see  Engl {\it et al.} \cite{EHN} for a precise analysis).  Yet, there is no easy transfer of results from the deterministic setting to the statistical; let us point out in particular
that (a) the optimal rates in the statistical and deterministic settings are different,
(b) under the white noise model the typical order of the squared norm of the noise
is not constant but grows linearly with the output space dimension, and (c) in the statistical setting,
cancellation and law of large number effects due to independence of the noise coordinates play a crucial role,
whereas the deterministic setting is worst-case (or ``adversarial'') under the constraint $\norm{\xi}\leq 1$.

  The stopping rules we  consider will in fact be similar to the discrepancy principle.
  Hansen \cite{Ha} discusses practical issues, in particular
  modifications of the discrepancy principle for statistical noise in Chapter 5. For statistical inverse problems  Blanchard and Math\'e \cite{BM}, Lu and Math\'e \cite{LuM} introduce regularised residuals in order to encompass the fact that $R_t^2$ becomes arbitrarily large as $D$ grows.
  Our approach will not require such further regularisations.

It turns out that one cannot establish  full oracle adaptation of the form
\eqref{eq:oracletype}; in a nutshell, the proposed stopping rule will
only be oracle adaptive with respect to a range of possible
regularisation parameters $t$ depending on the the variance level of
estimator $\widehat \mu^{(t)}$ and on the total discretisation dimension
$D$. In a previous paper \cite{ourpapercutoff}, it has been shown for
the specific case of truncated SVD regularisation that a lower
bound holds which prevents adaptation over the full range for early stopping rules. If the oracle error is too small, then any stopping rule must incur an additional error of larger order. In the present
paper on general regularisation schemes this means that we cannot start at $t=0$, but can only consider stopping after a certain minimal number $t_0>0$ of iterations.

Let us point out that in the deterministic approach  the noise level $\sigma$ must be known in advance in order to apply successfully a discrepancy principle, an observation going back to Bakushinski \cite{B}. 
An advantage of the statistical approach of \eqref{eq:inverse regression} and \eqref{eq:linear model} is that the noise level $\sigma^2$ can be estimated from the data $Y$, see {\it e.g.} Golubev \cite{G}. This is transparent in the limiting model \eqref{eq:WNM} since $\delta^2$ related to $\sigma^2$ and the number $n$ of observations in \eqref{noise level calib} is identified by the continuous observation of $(Y(t), t \in [0,1])$ thanks to its quadratic variation.

\subsection{Mathematical setting}

Our analysis for the model \eqref{eq:linear model} will use the representation of estimators in the singular value decomposition (SVD):
 let $(\mathsf{A}^\ast \mathsf{A})^{1/2}$ have eigenvalues
$$1 \ge  \lambda_1\ge \lambda_2 \ge \ldots \ge \lambda_D >0$$
with a corresponding orthonormal basis of eigenvectors $(v_1,\ldots, v_D)$ in terms of the empirical scalar product $\scapro{a}{b}_n=\frac1n\sum_{i=1}^Da_ib_i$, $a,b\in\R^D$. We obtain the diagonal SVD model in terms of $\mu_i=\langle \mu,v_i \rangle_n$, $Y_i=\langle Y,w_i\rangle_n, w_i=\tfrac{\mathsf{A}^\ast v_i}{\|\mathsf{A}v_i\|_n}$ and
\begin{equation} \label{eq: def seq model}
Y_i = \lambda_i \mu_i+ \delta \varepsilon_i,\;\;i=1,\ldots, D,
\end{equation}
where the $\varepsilon_i$ are independent standard Gaussian random variables and $\delta=\frac{\sigma}{\sqrt n}$ is the noise level. Our objective is to recover the signal $\mu = (\mu_i)_{1 \le i \le D}$ with best possible accuracy from the data $(Y_i)_{1 \le i \le D}$.

We do not rely on the calculation of the  SVD, which  is computationally rarely feasible, but for the analysis we employ the following SVD representation of a linear estimator $\widehat \mu^{(t)}$:
\begin{equation}\label{Eqmufilter}
\widehat \mu_i^{(t)} = \gamma_i^{(t)}\lambda_i^{-1}Y_i= \gamma_i^{(t)}\mu_i+ \gamma_i^{(t)}\lambda_i^{-1}\delta\varepsilon_i,\;\;i=1,\ldots D.
\end{equation}
We thus specify linear estimation procedures by  {\it filters} $(\gamma_i^{(t)})_{i=1,\ldots,D,t \ge 0}$ that satisfy
$\gamma_i^{(t)} \in [0,1]$, $\gamma_i^{(0)}=0$ and $\gamma_i^{(t)} \uparrow 1$ as $t\rightarrow \infty$.
These filter properties are satisfied by typical spectral regularisation methods, see Example \ref{detailed filters} below.

The squared bias-variance decomposition of the mean integrated squared error
(MISE) writes
$$\E\big[\|\widehat \mu^{(t)}-\mu\|^2\big] = B_t^2(\mu)+V_t$$
with
\begin{equation} \label{def biais et var forts}
B_t^2(\mu)  = \sum_{i = 1}^D(1-\gamma_i^{(t)})^2\mu_i^2\;\;\text{and}\;\;
V_t  =\delta^2\sum_{i = 1}^D(\gamma_i^{(t)})^2\lambda_i^{-2}.
\end{equation}
In distinction with the weak norm quantities defined below, we shall call $B_t(\mu)$ {\it strong bias} and $V_t$ {\it strong variance}.



The  estimators we consider take the form $\widehat \mu^{(\tau)} = (\widehat \mu_i^{(\tau)})_{1 \le i \le D}$ where
$\widehat \mu_i^{(\tau)} = \gamma_i^{(\tau)}\lambda_i^{-1}Y_i.$
As for the discrepancy principle, we search for a stopping rule $\tau$ based on the information generated by the  residual
\begin{equation} \label{def continuous residual}
R_t^2=\|Y-\mathsf{A}\widehat \mu^{(t)}\|^2=\sum_{i = 1}^D(1-\gamma_i^{(t)})^2Y_i^2,\quad t\ge 0.
\end{equation}
The information provided by $R_t^2$ becomes transparent by considering  the {\it weak or prediction norm $\norm{v}_{\mathsf A}=\norm{{\mathsf A}v}$} and by decomposing  the weak norm error $ \E[\|\widehat \mu^{(t)}-\mu\|_{\mathsf{A}}^2] = B_{t,\lambda}^2(\mu)+V_{t,\lambda}$ into a {\it weak squared bias} $B_{t,\lambda}^2(\mu)$ and a {\it weak variance} $V_{t,\lambda}$:
\begin{align}
 B_{t,\lambda}^2(\mu)&=\|\E[\widehat \mu^{(t)}]-\mu\|_{\mathsf{A}}^2=\sum_{i = 1}^D\big(1-\gamma_i^{(t)}\big)^2\lambda_i^2\mu_i^2,\label{DefWeakBias}\\
 V_{t,\lambda}& = \E\Big[\|\widehat \mu^{(t)}-\E[\widehat \mu^{(t)}]\|_{\mathsf{A}}^2\Big]=\delta^2\sum_{i = 1}^D\big(\gamma_i^{(t)}\big)^2.\label{DefWeakVar}
 \end{align}
Then a bias-corrected residual $R_t^2$  estimates the weak squared bias:
\begin{equation} \label{WeakBiasEst}
\E\Big[R_t^2-\delta^2\sum_{i = 1}^D(1-\gamma_i^{(t)})^2\Big]=B_{t,\lambda}^2(\mu).
\end{equation}
We are led to consider stopping rules of the form
\begin{equation} \label{def early stopping}
\tau = \inf\big\{t\ge t_0:R_t^2\le \kappa\big\}
\end{equation}
for some initial smoothing step $t_0\ge 0$ and a threshold value $\kappa>0$. A residual $R_t^2$
larger than an appropriate choice of $\kappa$ indicates strong evidence that
there is relevant information about $\mu$ beyond $t$.

\subsection{Overview of main results} \label{sec:main results}

In Section \ref{sec:upper bounds}, we first establish in Proposition \ref{PropMainBound} an oracle inequality for $\widehat \mu^{(\tau)}$ in weak-norm by comparing to $\widehat \mu^{(t^\ast)}$, where
the deterministic stopping index
\begin{equation}\label{EqOracleProxy}
t^\ast = t^\ast(\mu) = \inf\big\{t\ge t_0: \E_\mu[R_t^2] \le \kappa\big\}
\end{equation}
is interpreted as an {\it oracle proxy} and $\E_\mu$ emphasizes the dependence of the expectation on $\mu$. We transfer the weak estimates into strong estimates thanks to appropriate assumptions on the filter functions $(\gamma_i^{(t)})_{1 \le i \le D}$ as well as on ${\mathsf{A}}$, {\it i.e.}, on  the singular values $(\lambda_i)_{1 \le i \le D}$. Under the mild condition $\delta^2\sqrt D\lesssim B_{t^\ast,\lambda}^2(\mu)\lesssim \delta^2D$  we then establish in Theorem \ref{ThmOrIneqStrong} and its Corollary \ref{CorStrongOrIneq} the oracle-type strong norm inequality
\begin{equation} \label{oracle like strong}
\E\big[\norm{\widehat\mu^{(\tau)}-\mu}^2\big]\lesssim \E\big[\norm{\widehat\mu^{(t^\ast)}-\mu}^2\big].
\end{equation}
Let us emphasize  that classical interpolation arguments between Hilbert scales, usually applied to control the approximation error under the discrepancy principle ({\it e.g.}, Section 4.3 in Engl {\it et al.} \cite{EHN}), cannot be used for an oracle and thus non-minimax approach. It remains to investigate the performance of the deterministic oracle proxy $t^\ast$  in connection with the choice of the threshold $\kappa$ that is free in the above estimates. This is the topic of Section \ref{sec:choice of kappa}. We argue that the choice $\kappa=D\delta^2$, up to deviations of order $\sqrt D\delta^2$, {\it e.g.} due to variance estimation, yields rate-optimal results. The study is conducted by comparing $t^\ast$ with
{\it weakly} and {\it strongly balanced oracles}
\begin{align}
t_{\mathfrak w} &= t_{\mathfrak w}(\mu)=\inf \big\{t \ge t_0: B_{t,\lambda}^2(\mu)\le V_{t,\lambda}\big\},\label{Eqtw}\\
t_{\mathfrak s} &= t_{\mathfrak s}(\mu)=\inf \big\{t \ge t_0: B_t^2(\mu)\le V_t\big\}.\label{Eqts}
\end{align}
We obtain the bias-variance balance $B_{t_{\mathfrak w},\lambda}^2(\mu)=V_{t_{\mathfrak w},\lambda}$ unless $ B_{t_0,\lambda}^2(\mu)< V_{t_0,\lambda}$ and by the monotonicity of weak bias and variance in $t$ it easily follows that
\begin{equation}\label{EqBalNarOracle}
\E\big[\|\widehat \mu^{(t_{\mathfrak w})}- \mu\|_{\mathsf{A}}^2\big] \le 2\inf_{t\ge t_0} \E\big[\|\widehat \mu^{(t)}- \mu\|_{\mathsf{A}}^2\big]
\end{equation}
and analogously for the strongly balanced oracle in strong norm. In a concrete setting the residual, bias and variance as a function of $t$ together with the indices $\tau$, $t^\ast$, $t_{\mathfrak w}$ and $t_{\mathfrak s}$ are visualized by Figure \ref{Fig2} (right) in Section \ref{discussion and numerics} below. The balanced oracles take over the role of the classical oracles (as error minimisers) and form natural benchmarks for sequential stopping rules.
We establish in Theorem \ref{thm: oracle end}
a bound in terms of the rescaled weak oracle error such that, in general,
\[\E\big[\norm{\widehat\mu^{(\tau)}-\mu}^2\big]\lesssim\max\Big(\E\big[\norm{\widehat\mu^{(t_{\mathfrak s})}-\mu}^2\big],
t_{\mathfrak w}^2\E\big[\norm{\widehat\mu^{(t_{\mathfrak w})}-\mu}_{\mathsf A}^2\big]\Big).
\]
For a polynomial decay of singular values this bound becomes a more tractable interpolation-type bound (Corollary \ref{CorTauws}). Assuming that the spectral method has a sufficiently large qualification, Theorem \ref{thm: oracle end} implies in particular that $\widehat\mu^{(\tau)}$ attains the optimal rate $\delta^{2\beta/(2\beta+2p+d)}$ for matrices $\mathsf A$ with a degree $p$ of ill-posedness  and signals $\mu$ in Sobolev-type ellipsoids of dimension $d$ with a regularity $\beta$ ranging within the appropriate adaptation interval (Corollary \ref{CorTauMinimax} and \cite{ourpapercutoff}).

Section \ref{discussion and numerics} studies more specifically the early stopping rule for  Landweber iterations. First, a  large class of signals $\mu$ is identified for which an oracle inequality
\begin{equation*}
\E\big[\norm{\widehat\mu^{(\tau)}-\mu}^2\big]\le C_{\tau,\mathfrak s}\E\big[\norm{\widehat\mu^{(t_{\mathfrak s})}-\mu}^2\big]
\end{equation*}
holds (Corollary \ref{CorLWOrineq}). Then some numerical results show the scope and the limitations of adaptive early stopping,  confirming the theoretical findings.

\section{Oracle proxy bounds} \label{sec:upper bounds}

We consider the family of linear estimators $(\widehat\mu{(t)})_{t\ge 0}$ from \eqref{Eqmufilter} characterised by the filters $(\gamma_i^{(t)})_{i=1,\ldots,D,t\ge 0}$.
Recall the basic assumptions:
$t \mapsto \gamma_i^{(t)}$ is a nondecreasing continuous function with $\gamma_i^{(0)}=0$, and $\gamma_i^{(t)} \uparrow 1$ as $t\rightarrow \infty$.

Given  the residual
$R^2_t$ from \eqref{def continuous residual},
we introduce the residual-based stopping rule
$\tau = \inf\big\{t\ge t_0:R_t^2\le \kappa\big\}$ from \eqref{def early stopping}.
Since $R_{t}^2\downarrow 0$ holds for $t\uparrow\infty$ and $t\mapsto R_{t}^2$ is continuous, we  have $R_{\tau}^2=\kappa$ unless $R_{t_0}^2<\kappa$ already, in which case $\tau=t_0$. Also, recall the oracle proxy $t^\ast=\inf\big\{t\ge t_0\,:\,\E[R_{t}^2]\le\kappa\big\}$, which by the same argument satisfies $\E[R_{t^\ast}^2]=\kappa$, unless already $\E[R_{t_0}^2]<\kappa$ holds, implying $t^\ast=t_0$.

\subsection{Upper bounds in weak norm} \label{upper in weak norm}

\begin{proposition}\label{PropMainBound}
The following inequality holds in weak norm:
\begin{equation} \label{eq:propmainbound}
\E\big[\norm{\widehat\mu^{(\tau)}-\widehat\mu^{(t^\ast)}}_{\mathsf{A}}^2\big] \le \Big(2D\delta^4+4\delta^2 B_{t^\ast,\lambda}^2(\mu)\Big)^{1/2}.
\end{equation}
This implies the oracle-type inequality
\begin{equation} \label{oracleexact}
\E\big[\norm{\widehat\mu^{(\tau)}-\mu}^2_{\mathsf{A}}\big]^{\frac{1}{2}} \le  \E\big[\norm{\widehat\mu^{(t^\ast)}-\mu}^2_{\mathsf{A}}\big]^{\frac{1}{2}} +\Big(2D\delta^4+4\delta^2 B_{t^\ast,\lambda}^2(\mu)\Big)^{1/4} .
\end{equation}
\end{proposition}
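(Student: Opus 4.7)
The plan is to bound $\E[\|\widehat\mu^{(\tau)}-\widehat\mu^{(t^\ast)}\|_{\mathsf A}^2]$ in three steps: (i) relate this difference pointwise to $|R_\tau^2-R_{t^\ast}^2|$; (ii) bound that residual gap by the centred fluctuation $|R_{t^\ast}^2-\E[R_{t^\ast}^2]|$; (iii) compute $\Var R_{t^\ast}^2$ directly. The oracle inequality \eqref{oracleexact} will then come from Minkowski's inequality in $L^2(\Omega;\|\cdot\|_{\mathsf A})$ applied to the splitting $\widehat\mu^{(\tau)}-\mu=(\widehat\mu^{(\tau)}-\widehat\mu^{(t^\ast)})+(\widehat\mu^{(t^\ast)}-\mu)$.

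For step (i) I would work in the SVD basis: since $\widehat\mu_i^{(t)}=\gamma_i^{(t)}\lambda_i^{-1}Y_i$, one has
\[
\|\widehat\mu^{(s)}-\widehat\mu^{(t)}\|_{\mathsf A}^2=\sum_{i=1}^D\bigl(\gamma_i^{(s)}-\gamma_i^{(t)}\bigr)^2Y_i^2.
\]
For $s\le t$ the nonnegative quantities $a_i=1-\gamma_i^{(s)}\ge b_i=1-\gamma_i^{(t)}$ satisfy $(a_i-b_i)^2\le(a_i-b_i)(a_i+b_i)=a_i^2-b_i^2$, so summing with weights $Y_i^2$ yields $\|\widehat\mu^{(s)}-\widehat\mu^{(t)}\|_{\mathsf A}^2\le R_s^2-R_t^2$. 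Applied symmetrically this gives the almost-sure inequality $\|\widehat\mu^{(\tau)}-\widehat\mu^{(t^\ast)}\|_{\mathsf A}^2\le|R_\tau^2-R_{t^\ast}^2|$.

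For step (ii) I would exploit monotonicity and continuity of $t\mapsto R_t^2$: the definition of $\tau$ yields the compact formula $R_\tau^2=R_{t_0}^2\wedge\kappa$. A case analysis on $\{\tau>t_0\}$ versus $\{\tau=t_0\}$ (and on $t^\ast>t_0$ versus $t^\ast=t_0$, where $\E[R_{t^\ast}^2]$ equals $\kappa$ respectively is at most $\kappa$) shows that in every case $R_\tau^2$ lies between $R_{t^\ast}^2$ and $\E[R_{t^\ast}^2]$ --- using also $R_{t^\ast}^2\le R_{t_0}^2$ by monotonicity. Consequently
\[
|R_\tau^2-R_{t^\ast}^2|\le|R_{t^\ast}^2-\E[R_{t^\ast}^2]|\quad\text{a.s.},
\]
and Cauchy--Schwarz delivers $\E[|R_\tau^2-R_{t^\ast}^2|]\le\sqrt{\Var R_{t^\ast}^2}$.

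Step (iii) is a direct independent-sum computation: since $Y_i\sim\cN(\lambda_i\mu_i,\delta^2)$, the Gaussian identity $\Var(Y_i^2)=2\delta^4+4(\lambda_i\mu_i)^2\delta^2$ and the bound $(1-\gamma_i^{(t^\ast)})^4\le(1-\gamma_i^{(t^\ast)})^2\le 1$ yield $\Var R_{t^\ast}^2\le 2D\delta^4+4\delta^2 B_{t^\ast,\lambda}^2(\mu)$. Combining (i)--(iii) gives \eqref{eq:propmainbound}, and the $L^2(\Omega)$-triangle inequality on the splitting above produces \eqref{oracleexact} --- the outer $1/4$-exponent appearing because $(\E[\cdot])^{1/2}$ is then applied to a bound of the form $(2D\delta^4+4\delta^2 B_{t^\ast,\lambda}^2(\mu))^{1/2}$. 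I expect the main obstacle to be step (ii): the asymmetry between the random stopping time $\tau$ and the deterministic $t^\ast$ produces genuine boundary cases near $t_0$, and it is the identification $R_\tau^2=R_{t_0}^2\wedge\kappa$, together with the ``in-between'' geometry of $R_\tau^2$ relative to $R_{t^\ast}^2$ and $\E[R_{t^\ast}^2]$, that makes the unified a.s.\ bound possible.
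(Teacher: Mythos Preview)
Your proof is correct and follows essentially the same approach as the paper: both arguments bound $\|\widehat\mu^{(\tau)}-\widehat\mu^{(t^\ast)}\|_{\mathsf A}^2$ pointwise by $|R_\tau^2-R_{t^\ast}^2|$ via $(a-b)^2\le a^2-b^2$ and filter monotonicity, then bound this by $|R_{t^\ast}^2-\E[R_{t^\ast}^2]|$ using the stopping rule, and finally apply Cauchy--Schwarz together with the Gaussian variance identity $\Var(Y_i^2)=2\delta^4+4(\lambda_i\mu_i)^2\delta^2$. Your compact formula $R_\tau^2=R_{t_0}^2\wedge\kappa$ and the ``in-between'' phrasing in step (ii) handle the boundary cases at $t_0$ a bit more explicitly than the paper, which instead splits on $\{\tau>t^\ast\}$ versus $\{\tau<t^\ast\}$; the content is the same.
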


\begin{proof}
The main (completely deterministic) argument uses consecutively the definition of the weak norm, the inequality $(A-B)^2\le \abs{A^2-B^2}$ for $A,B\ge 0$ and the bounds $R_\tau^2= \kappa\ge \E[R_{t^\ast}^2]$ for $\tau>t^\ast\ge t_0$ and $R_\tau^2\le \kappa= \E[R_{t^\ast}^2]$ for $t^\ast>\tau\ge t_0$:
\begin{align*}
\norm{\widehat\mu^{(t^\ast)}-\widehat\mu^{(\tau)}}_{\mathsf{A}}^2
&=
\sum_{i=1}^D (\gamma_i^{(t^\ast)}-\gamma_i^{(\tau)})^2Y_i^2\\
&\le \sum_{i=1}^D \abs{(1-\gamma_i^{(t^\ast)})^2-(1-\gamma_i^{(\tau)})^2}Y_i^2\\
&=(R_{t^\ast}^2-R_\tau^2){\bf 1}(\tau>t^\ast)+(R_{\tau}^2-R_{t^\ast}^2){\bf 1}(\tau<t^\ast)\\
&\le (R_{t^\ast}^2-\E[R_{t^\ast}^2]){\bf 1}(\tau>t^\ast)+(\E[R_{t^\ast}^2]-R_{t^\ast}^2){\bf 1}(\tau<t^\ast)\\
&\le\abs{R_{t^\ast}^2-\E[R_{t^\ast}^2]}\\
&= \babs{\sum_{i=1}^D(1-\gamma_i^{(t^\ast)})^2\big(\delta^2(\eps_i^2-1)+2\lambda_i\mu_i\delta\eps_i\big)}.
\end{align*}
Note that  the passage from the second to the third line simply follows from the uniform monotonicity of filters, {\it i.e.},  $\gamma_i^{(t^\ast)}\le \gamma_i^{(\tau)}$  for all $i$ if $t^\ast\le\tau$ and $\gamma_i^{(t^\ast)}\ge \gamma_i^{(\tau)}$ for all $i$ if $t^\ast\ge\tau$.
By bounding the variance of the first term (applying the Cauchy-Schwarz inequality) and using $\abs{1-\gamma_i^{(t)}}\le 1$, $\Var(\eps_i^2)=2$ and $\Cov(\eps_i^2,\eps_i)=0$,
this implies:
\begin{align*}
\E\big[\norm{\widehat\mu^{(t^\ast)}-\widehat\mu^{(\tau)}}_{\mathsf{A}}^2\big]
&\le \Big(2\delta^4\sum_{i=1}^D (1-\gamma_i^{(t^\ast)})^4+ 4\delta^2\sum_{i=1}^D(1-\gamma_i^{(t^\ast)})^4\lambda_i^2\mu_i^2\Big)^{1/2}\\
&\le \Big(2D\delta^4+4\delta^2 B_{t^\ast,\lambda}^2(\mu)\Big)^{1/2}.
\end{align*}
From this first inequality the second follows by the triangle inequality.
\end{proof}

Let us point out that Proposition \ref{PropMainBound} continues to hold under minimal assumptions on the noise: the variables $\eps_i$ need merely match the first four Gaussian moments.

The last term in the right-hand side of \eqref{oracleexact} is of the order of the geometric mean of $B_{t^\ast,\lambda}$ and $\delta$, and thus asymptotically negligible whenever the oracle proxy squared-error is of larger order than $\delta^2$ 
(since $\E[\norm{\widehat\mu^{(t^\ast)}-\mu}^2_{\mathsf{A}}]\ge B^2_{t^\ast,\lambda}$). Consequently,  the oracle-type inequality \eqref{oracleexact} is asymptotically exact, in the sense that

\begin{equation} \label{eq:def exact oracle}
\E\big[\norm{\widehat\mu^{(\tau)}-\mu}_{\mathsf{A}}^2\big] \le \big(1+o(1)\big)\E\big[\norm{\widehat\mu^{(t^\ast)}-\mu}^2_{\mathsf{A}}\big]
\end{equation}
as $\delta \rightarrow 0$, whenever the oracle-type squared-error on the right-hand side
is of larger order than $\sqrt{D}\delta^2$. Our stopping rule thus gives reliable estimators when the weak variance is at least of order $\sqrt{D}\delta^2$, and we henceforth choose the initial smoothing step $t_0$ as
\begin{equation}\label{Eqt0}
 t_\circ=\inf\{t\ge 0\,:\,V_{t,\lambda}=C_\circ D^{1/2}\delta^2\}\quad\text{ for some constant }C_\circ\ge 1.
\end{equation}
Note that $t_\circ$ is well defined for $C_\circ<D^{1/2}$ since $V_{t,\lambda}$ increases from $0$ at $t=0$ to $D\delta^2$ as $t\uparrow \infty$, and that it is easily computable, since $V_{t,\lambda}$ is obtained as squared norm of the estimation method applied to the data $\delta \mathsf{A}\textbf{1}$ with ${\textbf 1}=(1,\ldots,1)^\top$. Moreover, we find back exactly the critical order from the lower bound in \cite{ourpapercutoff}.
\begin{remark}[Controlling the discretisation error] \label{rk:discretisation error}
  For the discrete stopping rule $\widehat m=\inf\{m\in\N\,:\,m\ge t_0,\,R_m^2\le \kappa\}$ we obtain, using $(A+B)^2\le 2(A^2+B^2)$, $(A-B)^2\le\abs{A^2-B^2}$ for $A,B\ge 0$,
  filter monotonicity and $\widehat m \ge \tau$:
\[ \E\big[\norm{\widehat\mu^{(\widehat m)}-\widehat\mu^{(\tau)}}_{\mathsf{A}}^2\big] \le 2\E\Big[B_{\tau,\lambda}^2-B_{\widehat m,\lambda}^2\Big] +2\delta^2\E\Big[\sum_{i=1}^D\big(\gamma_i^{(\widehat m)}-\gamma_i^{(\tau)}\big)^2\eps_i^2\Big].
\]
By $\widehat m-1<\tau$ and the filter monotonicity we  further  bound the right-hand side by
\[  2\max_{m=1,\ldots,D}\Big(B_{m-1,\lambda}^2-B_{m,\lambda}^2 +\delta^2\E\big[{\textstyle\max_{i\le D}\eps_i^2}\big]\norm{\gamma^{(m)}-\gamma^{(m-1)}}_{}^2\Big).
\]
Note that the filter differences are usually not large; for Landweber iteration,  for instance, $\norm{\gamma^{(m)}-\gamma^{(m-1)}}_{}^2\le \sum_{i=1}^D\lambda_i^4$. Because of $\E[\max_{i\le D}\eps_i^2]\lesssim \log D$, the second term is usually of order $\delta^2\log D$ and much smaller  than the error term in the oracle inequality \eqref{oracleexact}. The bias difference term depends on the signal and does not permit a universal bound, but observe that $\widehat m$ stops later than $\tau$ (or at $\tau$) and thus $\widehat\mu^{(\widehat m)}$  incurs less bias in the error bound than $\widehat\mu^{(\tau)}$.
\end{remark}

\subsection{Upper bounds in strong norm} \label{upper in strong norm}

Most common filter functions used in inverse problems are
obtained from {\em spectral regularisation methods} of the form
\begin{equation}
\label{eq:specreg}
\gamma^{(t)}_{i} = g(t,\lambda_i),
\end{equation}
where $g(t,\lambda)$ is a regulariser function
$\mbr_+ \times \mbr_+ \rightarrow [0,1]$, see for instance Engl {\it et al.}, Chapter 4 \cite{EHN} (with the notation
$g(t,\lambda)=\lambda^2 g_{t^{-2}}(\lambda^2)$ in terms of their function $g_\alpha$\,).  Let us collect all required properties and discuss conditions under which they are fulfilled.

A first set of assumptions concerns the regulariser function:

\begin{assumption}[{\bf R}]
\setmyassmpttag{{\bf R1}}
\begin{myassmpt}\label{AssGMon}
  The function $g(t,\lambda)$ is nondecreasing in $t$ and $\lambda$, continuous in $t$ with $g(0,\lambda)=0$
  and $\lim_{t \rightarrow \infty} g(t,\lambda) = 1$ for any fixed $\lambda>0$.
\end{myassmpt}
\setmyassmpttag{{\bf R2}}
\begin{myassmpt}\label{AssGMonRatio}
  For all $t\ge t'\ge t_\circ$, the function $\lambda \mapsto \frac{1-g(t',\lambda)}{1-g(t,\lambda)}$
  is nondecreasing.
\end{myassmpt}
\setmyassmpttag{{\bf R3}}
\begin{myassmpt}\label{AssGComp}
  There exist positive constants $\rho, \beta_-, \beta_+$
  such that for all $t \ge t_\circ$ and $\lambda \in (0,1]$, we have
  \begin{equation} \label{def rho}
   \beta_- \min \big(\paren{t \lambda}^{\rho},1\big) \le g(t,\lambda)  \le  \min \big(\beta_+ \paren{t \lambda}^{\rho},1\big).
    \end{equation}
\end{myassmpt}
\end{assumption}

\ref{AssGMonRatio} is not needed given \ref{AssGComp} if we allow less accurate control in the constants of Lemma \ref{One-sided Karamata relations} (see Proposition \ref{prop:suffcond} and its proof below). Still, it is usually satisfied. The value $\rho$ in \ref{AssGComp} should be distinguished from the qualification of a regularisation method, as introduced in Corollary \ref{CorTauMinimax} below.
While the qualification is intended to control the approximation error, the constant $\rho$ introduced in \eqref{def rho} guarantees instead the control of $\E_0[R_t^2]$  (expectation under null signal)
and $V_t,V_{t,\lambda}$ for large $D$, a pure noise property independent of the signal.

\begin{example} \label{detailed filters} Let us list some commonly used filters ({\it cf.} Engl {\it et al.} \cite{EHN}) that all can be directly seen to satisfy Assumption {\bf (R)} with $\rho=2$ in \ref{AssGComp}.
 \begin{enumerate}
 \item The Landweber filter, as developed in \eqref{algo noise} of the introduction, is obtained by gradient descent of step size 1 (note $\lambda_1 \le 1$):
$$\widehat\mu^{(m)}=\sum_{i=0}^{m-1}  (I-\mathsf{A}^\ast\mathsf{A})^i \mathsf{A}^\ast Y=(I-(I-\mathsf{A}^\ast \mathsf{A})^m) (\mathsf{A}^\ast\mathsf{A})^{-1}\mathsf{A}^\ast Y.$$
   When interpolating with $t=\sqrt{m}$ for $t\geq 1$ this yields $g(t,\lambda)=1-(1-\lambda^2)^{t^2}$;
   we further interpolate by $g(t,\lambda) = (t\lambda)^2$ for $t\in[0,1]$ for technical convenience: then
for assumption {\bf (R3)} $\beta_-=\frac{1}{2},\beta_+=1$ works in \eqref{def rho}
for all $t\geq 0, \lambda \in [0,1]$.
 \item The Tikhonov filter $g(t,\lambda)=(1+(t\lambda)^{-2})^{-1}$ is obtained from the minimisation in $\mu\in\R^D$
 \begin{equation} \label{minimTikho}
   \|Y-\mathsf{A}\mu\|^2+t^{-2}\|\mu\|^2\rightarrow\text{min}_\mu!
\end{equation}
Assumption {\bf (R3)} is satisfied with $\beta_-=\frac{1}{2},\beta_+=1$.
\item The $m$-fold iterated Tikhonov estimator $\widehat\mu^{(\alpha,m)}$ is obtained by minimising iteratively in $m$ the criterion  \eqref{minimTikho}, but with penalty $\alpha^{2}\norm{\mu-\widehat\mu^{(\alpha,m-1)}}^2$, where $\widehat\mu^{(\alpha,1)}$ is the standard Tikhonov estimator with $t=\alpha^{-1}$. The reparametrisation $t=\sqrt m$ yields the filters $g_\alpha(t,\lambda)=1-(1+\alpha^{-2}\lambda^2)^{-t^2}$.
Assumption {\bf (R3)} is satisfied with $\beta_-=\frac{\alpha^{-2}}{2},\beta_+=\alpha^{-2}$.
\item Showalter's method or asymptotic regularisation is the general continuous analogue of iterative linear regularisation schemes. Its  filter is given by $g(t,\lambda)=1-e^{-t^2\lambda^2}$. Assumption {\bf (R3)} is satisfied with $\beta_-=\frac{1}{2},\beta_+=1$.
\end{enumerate}
\end{example}

A second  assumption concerns the spectrum and is satisfied, for instance, for singular values of the form $\lambda_i=ci^{-1/\nu}(\log(i+1))^p$ with some $c,\nu>0$, $p\in\R$.
\begin{assumption}[{\bf S}]
  There exist constants $\nu_-,\nu_+>0$ and $L \in \N$ such that for all
   $L \le k \le D$:
\begin{equation}
  \label{Critbis}
     0< L^{-1/\nu_-} \le \frac{\lambda_{k}}{\lambda_{\lceil \frac{k}{L} \rceil} },
     \;\;\text{ and }\;\;
    \frac{\lambda_{k}}{\lambda_{\lfloor \frac{k}{L} \rfloor} } \le L^{-1/\nu_+} < 1.
\end{equation}
\end{assumption}

%

  The indices $\nu_-$ and $\nu_+$ are related to the so-called lower and upper Matuszewska indices of the function $F(u) = \#\set{i:\lambda_i \ge u}$  in the theory of  $\mathcal O$-regularly varying functions, see Bingham {\it et al.}
  \cite{BGT}, Section 2.1. 
  In classical definitions
  these indices
  are defined asymptotically.
  Since we aim at non-asymptotic results, we  require a version holding
  for all $k$; to account for possible multiple eigenvalues at the beginning of the sequence,
  we allow $L$ to be an arbitrary integer (typically $L$ would be larger than the multiplicity
  of $\lambda_1$).
For connections to inverse and singular value problems in numerical analysis see Djurcic {\it et al.} \cite{DNT} or Fleige \cite{F}.

Assumptions {\bf (S)} and {\bf (R)} on the filter functions $\gamma^{(t)}_i$ and on the spectrum of $\mathsf A$ enable us to develop our theory, e.g. transfer weak norm estimates for the bias and the variance terms into estimates in strong norm, via the following estimates:

\begin{proposition}
  \label{prop:suffcond}
  Suppose Assumptions {\bf (S)} and {\bf (R)} are satisfied
  with $\rho>\nu_+$ 
  and the filter functions given by \eqref{eq:specreg}.
  Then there exist constants $\pi \ge 1,\, C_{V,\lambda}\ge 1,\,c_\lambda>0,\, C_{\ell_1,\ell_2}\ge 1$, depending only on $\rho,\beta_-,\beta_+,L,\nu_-,\nu_+$, such that the following properties are satisfied:
\setmyassmpttag{{\bf A1}}
\begin{myassmpt}
\label{AssBias2}
For all  $t \ge t' \ge t_\circ$, the sequence $\Big(\frac{1-\gamma_{i}^{(t')}}{1-\gamma_{i}^{(t)}}\Big)_{i=1,\ldots,D}$ with values in $[0,\infty]$ is
nonincreasing in $i$.
\end{myassmpt}

\setmyassmpttag{{\bf A2}}
\begin{myassmpt}\label{AssMon}
For all $i'\le i$ and $t\ge  t_\circ$, we have $\gamma_i^{(t)}\le\gamma_{i'}^{(t)}$.
\end{myassmpt}
\setmyassmpttag{{\bf A3}}
\begin{myassmpt}\label{AssVar}
For some $\pi\ge 1$ there exists a constant $C_{V,\lambda}\ge 1$ so that for all $t \ge t' \ge t_\circ$, we have
$$V_{t}\le C_{V,\lambda}(V_{t,\lambda}/V_{t',\lambda})^\pi V_{t'}.$$
\end{myassmpt}

\setmyassmpttag{{\bf A4}}
\begin{myassmpt}\label{AssLambdaL2}

There exists $c_\lambda>0$ such that for every $k=1,\ldots, D$:
$$\frac 1k \sum_{i=1}^k\lambda_i^{-2}\ge c_\lambda^2 \lambda_k^{-2}.$$
\end{myassmpt}

\setmyassmpttag{{\bf A5}}
\begin{myassmpt}\label{Assl1l2}
There exists a constant $C_{\ell^1, \ell^2}$ such that for all $t\ge t_\circ$ we have
$$\sum_{i=1}^D\gamma_i^{(t)}\le C_{\ell^1,\ell^2} \sum_{i=1}^D(\gamma_i^{(t)})^2.$$
\end{myassmpt}

\end{proposition}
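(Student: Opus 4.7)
The proof of the five properties splits naturally into direct monotonicity deductions (A1, A2), a spectrum-only estimate (A4), and more delicate joint estimates involving both the filter structure and the spectrum (A5, A3).

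First, (A1) and (A2) follow immediately from the spectral-filter form $\gamma_i^{(t)}=g(t,\lambda_i)$ combined with (R1) and (R2), using that $i\mapsto\lambda_i$ is nonincreasing: for (A2) monotonicity of $g$ in $\lambda$ suffices; for (A1) the quotient $(1-\gamma_i^{(t')})/(1-\gamma_i^{(t)})=(1-g(t',\lambda_i))/(1-g(t,\lambda_i))$ is a nondecreasing function of $\lambda_i$ by (R2), hence nonincreasing in $i$. For (A4) I would use only the first inequality of (S): it gives $\lambda_{\lceil k/L\rceil}\le L^{1/\nu_-}\lambda_k$, so $\lambda_i^{-2}\ge L^{-2/\nu_-}\lambda_k^{-2}$ for all $i$ in the block $\lceil k/L\rceil\le i\le k$, and the block contains at least $(1-1/L)k$ indices; summing yields $c_\lambda^2:=(1-1/L)L^{-2/\nu_-}$. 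Small $k<L$ is handled by an elementary direct estimate.

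For (A5) and (A3) I would introduce the effective dimension $k_t:=\max\{i:t\lambda_i\ge 1\}$ and use (R3) to squeeze $\gamma_i^{(t)}$ between $\beta_-$ and $1$ for $i\le k_t$ and between $\beta_-(t\lambda_i)^\rho$ and $\beta_+(t\lambda_i)^\rho$ for $i>k_t$. The key technical input is a Karamata-type tail estimate (the Lemma \ref{One-sided Karamata relations} invoked in the text): iterating the second inequality in (S) gives $\lambda_{L^j k}\le L^{-j/\nu_+}\lambda_k$, so
\[
\sum_{i>k}\lambda_i^{\rho}\le \sum_{j\ge 0}L^{j+1}k\cdot L^{-j\rho/\nu_+}\lambda_k^{\rho}\;\lesssim\;k\lambda_k^{\rho},
\]
where the geometric sum converges precisely because $\rho>\nu_+$. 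Applied at $k=k_t$ with $t\lambda_{k_t}\asymp 1$, this yields $\sum_{i>k_t}\gamma_i^{(t)}\lesssim k_t$ and similarly $\sum_{i>k_t}(\gamma_i^{(t)})^2\lesssim k_t$. Combined with the head-block estimate $\beta_-^2 k_t\le \sum_{i\le k_t}(\gamma_i^{(t)})^2\le k_t$, both $\sum\gamma_i^{(t)}$ and $\sum(\gamma_i^{(t)})^2$ are comparable to $k_t$, proving (A5).

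For (A3) the same decomposition yields $V_{t,\lambda}/\delta^2\asymp k_t$, and using (A4) on the head block $i\le k_t$ gives a principal contribution $\asymp k_t\lambda_{k_t}^{-2}$ to $V_t/\delta^2$; the tail contribution $\le\beta_+^2 t^{2\rho}\sum_{i>k_t}\lambda_i^{2\rho-2}$ is handled by the same Karamata argument (possibly after enlarging $\pi$ if $2\rho-2$ is close to $\nu_+$). Since (S) iterated gives $\lambda_{k_t}/\lambda_{k_{t'}}\lesssim (k_{t'}/k_t)^{1/\nu_+}$ for $k_t\ge k_{t'}$, we obtain
\[
\frac{V_t}{V_{t'}}\;\lesssim\;\frac{k_t}{k_{t'}}\Bigl(\frac{\lambda_{k_t}}{\lambda_{k_{t'}}}\Bigr)^{-2}\;\lesssim\;\Bigl(\frac{k_t}{k_{t'}}\Bigr)^{1+2/\nu_+}\;\asymp\;\Bigl(\frac{V_{t,\lambda}}{V_{t',\lambda}}\Bigr)^{1+2/\nu_+},
\]
so (A3) holds with $\pi=1+2/\nu_+$ and a constant $C_{V,\lambda}$ depending only on the listed parameters. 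The main obstacle I foresee is the clean bookkeeping of the non-dominant tail of $V_t$ in the borderline regime where $2\rho-2$ may not strictly exceed $\nu_+$; the proposition's flexibility in choosing $\pi$ (any constant $\ge 1$) should absorb this, and the remark that (R2) could be dropped at the cost of sharper constants in Lemma \ref{One-sided Karamata relations} confirms that (R3) together with (S) already carries the essential information.
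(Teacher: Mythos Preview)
Your treatment of (A1), (A2), (A4) matches the paper's, and your (A5) argument is essentially the paper's as well, except that the paper uses the threshold $t\lambda_i\ge\zeta$ with $\zeta=\min(\tilde\zeta,1)$ from Lemma~\ref{le:ineqlambda1} rather than $t\lambda_i\ge 1$; this is needed because $t_\circ\lambda_1\ge\tilde\zeta$ may be strictly less than $1$, so your $k_t$ could be empty even for $t\ge t_\circ$, which would break both the head estimate and the Karamata tail bound (applied at $k=k_t$).

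For (A3) there is a genuine gap. Your argument rests on $V_t/\delta^2\asymp k_t\lambda_{k_t}^{-2}$, whose upper bound on the tail requires $\sum_{i>k_t}\lambda_i^{2\rho-2}\lesssim k_t\lambda_{k_t}^{2\rho-2}$, i.e.\ the Karamata relation \eqref{eq:karsupbis} with exponent $2\rho-2$. That needs $2\rho-2>\nu_+$, which is \emph{not} assumed here (only $\rho>\nu_+$ is). When $2\rho-2\le\nu_+$ the tail of $V_t$ carries an explicit $D$-dependence that cannot be absorbed into a power of $k_t$, so ``enlarging $\pi$'' does not fix the two-sided estimate as written. (There is also a sign slip: the bound $\lambda_{k_t}/\lambda_{k_{t'}}\lesssim(k_{t'}/k_t)^{1/\nu_+}$ goes in the wrong direction for your purpose; you need an \emph{upper} bound on $\lambda_{k_{t'}}/\lambda_{k_t}$, which iterating the first inequality of {\bf (S)} gives with exponent $1/\nu_-$, not $1/\nu_+$.) Your idea is salvageable---for instance, a direct termwise bound gives $V_t/V_{t'}\lesssim(t/t')^{2\rho}$ regardless of $2\rho-2$ versus $\nu_+$, and then $t/t'\asymp\lambda_{k_{t'}}/\lambda_{k_t}\lesssim(k_t/k_{t'})^{1/\nu_-}$ yields $\pi=2\rho/\nu_-$---but this is not the argument you sketched.

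The paper takes a quite different route for (A3): it replaces $g(t,\lambda)$ by the explicit surrogate $h(t\lambda)=(1+(t\lambda)^{-\rho})^{-1}$ (so that $\gamma_i^{(t)}/h(t\lambda_i)\in[\beta_-,2\beta_+]$), sets $G(t)=\sum_i h(t\lambda_i)^2$ and $H(t)=\sum_i\lambda_i^{-2}h(t\lambda_i)^2$, and then proves the log-derivative inequality $H'(t)/H(t)\le\pi\,G'(t)/G(t)$ by expanding both sides and checking four cross-term inequalities $A_iB_j\le C\,A_jB_i$. The only Karamata inputs are with exponent $\rho$ (never $2\rho-2$), so the argument goes through under the stated hypothesis $\rho>\nu_+$ alone.
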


 The condition $\rho>\nu_+$ is often encountered in statistical inverse problems,  ensuring, independently of $D$, a control of the variances of the estimators.
The proof of Proposition \ref{prop:suffcond} is delayed until Appendix \ref{proof:prop:suffcond}. In Appendix \ref{proof:LemVart} we also present the proof of the following result, which gives the strong-to-weak variance order $V_t\thicksim (t\wedge \lambda_D^{-1})^2V_{t,\lambda}$ in this framework.

\begin{lemma}\label{LemVart}
  Under Assumptions {\bf (R)} and {\bf (S)}, 
  we have for all $t\ge t_\circ$
\[ V_{t,\lambda}\le C_V(t \wedge \lambda_D^{-1})^{-2} V_t\text{ with }C_V=\tfrac{L^{1+2/\nu_-}}{L-1}\beta_-^{-2}.\]
Under Assumptions {\bf (R)}, {\bf (S)} with $\rho>1+\nu_+/2$ we have for all $t\ge t_\circ$:
\[ V_{t,\lambda}\ge c_V(t\wedge \lambda_D^{-1})^{-2} V_t,\]
with
\[
c_V :=  \min\bigg(1,\bigg(\tfrac{C_\circ \sqrt{D}(1-L^{1-2\rho/\nu_+})}{(L-1)\beta_+}\bigg)^{\frac{1}{\rho}}\bigg)
\tfrac{(1-L^{1-(2\rho-2)/\nu_+})\beta_-^2}{L^{2(\rho+1)/\nu_-}}.
\]
\end{lemma}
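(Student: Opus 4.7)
The plan is to compare $V_{t,\lambda}$ and $V_t$ by decomposing both sums at the threshold $N := N_t := |\{i:\lambda_i \ge t^{-1}\}|$. On the ``head'' $i\le N$, Assumption \textbf{(R3)} gives $(\gamma_i^{(t)})^2 \ge \beta_-^2$; on the ``tail'' $i > N$, $(\gamma_i^{(t)})^2 \le \beta_+^2(t\lambda_i)^{2\rho}$ and, crucially, $\lambda_i^{-2} > t^2$. The case $N = D$ corresponds to $t \ge \lambda_D^{-1}$, which is exactly where the clipping in $t\wedge \lambda_D^{-1}$ activates.

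For the upper bound $V_{t,\lambda}\le C_V(t\wedge\lambda_D^{-1})^{-2}V_t$, I would bound $V_{t,\lambda} \le \delta^2(N + \sum_{i>N}(\gamma_i^{(t)})^2)$ and $V_t \ge \delta^2\big(\beta_-^2 \sum_{i\le N}\lambda_i^{-2} + t^2 \sum_{i>N}(\gamma_i^{(t)})^2\big)$, so that the tail parts already match with factor $t^{-2}$. For the head I invoke Assumption \textbf{(A4)}, $\sum_{i\le N}\lambda_i^{-2}\ge c_\lambda^2 N\lambda_N^{-2}$, with $c_\lambda^2 = (L-1)/L^{1+2/\nu_-}$. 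When $N = D$ this immediately produces the $\lambda_D^2$ factor; when $N < D$, a single-step \textbf{(S)} estimate $\lambda_N \le L^{1/\nu_-}\lambda_{N+1} < L^{1/\nu_-} t^{-1}$ converts $\lambda_N^{-2}$ into $L^{-2/\nu_-} t^2$, after which the two bounds combine to give the stated $C_V$.

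The lower bound $V_{t,\lambda} \ge c_V(t\wedge \lambda_D^{-1})^{-2} V_t$ proceeds through the symmetric upper estimate $V_t \le \delta^2\big(\sum_{i\le N}\lambda_i^{-2} + \beta_+^2 t^{2\rho}\sum_{i>N}\lambda_i^{2\rho-2}\big)$ against $V_{t,\lambda} \ge \delta^2 \beta_-^2\big(N + t^{2\rho}\sum_{i>N}\lambda_i^{2\rho}\big)$. The technical ingredients are the dyadic sum bounds obtained by iterating \textbf{(S)} over geometric blocks: $\sum_{i\le k}\lambda_i^{-2}\lesssim k\lambda_k^{-2}$ (the upper companion to \textbf{(A4)}), and the tail bounds $\sum_{i>k}\lambda_i^{2\rho}\le\frac{(L-1)k}{1-L^{1-2\rho/\nu_+}}\lambda_k^{2\rho}$ and $\sum_{i>k}\lambda_i^{2\rho-2}\le\frac{(L-1)k}{1-L^{1-(2\rho-2)/\nu_+}}\lambda_k^{2\rho-2}$, the last requiring $\rho>1+\nu_+/2$ (which is where this hypothesis enters). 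For $1\le N\le D$ these yield $V_t\lesssim Nt^2$ and $V_{t,\lambda}\gtrsim N$, so the ratio is bounded by a constant times $t^{-2}$.

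The main obstacle will be Case $N = 0$, i.e.\ $t < \lambda_1^{-1}$, where the entire filter is in the polynomial regime and the ratio $V_t/V_{t,\lambda}$ collapses to the $t$-independent quantity $(\beta_+/\beta_-)^2 \sum\lambda_i^{2\rho-2}/\sum\lambda_i^{2\rho}$. No $t$-uniform bound by $c_V^{-1}t^2$ is then possible without invoking $t\ge t_\circ$. I would exploit the defining identity $V_{t_\circ,\lambda} = C_\circ\sqrt D\delta^2$ together with the polynomial-regime upper bound $V_{t_\circ,\lambda}\le\beta_+^2 t_\circ^{2\rho}\sum\lambda_i^{2\rho}$ and the dyadic estimate on that sum, to extract $(t_\circ\lambda_1)^{2\rho} \gtrsim C_\circ\sqrt D(1-L^{1-2\rho/\nu_+})/((L-1)\beta_+^2)$; raising to the power $1/\rho$ then produces the factor $\min\big(1,(C_\circ\sqrt D(1-L^{1-2\rho/\nu_+})/((L-1)\beta_+))^{1/\rho}\big)$ appearing in $c_V$. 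Taking the worst constant across the three cases $N=D$, $1\le N<D$, and $N=0$ yields the displayed values of $C_V$ and $c_V$.
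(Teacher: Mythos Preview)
Your decomposition at $N_t=\#\{i:\lambda_i\ge t^{-1}\}$ is sound and would indeed prove both inequalities, but it does not recover the constants as stated, and the paper's route is somewhat different.

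For the upper bound, you compose two estimates: \textbf{(A4)} gives $\sum_{i\le N}\lambda_i^{-2}\ge c_\lambda^2 N\lambda_N^{-2}$ with $c_\lambda^2=(L-1)L^{-1-2/\nu_-}$, and then the single-step relation $\lambda_N<L^{1/\nu_-}t^{-1}$ gives $\lambda_N^{-2}>L^{-2/\nu_-}t^2$. Multiplying, you pick up $L^{1+4/\nu_-}$ rather than the stated $L^{1+2/\nu_-}$. The paper avoids this redundancy by a direct counting argument on the spectral distribution function $F(u)=\#\{i:\lambda_i\ge u\}$: from \textbf{(S)} one gets $F(1/t)\le \tfrac{L}{L-1}\#\{i:1/t\le\lambda_i<L^{1/\nu_-}/t\}$, and for indices in this narrow band one has simultaneously $(L^{-1/\nu_-}t\lambda_i)^{-2}>1$ and $\gamma_i^{(t)}\ge\beta_-$, so the head $N$ is bounded in one step by $\tfrac{L^{1+2/\nu_-}}{L-1}\beta_-^{-2}t^{-2}\delta^{-2}V_t$.

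For the lower bound, the paper does not split into the three cases $N=0$, $1\le N<D$, $N=D$. Instead it introduces the shifted threshold $j_t^\ast=\min\{i:\lambda_i<\zeta/t\}$ with $\zeta=\min(1,\widetilde\zeta)$, where $\widetilde\zeta$ is exactly the quantity you isolate in your $N=0$ analysis (this is Lemma~\ref{le:ineqlambda1}, giving $t_\circ\lambda_1\ge\widetilde\zeta$). The point is that $t\ge t_\circ$ then forces $j_t^\ast\ge 2$, so the head is never empty and your separate $N=0$ case is absorbed. The tail is then handled by comparing $\sum_{i\ge j_t^\ast}\lambda_i^{2\rho-2}$ directly to $\sum_{i\ge j_t^\ast}\lambda_i^{2\rho}$ via the Karamata relations \eqref{eq:karsupbis} and \eqref{eq:karinfpospbis}, together with $\lambda_{j_t^\ast}\ge L^{-1/\nu_-}\zeta t^{-1}$; this is where the factor $L^{2(\rho+1)/\nu_-}$ in the stated $c_V$ arises. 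Your approach of bounding both $V_t$ and $V_{t,\lambda}$ by $N t^2$ and $N$ respectively is valid but produces a constant involving $\beta_+^2(L-1)$ rather than $L^{2(\rho+1)/\nu_-}$, so again it proves the lemma with a different (and not obviously comparable) constant.

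In short: no genuine gap, but your argument yields a variant of the lemma with looser constants, and the paper's choice of the $\zeta$-shifted threshold is what makes the case analysis cleaner and produces the displayed $C_V,c_V$ exactly.
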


\subsubsection*{Main oracle proxy result in strong norm}

We prove the main bound in strong norm first and provide the necessary technical tools afterwards. The weak-to-strong transfer of error bounds requires at least higher moment bounds, so that we derive immediately results in high probability. From now on, we consider $\tau=\inf\{t\ge t_\circ\,:\,R_t^2\le\kappa\}$ with $t_\circ$ from \eqref{Eqt0}.

\begin{theorem}\label{ThmOrIneqStrong}
Grant \ref{AssBias2}, \ref{AssMon}, \ref{AssVar}, \ref{AssLambdaL2} from Proposition \ref{prop:suffcond} with constants $\pi,C_{V,\lambda},c_\lambda$. Then for  $x\ge 1$ with probability at least $1-c_1e^{-c_2x}$, where $c_1,c_2>0$ are  constants depending on $c_\lambda$ only, we have the oracle-type inequality
\begin{equation}
  \label{eq:mainoracle}
  \norm{\widehat\mu^{(\tau)}-\mu}^2\le K \E\big[\norm{\widehat\mu^{(t^\ast)}-\mu}^2\big]
+2\delta^2x\lambda_{\floor x \wedge D}^{-2},
\end{equation}
with
\[K:= 4C_{V,\lambda}\Big(1+\Big(\frac{(4\sqrt D+12)\delta^2+\sqrt{32}\delta B_{t^\ast,\lambda}(\mu)}{\ind{t^\ast>t_\circ}\min\big(V_{t^\ast,\lambda},B_{t^\ast,\lambda}^2(\mu)\big) + \ind{t^\ast=t_\circ} V_{t_\circ,\lambda}}x\Big)^{1/2}\Big)^{2\pi}.
\]
\end{theorem}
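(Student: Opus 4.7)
The plan is to pipeline the weak-norm control of Proposition \ref{PropMainBound} through a weak-to-strong transfer driven by Assumptions \ref{AssBias2}--\ref{AssLambdaL2}, then to append high-probability concentration terms.

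First, I would upgrade the in-expectation weak bound of Proposition \ref{PropMainBound} to a high-probability statement. The deterministic chain in that proof gives $\norm{\widehat\mu^{(\tau)}-\widehat\mu^{(t^\ast)}}_{\mathsf A}^2 \le \babs{R_{t^\ast}^2 - \E[R_{t^\ast}^2]}$, and the right-hand side is a mixed quadratic-plus-linear Gaussian form with weights $(1-\gamma_i^{(t^\ast)})^2\in[0,1]$. Hanson--Wright for the chi-square part and a classical Gaussian deviation for the linear part yield, with probability at least $1-c_1 e^{-c_2 x}$, a bound of order $\sqrt{x D}\delta^2 + x\delta^2 + \sqrt{x}\delta B_{t^\ast,\lambda}(\mu)$, matching up to constants the numerator inside the square root that defines $K$.

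Second, I would transfer this weak-norm control to a strong-norm one by splitting $\widehat\mu^{(\tau)}-\widehat\mu^{(t^\ast)}$ into its deterministic bias-difference piece and its centered stochastic piece. For the stochastic piece, Assumption \ref{AssVar} yields $V_\tau\le C_{V,\lambda}(V_{\tau,\lambda}/V_{t^\ast,\lambda})^\pi V_{t^\ast}$ (and symmetrically for $\tau<t^\ast$), so the corresponding strong variance is comparable to $V_{t^\ast}$ up to a power of the weak-variance ratio. For the bias difference, monotonicity \ref{AssBias2} of $i\mapsto (1-\gamma_i^{(t')})/(1-\gamma_i^{(t)})$ combined with filter monotonicity \ref{AssMon} lets the $\mu$-weighted strong difference be bounded by its weak counterpart, up to the same ratio factor. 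Since $\tau$ and $t^\ast$ are both tied to the residual threshold $\kappa$, the Step 1 deviation translates into an explicit bound on $V_{\tau,\lambda}/V_{t^\ast,\lambda}$; dividing by $\min(V_{t^\ast,\lambda},B_{t^\ast,\lambda}^2(\mu))$ (which is why this $\min$ appears in the denominator of $K$) yields the $(1+\cdot)^{2\pi}$ multiplicative factor of the theorem, the exponent $2\pi$ reflecting that both the bias-difference transfer and the rescaling of the variance baseline contribute a factor $\pi$.

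Third, the triangle inequality $\norm{\widehat\mu^{(\tau)}-\mu}^2\le 2\norm{\widehat\mu^{(\tau)}-\widehat\mu^{(t^\ast)}}^2+2\norm{\widehat\mu^{(t^\ast)}-\mu}^2$ reduces matters to concentrating $\norm{\widehat\mu^{(t^\ast)}-\mu}^2$ around its expectation. Writing this norm as bias squared plus a Gaussian cross-term plus the weighted chi-square $\delta^2\sum_i(\gamma_i^{(t^\ast)})^2\lambda_i^{-2}\varepsilon_i^2$, a Hanson--Wright bound on the chi-square contribution --- in which Assumption \ref{AssLambdaL2} is used to treat the level-set split of the weights $\lambda_i^{-2}$ and to identify the scale $\lambda_{\lfloor x\rfloor\wedge D}^{-2}$ as the correct operator-norm substitute --- produces the additive term $2\delta^2 x\lambda_{\lfloor x\rfloor\wedge D}^{-2}$.

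The hard part will be the second step: producing the correct power on the variance ratio with constants sharp enough to give exactly the $K$ of the theorem, while simultaneously accommodating the case distinction $t^\ast > t_\circ$ versus $t^\ast=t_\circ$ that appears in the denominator of $K$. At the floor $t^\ast=t_\circ$ one cannot invoke Assumption \ref{AssVar} with $t^\ast$ as reference and must instead use the explicit calibration $V_{t_\circ,\lambda}=C_\circ\sqrt{D}\delta^2$ from \eqref{Eqt0}.
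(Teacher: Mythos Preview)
Your high-level ingredients (bias transfer via \ref{AssBias2}, variance transfer via \ref{AssVar}, chi-square concentration via \ref{AssLambdaL2}) are the right ones, but the architecture you propose has a genuine gap in Step~2, and the paper's proof is organised differently precisely to avoid it.

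The paper does \emph{not} pass through $\norm{\widehat\mu^{(\tau)}-\widehat\mu^{(t^\ast)}}^2$ and the triangle inequality. It starts from
\[
\norm{\widehat\mu^{(\tau)}-\mu}^2 \le 2B_\tau^2 + 2\delta^2\sum_{i=1}^D(\gamma_i^{(\tau)})^2\lambda_i^{-2}\eps_i^2,
\]
and the crucial step is to decouple the random $\tau$ from the same $\eps_i$ that appear in the sum. This is done by a convexity/extremal-point argument: under \ref{AssMon} the weights $(\gamma_i^{(\tau)})^2$ are nonincreasing in $i$ and lie in $[0,1]$, so the linear functional $w\mapsto\sum_i w_i\lambda_i^{-2}(\eps_i^2-1-\omega)$ attains its maximum over this simplex at a vertex $w_i=\ind{i\le k}$. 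This yields
\[
\delta^2\sum_i(\gamma_i^{(\tau)})^2\lambda_i^{-2}\eps_i^2 \le (1+\omega)V_\tau + \delta^2\max_{k=0,\ldots,D}\sum_{i=1}^k\lambda_i^{-2}(\eps_i^2-1-\omega),
\]
and the maximum is now over a \emph{deterministic} family, so the weighted $\chi^2$ maximal inequality (Lemma~\ref{LemStochBounddelta}, where \ref{AssLambdaL2} enters) gives the remainder $2\delta^2 x\lambda_{\lfloor x\rfloor\wedge D}^{-2}$. Your Step~2 writes ``Assumption \ref{AssVar} yields $V_\tau\le C_{V,\lambda}(V_{\tau,\lambda}/V_{t^\ast,\lambda})^\pi V_{t^\ast}$'' as if that controlled the stochastic piece, but $V_\tau$ is a deterministic functional of $\tau$, not the realised $\delta^2\sum_i(\gamma_i^{(\tau)})^2\lambda_i^{-2}\eps_i^2$; the passage from the latter to the former is exactly the missing decoupling. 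Your Step~3 places the $\chi^2$ concentration at the deterministic $t^\ast$, but it is needed at the random $\tau$.

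The same decoupling issue reappears when you want ``the Step~1 deviation [to] translate into an explicit bound on $V_{\tau,\lambda}/V_{t^\ast,\lambda}$''. The weak difference $\norm{\widehat\mu^{(\tau)}-\widehat\mu^{(t^\ast)}}_{\mathsf A}^2=\sum_i(\gamma_i^{(\tau)}-\gamma_i^{(t^\ast)})^2Y_i^2$ involves $Y_i^2$, while $V_{\tau,\lambda}-V_{t^\ast,\lambda}=\delta^2\sum_i((\gamma_i^{(\tau)})^2-(\gamma_i^{(t^\ast)})^2)$ does not; they are not directly comparable. The paper handles $V_{\tau,\lambda}^{1/2}-V_{t^\ast,\lambda}^{1/2}$ and $B_{\tau,\lambda}^2-B_{t^\ast,\lambda}^2$ separately (Propositions~\ref{PropConcVar} and \ref{PropConcBias}), each time using the convexity lemma (Lemma~\ref{LemConvArg}) to replace the $\tau$-dependent weights by a supremum over truncations, plus a Brownian-drift maximal inequality for the bias case. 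Finally, the exponent $2\pi$ does not arise from ``two factors of $\pi$'': the bias transfer (Lemma~\ref{LemBiasTrans}) is linear in the weak bias ratio, while the variance transfer via \ref{AssVar} contributes $(V_{\tau,\lambda}/V_{t^\ast,\lambda})^\pi=(V_{\tau,\lambda}^{1/2}/V_{t^\ast,\lambda}^{1/2})^{2\pi}$, and the square-root deviation bound of Proposition~\ref{PropConcVar} then produces the $(1+(\cdot)^{1/2})^{2\pi}$ shape.
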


\begin{proof}
We bound $(\gamma_i^{(\tau)} \lambda_i^{-1}Y_i-\mu_i)^2\le 2(1-\gamma_i^{(\tau)})^2\mu_i^2+2(\gamma_i^{(\tau)})^2\lambda_i^{-2}\delta^{2}\eps_i^2$ and we use the fact that  any linear function $f(w_1,\ldots,w_D)=\sum_{i=1}^D w_iz_i$ with $z_1,\ldots,z_D\in\R$ attains its maximum over $1\ge w_1\ge\cdots\ge w_D\ge 0$ at one of the extremal points where $w_i={\bf 1}(i\le k)$, $k\in\{0,\ldots,D\}$ ({\it cf.} also the proof of Lemma \ref{LemConvArg} below).
Under \ref{AssMon}  we thus obtain for $\omega>0$
\begin{align*}
\norm{\widehat\mu^{(\tau)}-\mu}^2&\le 2B_\tau^2+2\delta^2\sum_{i=1}^D \big(\gamma_i^{(\tau)}\big)^2\lambda_i^{-2}\eps_i^{2}\\
&\le 2B_\tau^2+2(1+\omega)V_\tau+2\delta^2\max_{1\ge w_1\ge\cdots\ge w_D\ge 0}\sum_{i=1}^D w_i\lambda_i^{-2}(\eps_i^{2}-1-\omega)\\
&= 2B_\tau^2+2(1+\omega)V_\tau+2\delta^2\max_{k=0,\ldots,D}\sum_{i=1}^k \lambda_i^{-2}(\eps_i^{2}-1-\omega).
\end{align*}
By Lemma \ref{LemStochBounddelta} in Appendix \ref{weighted chi2} below, for $\omega=1$ the last term is bounded by $2\delta^2x\lambda_{\floor x \wedge D}^{-2}$ with probability at least $1-C_1e^{-C_2x}$ with $C_1,C_2>0$ depending only on $c_\lambda$ from \ref{AssLambdaL2}.

For $\tau<t^\ast$ (and so $t^*>t_\circ$) we have $V_\tau\le V_{t^\ast}$, and the bias transfer bound $B_\tau^2\le (B_{\tau,\lambda}^2/B_{t^\ast,\lambda}^2)B_{t^\ast}^2$ is ensured by Lemma \ref{LemBiasTrans} below under \ref{AssBias2}. In addition, Proposition \ref{PropConcBias} guarantees that the weak bias $B_{\tau,\lambda}^2$ is with high probability close to the oracle proxy analogue $B_{t^\ast,\lambda}^2$ under \ref{AssMon}. We deduce more precisely that with probability at least $1-3e^{-x}$
($x\ge 1$):
\[ B_\tau^2\le \Big(1+ \ind{t^\ast > t_\circ}\frac{(4\sqrt D+12)\delta^2+\sqrt{32}\delta B_{t^\ast,\lambda}}{B_{t^\ast,\lambda}^2}x\Big)B_{t^\ast}^2.\]

On the other hand, for $\tau>t^\ast$ we have $B_\tau\le B_{t^\ast}$, and we derive, using \ref{AssVar} on the variance transfer and Proposition \ref{PropConcVar} below on the deviation between $V_{\tau,\lambda}$ and $V_{t^\ast,\lambda}$, that with probability at least $1-3e^{-x}$:
\[ V_\tau \le C_{V,\lambda}\Big(1+\Big(\frac{(4\sqrt D+2)\delta^2+\sqrt{8}\delta B_{t^\ast,\lambda}}{V_{t^\ast,\lambda}}x\Big)^{1/2}\Big)^{2\pi}V_{t^\ast}.\]
Observing $\E\big[\norm{\widehat\mu^{(t^\ast)}-\mu}^2\big]=B_{t^\ast}^2+V_{t^\ast}$, the result follows by taking the maximum of the two previous bounds and simplifying the constants.
\end{proof}

A direct consequence of the preceding result is a moment bound, which has the character of an oracle inequality under mild conditions on the weak bias at the oracle proxy $t^\ast$.

\begin{corollary}\label{CorStrongOrIneq}
In the setting of Theorem \ref{ThmOrIneqStrong} assume $C_1 \ind{t^\ast > t_\circ} V_{t_\circ,\lambda} \le B_{t^\ast,\lambda}^2(\mu)\le C_2\sup_{t>0}V_{t,\lambda}$ for some $C_1,C_2>0$. Then under Assumption {\bf (S)} for a constant $C_{\tau,t^\ast}$ only depending on $C_\circ,C_1,C_2,c_\lambda,\pi,C_{V,\lambda},\nu_-,L$:
\[\E\big[\norm{\widehat\mu^{(\tau)}-\mu}^2\big]\le C_{\tau,t^\ast} \E\big[\norm{\widehat\mu^{(t^\ast)}-\mu}^2\big].
\]
\end{corollary}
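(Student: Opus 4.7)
The strategy is to integrate the high-probability bound of Theorem~\ref{ThmOrIneqStrong} and absorb both the multiplicative factor $K$ and the additive residual $2\delta^2 x \lambda_{\lfloor x\rfloor \wedge D}^{-2}$ into $\E[\norm{\widehat\mu^{(t^\ast)} - \mu}^2]$. This proceeds in three steps: a $\mu$-independent bound on $K$, a tail-to-moment passage, and the domination of the stochastic residual by the oracle-proxy error.

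\emph{Step 1: bounding $K$.} The two hypotheses bracket $B_{t^\ast,\lambda}^2(\mu)$ between $C_1 V_{t_\circ,\lambda}$ (when $t^\ast > t_\circ$) and $C_2 \sup_{t>0} V_{t,\lambda} = C_2 D\delta^2$. Monotonicity of $t \mapsto V_{t,\lambda}$ gives $V_{t^\ast,\lambda} \ge V_{t_\circ,\lambda} = C_\circ \sqrt D \delta^2$, so the denominator of the fraction inside $K$ is at least $\min(1, C_1)\,C_\circ \sqrt D \delta^2$ in both cases $t^\ast > t_\circ$ and $t^\ast = t_\circ$. Using $B_{t^\ast,\lambda} \le \sqrt{C_2 D}\,\delta$, the numerator is at most of order $\sqrt D\,\delta^2$, so
\[
K \le 4 C_{V,\lambda}\bigl(1 + \sqrt{c\,x}\bigr)^{2\pi}
\]
for some $c = c(C_\circ, C_1, C_2)$; thus $K$ grows polynomially in $x$ with a $\mu$-independent prefactor depending only on $C_\circ, C_1, C_2, \pi, C_{V,\lambda}$.

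\emph{Step 2: tail to moment.} Setting $Z := \norm{\widehat\mu^{(\tau)} - \mu}^2$ and $M(x) := K(x)\E[\norm{\widehat\mu^{(t^\ast)}-\mu}^2] + 2\delta^2 x \lambda_{\lfloor x\rfloor \wedge D}^{-2}$, the theorem gives $\PP(Z > M(x)) \le c_1 e^{-c_2 x}$ for $x \ge 1$. A layer-cake argument with the change of variable $u = M(x)$, followed by integration by parts (the boundary term at infinity vanishes thanks to polynomial growth of $M$, the one at $1$ is nonpositive), yields
\[
\E[Z] \le M(1) + c_1 c_2 \int_1^\infty e^{-c_2 x} M(x)\, dx.
\]
The $K(x)\E[\cdots]$ contribution integrates to a finite constant times $\E[\norm{\widehat\mu^{(t^\ast)}-\mu}^2]$ by exponential-over-polynomial decay. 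For the noise contribution, iterating the Matuszewska-type Assumption~{\bf(S)}, $\lambda_k \ge L^{-1/\nu_-}\lambda_{\lceil k/L\rceil}$, yields $\lambda_k^{-2} \le L^{2/\nu_-} k^{2/\nu_-}\lambda_1^{-2}$, so $\int_1^\infty e^{-c_2 x}\, 2\delta^2 x \lambda_{\lfloor x\rfloor \wedge D}^{-2}\, dx$ is bounded by a finite constant depending on $c_2, L, \nu_-$ times $\delta^2 \lambda_1^{-2}$. Combining, $\E[Z] \le C_5 \E[\norm{\widehat\mu^{(t^\ast)}-\mu}^2] + C_6\, \delta^2 \lambda_1^{-2}$.

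\emph{Step 3: absorbing the residual.} It remains to dominate $\delta^2 \lambda_1^{-2}$ by $\E[\norm{\widehat\mu^{(t^\ast)}-\mu}^2]$. Using $\lambda_i \le \lambda_1$ for all $i$ together with the defining identity for $t_\circ$,
\[
V_{t_\circ} = \delta^2 \sum_{i=1}^D (\gamma_i^{(t_\circ)})^2 \lambda_i^{-2} \ge \lambda_1^{-2}\, V_{t_\circ,\lambda} = C_\circ \sqrt D\, \delta^2 \lambda_1^{-2} \ge C_\circ\, \delta^2 \lambda_1^{-2}.
\]
Monotonicity of $\gamma_i^{(t)}$ in $t$ gives $V_{t^\ast} \ge V_{t_\circ}$, so $\E[\norm{\widehat\mu^{(t^\ast)}-\mu}^2] \ge V_{t^\ast} \ge C_\circ \delta^2 \lambda_1^{-2}$, which absorbs the residual and delivers the claim with a constant depending exactly on $C_\circ, C_1, C_2, c_\lambda, \pi, C_{V,\lambda}, \nu_-, L$ (the parameter $c_\lambda$ enters through $c_1, c_2$). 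The only mildly delicate ingredient is the polynomial upper bound on $\lambda_k^{-2}$ extracted from Assumption~{\bf(S)}, which is where $L$ and $\nu_-$ arise; the rest is careful bookkeeping of the $x$-dependent constants in the high-probability inequality.
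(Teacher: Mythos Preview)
Your proof is correct and follows essentially the same route as the paper's own argument: bound $K$ by $C_\ast x^\pi$ using the bracketing of $B_{t^\ast,\lambda}^2$ and $V_{t^\ast,\lambda}\ge V_{t_\circ,\lambda}=C_\circ\sqrt D\,\delta^2$; control the additive remainder via the iterated use of Assumption~{\bf(S)} to get $\lambda_k^{-2}\le (Lk)^{2/\nu_-}\lambda_1^{-2}$; then integrate the tail bound in $x$ and absorb $\delta^2\lambda_1^{-2}$ through $\lambda_1^{-2}V_{t_\circ,\lambda}\le V_{t_\circ}\le V_{t^\ast}\le \E[\norm{\widehat\mu^{(t^\ast)}-\mu}^2]$. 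The paper compresses your Step~2 into the single sentence ``we can now integrate the bound with respect to $c_1e^{-c_2x}\,dx$'', whereas you spell out the layer-cake plus integration-by-parts explicitly; both arrive at the same place.
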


\begin{proof}
  Noting $\sup_{t>0}V_{t,\lambda}=D\delta^2$, and on  the one hand
  $V_{t^\ast,\lambda}\ge V_{t_\circ,\lambda}= C_\circ\sqrt D\delta^2
  \ge C_\circ C_2^{-1/2} B_{t^\ast,\lambda}(\mu) \delta$,  on the other hand,
   $B_{t^\ast,\lambda}^2(\mu) \ge C_1 V_{t_\circ,\lambda}$ (then further bounded as above) in the case $t^\ast>t_\circ$, it is simple to check that the factor $K$ in Theorem~\ref{ThmOrIneqStrong} is bounded for $x\ge 1$ as
 $  K \le C_* x^\pi$, where $C_*$ only depends on $C_\circ,C_1,C_2,C_{V,\lambda}$.
  For the remainder term in \eqref{eq:mainoracle},
  note that by Assumption {\bf (S)} for $m\in\N$ and $L^{m-1}<k\le L^m$
\[ \lambda_k^{-2}\le \lambda_{L^m \wedge D}^{-2}\le L^{2m/\nu_-}\lambda_1^{-2}\le (Lk)^{2/\nu_-}\lambda_1^{-2},\]
and thus
\[\delta^2x\lambda_{\floor x \wedge D}^{-2} \le \delta^2x(Lx)^{2/\nu_-}\lambda_1^{-2} \le C_\circ^{-1}L^{2/\nu_-}D^{-1/2} x^{1+2/\nu_-} \lambda_1^{-2}V_{t_\circ,\lambda}, \quad x\ge 1.
\]
  Due to the polynomial increase in $x$ both for $K$ and the remainder term, we can now integrate
  the bound with respect to $c_1e^{-c_2x}dx$, note $\lambda_1^{-2}V_{t_\circ,\lambda}\le
  V_{t_\circ} \leq V_{t^\ast}$ and obtain the announced result.
\end{proof}

The corollary shows that the estimator $\widehat\mu^{(\tau)}$ performs also in strong norm up to a constant as well as $\widehat\mu^{(t^\ast)}$ with the deterministic oracle proxy $t^\ast$. The main restriction is the choice of the minimal index $t_\circ$ according to \eqref{Eqt0}. For smaller $t_\circ$ the variability in the residual and thus in $\tau$ would induce a too high variability in $\widehat\mu^{(\tau)}$, compared to the variance of the oracle estimator.

\subsubsection*{Intermediate estimates from weak to strong norm}

We now set out in detail the ingredients used in the proof of Theorem \ref{ThmOrIneqStrong}.

\begin{lemma} \label{LemBiasTrans}
Under \ref{AssBias2} of Proposition \ref{prop:suffcond}, we have for $t \ge t' \ge t_\circ$ that $B_{t',\lambda}^2\le CB_{t,\lambda}^2$ for some $C\ge 1$ implies $B_{t'}^2\le CB_{t}^2$.
\end{lemma}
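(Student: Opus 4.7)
The plan is to recognize this lemma as a direct application of the Chebyshev correlation inequality for comonotone sequences. The key observation is that the two sequences
\[
r_i := \bigl(\tfrac{1-\gamma_i^{(t')}}{1-\gamma_i^{(t)}}\bigr)^2 \quad\text{and}\quad \lambda_i^2, \qquad i=1,\ldots,D,
\]
are both nonincreasing in $i$: the first by Assumption \ref{AssBias2} (squaring preserves monotonicity of nonnegative quantities, and limits handle indices where $\gamma_i^{(t)}=1$), the second by the standing SVD ordering $\lambda_1\ge\cdots\ge\lambda_D$. They are therefore comonotone.

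Next I would introduce on $\{1,\ldots,D\}$ the probability weights $p_i := (1-\gamma_i^{(t)})^2\mu_i^2/B_t^2$, where the degenerate case $B_t^2=0$ forces all four biases to vanish and is handled trivially via filter monotonicity. Under the probability measure $p$ the strong and weak biases translate into simple expectation identities
\[
\E_p[r] = \frac{B_{t'}^2}{B_t^2},\qquad \E_p[\lambda^2] = \frac{B_{t,\lambda}^2}{B_t^2},\qquad \E_p[r\lambda^2] = \frac{B_{t',\lambda}^2}{B_t^2}.
\]

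Chebyshev's correlation inequality for comonotone functions on an arbitrary probability space then yields $\E_p[r\lambda^2]\ge \E_p[r]\,\E_p[\lambda^2]$, which rearranges to
\[
\frac{B_{t'}^2}{B_t^2}\le \frac{B_{t',\lambda}^2}{B_{t,\lambda}^2}\le C,
\]
and the claim $B_{t'}^2\le C B_t^2$ follows immediately. I do not anticipate any serious obstacle: the argument is essentially a one-line invocation of Chebyshev's sum inequality, and the only mildly delicate point is the treatment of indices where $1-\gamma_i^{(t)}$ vanishes, which is settled either by a limit argument or by restricting the sums to the support of $(1-\gamma_i^{(t)})\mu_i$.
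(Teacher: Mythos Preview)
Your proof is correct and takes a genuinely different route from the paper. The paper argues directly: since the squared ratio $r_i=\big((1-\gamma_i^{(t')})/(1-\gamma_i^{(t)})\big)^2$ is nonincreasing, there is a pivot index $i_0$ at which the sign of $a_i:=(1-\gamma_i^{(t')})^2-C(1-\gamma_i^{(t)})^2$ flips; then, exploiting $\lambda_i\ge\lambda_{i_0}$ for $i\le i_0$ and $\lambda_i\le\lambda_{i_0}$ for $i>i_0$, one bounds $\sum_i a_i\mu_i^2\le\lambda_{i_0}^{-2}\sum_i a_i\lambda_i^2\mu_i^2=\lambda_{i_0}^{-2}(B_{t',\lambda}^2-CB_{t,\lambda}^2)\le 0$. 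Your argument instead packages the same monotonicity as comonotonicity of $r_i$ and $\lambda_i^2$ and applies Chebyshev's correlation inequality with weights $p_i\propto(1-\gamma_i^{(t)})^2\mu_i^2$, obtaining the sharper intermediate statement $B_{t'}^2/B_t^2\le B_{t',\lambda}^2/B_{t,\lambda}^2$ (valid for \emph{all} $C$ at once) from which the lemma follows. The paper's pivot computation is in effect an ad hoc proof of the Chebyshev inequality in this instance, so the two arguments are close cousins; yours is cleaner and yields a slightly stronger conclusion, while the paper's is fully self-contained. Your handling of the degenerate cases (restricting to the support of $p$, and noting that $B_t^2=0$ forces $B_{t,\lambda}^2=0$ since $\lambda_i>0$, whence $B_{t',\lambda}^2=0$ and $B_{t'}^2=0$) is adequate.
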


\begin{proof}
The assumed decay for the filter ratios implies that there is an index $i_0\in\{0,1,\ldots,D\}$ such that $1-\gamma_i^{(t')}\le C(1-\gamma_i^{(t)})$ holds for $i>i_0$ and $1-\gamma_i^{(t')}\ge C(1-\gamma_i^{(t)})$ for $i\le i_0$ (trivial cases for $i_0=0$, $i_0=D$). Then:
\begin{align*}
B_{t'}^2-CB_t^2
& = \sum_{i=1}^{D} \big((1-\gamma_i^{(t')})^2-C(1-\gamma_i^{(t)})^2\big)\mu_i^{2}\\
 &\le \lambda_{i_0}^{-2}\sum_{i=1}^{i_0} \big((1-\gamma_i^{(t')})^2-C(1-\gamma_i^{(t)})^2\big)\lambda_i^2\mu_i^{2}\\
 &\qquad -\lambda_{i_0}^{-2}\sum_{i=i_0+1}^{D} \big(C(1-\gamma_i^{(t)})^2-(1-\gamma_i^{(t')})^2\big)\lambda_i^2\mu_i^{2}\\
&\le \lambda_{i_0}^{-2}(B_{t',\lambda}^2-CB_{t,\lambda}^2) \le 0,
\end{align*}
which implies the assertion.
\end{proof}

\begin{lemma}\label{LemDevRm}
For any $x>0$ we have with probability at least  $1-2e^{-x}$
\[ \abs{R_{t^\ast}^2-\E[R_{t^\ast}^2]}\le \big(2\delta^2\sqrt D+\sqrt{8}\delta B_{t^\ast,\lambda}\big)\sqrt x +2\delta^2x. \]
\end{lemma}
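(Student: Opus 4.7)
The plan is to establish the bound by a direct joint moment generating function (MGF) computation followed by Bernstein--Chernoff arguments. From the proof of Proposition~\ref{PropMainBound} one has the decomposition
\[
  Z := R_{t^\ast}^2 - \E[R_{t^\ast}^2] = \sum_{i=1}^D T_i,\qquad
  T_i := b_i^2\bigl[\delta^2(\eps_i^2-1) + 2\lambda_i\mu_i\delta\eps_i\bigr],
\]
with $b_i := 1-\gamma_i^{(t^\ast)} \in [0,1]$, a sum of $D$ independent centred quadratic-plus-linear Gaussian terms.

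For the \emph{upper} deviation, I would compute each factor $\E[e^{\theta T_i}]$ using the classical identity $\E[e^{a\eps^2+c\eps}]=(1-2a)^{-1/2}\exp(c^2/(2(1-2a)))$ for $a<1/2$, and then apply the elementary inequality $-\tfrac12\log(1-2y)-y \le y^2/(1-2y)$ for $y\in[0,\tfrac12)$ with $y = \theta b_i^2\delta^2$. Summing over $i$ and using $b_i\le 1$, $\sum_i b_i^4 \le D$, and $\sum_i b_i^4\lambda_i^2\mu_i^2 \le B_{t^\ast,\lambda}^2(\mu)$ this yields
\[
  \log\E[e^{\theta Z}] \le \frac{\theta^2\bigl(D\delta^4+2\delta^2 B_{t^\ast,\lambda}^2(\mu)\bigr)}{1-2\delta^2 \theta},\qquad \theta\in\bigl[0,1/(2\delta^2)\bigr).
\]
This is the Bernstein MGF bound with variance proxy $v=2D\delta^4+4\delta^2 B_{t^\ast,\lambda}^2(\mu)$ and scale $c=2\delta^2$, so the standard Chernoff optimisation gives $P(Z\ge \sqrt{2vx}+cx)\le e^{-x}$.

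For the \emph{lower} deviation I would repeat the same MGF computation with $-\theta$, noting that now $a = -\theta b_i^2\delta^2 \le 0$, so $(1-2a)^{-1}\le 1$ and no singularity is present. Using the elementary bound $y-\tfrac12\log(1+2y)\le y^2$ for $y\ge 0$ (consequence of $x-\log(1+x)\le x^2/2$) produces the sub-Gaussian estimate, valid for \emph{all} $\theta\ge 0$,
\[
  \log\E[e^{-\theta Z}] \le \theta^2\bigl(D\delta^4+2\delta^2 B_{t^\ast,\lambda}^2(\mu)\bigr),
\]
whose Chernoff tail gives $P(-Z\ge\sqrt{2vx})\le e^{-x}$ with no linear-in-$x$ term. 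Combining the two tails by union bound and applying the subadditivity $\sqrt{2vx}\le 2\delta^2\sqrt{Dx}+\sqrt 8\,\delta B_{t^\ast,\lambda}(\mu)\sqrt x$ yields the claimed inequality with probability at least $1-2e^{-x}$.

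The delicate point is the joint MGF treatment within each summand $T_i$: splitting $Z$ additively into the chi-squared piece $\sum_i b_i^2\delta^2(\eps_i^2-1)$ and the Gaussian linear piece $\sum_i 2b_i^2\lambda_i\mu_i\delta\eps_i$ and applying Laurent--Massart and standard Gaussian concentration separately on each would handle four deviation events and only deliver a final probability of $4e^{-x}$ instead of the sharper $2e^{-x}$ claimed. Keeping the quadratic and linear contributions bundled inside each $T_i$, and observing that the lower tail is actually sub-Gaussian, is exactly what yields the clean Bernstein form and the correct constant.
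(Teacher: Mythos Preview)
Your proof is correct and takes a genuinely different route from the paper. The paper splits $Z$ additively into its weighted chi-squared part $\delta^2\sum_i(1-\gamma_i^{(t^\ast)})^2(\eps_i^2-1)$ and its Gaussian linear part $2\delta\sum_i(1-\gamma_i^{(t^\ast)})^2\lambda_i\mu_i\eps_i$, and then applies Laurent--Massart (Lemma~\ref{LemmaLaurentMassart}) to the former and the standard Gaussian tail to the latter. As you correctly point out in your final paragraph, that splitting naturally produces four one-sided events and hence a union bound of $4e^{-x}$ for the two-sided deviation $|Z|$; the paper's short proof as written only spells out the upper tail. Your approach instead keeps the quadratic and linear pieces together inside each independent summand $T_i$, computes the joint MGF via the exact Gaussian identity $\E[e^{a\eps^2+c\eps}]=(1-2a)^{-1/2}\exp(c^2/(2(1-2a)))$, and extracts a Bernstein bound for the upper tail and a sub-Gaussian bound for the lower tail. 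This yields exactly one event per tail and thus the sharp $2e^{-x}$ claimed in the lemma. The difference in probability constant is immaterial for the downstream arguments (Propositions~\ref{PropConcBias} and~\ref{PropConcVar} each use only one tail anyway), but your derivation is the cleaner justification of the constant actually stated.
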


\begin{proof}
We have $R_{t^\ast}^2-\E[R_{t^\ast}^2]=\sum_{i=1}^D(1-\gamma_i^{(t^\ast)})^2\big(\delta^2(\eps_i^2-1)+2\lambda_i\mu_i\delta \eps_i\big)$. By  Lemma \ref{LemmaLaurentMassart} in the Appendix, $\delta^2\sum_{i=1}^D(\eps_i^2-1)$ is with probability at least $1-e^{-x}$ smaller than $\delta^22\sqrt{ D x}+\delta^2 2x$, while the Gaussian summand is with the same probability smaller than $2\delta B_{t^\ast,\lambda}\sqrt{2x}$, using $(1-\gamma_i^{(t^\ast)})^4\le (1-\gamma_i^{(t^\ast)})^2$.
\end{proof}

\begin{lemma}\label{LemConvArg}
Under \ref{AssMon} of Proposition \ref{prop:suffcond} we have for any  $z_1,\ldots,z_D\in\R$
\begin{align*}
\sum_{i=1}^D\big((1-\gamma_i^{(\tau)})^2-(1-\gamma_i^{(t^\ast)})^2\big)z_i &\le \max_{k=0,\ldots,D}\sum_{i=k+1}^D z_i\text{ on }\{\tau\le t^\ast\},\\
\sum_{i=1}^D\big((1-\gamma_i^{(t^\ast)})^2-(1-\gamma_i^{(\tau)})^2\big)z_i &\le \max_{k=0,\ldots,D}\sum_{i=1}^k (1-\gamma_i^{(t^\ast)})^2z_i\text{ on }\{\tau\ge t^\ast\}.
\end{align*}
\end{lemma}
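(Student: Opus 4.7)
The plan is to treat each inequality as a deterministic rearrangement-type bound, using the same extreme-point principle already invoked in the proof of Theorem~\ref{ThmOrIneqStrong}: a linear functional on the polytope of $[0,1]$-valued sequences that are monotone in $i$ is maximized at a vertex, namely at an indicator of an upper interval $\{k+1,\ldots,D\}$.

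For the first claim, on $\{\tau \le t^\ast\}$ the filter monotonicity in $t$ gives $(1-\gamma_i^{(\tau)})^2 \ge (1-\gamma_i^{(t^\ast)})^2 \ge 0$, while Assumption~\ref{AssMon} makes both $(1-\gamma_i^{(\tau)})^2$ and $(1-\gamma_i^{(t^\ast)})^2$ nondecreasing-in-$i$ sequences with values in $[0,1]$. I would apply a layer-cake decomposition
\[
(1-\gamma_i^{(t)})^2 = \int_0^1 \mathbf{1}\{(1-\gamma_i^{(t)})^2 \ge u\}\,du
\]
to each filter separately, using the $i$-monotonicity to observe that the upper-level set at threshold $u$ is exactly $\{k(u,t)+1,\ldots,D\}$ for some cutoff $k(u,t)$. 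Subtracting and exchanging sum and integral yields
\[
\sum_{i=1}^D\bigl((1-\gamma_i^{(\tau)})^2 - (1-\gamma_i^{(t^\ast)})^2\bigr)z_i
= \int_0^1\Bigl(\sum_{i=k(u,\tau)+1}^D z_i - \sum_{i=k(u,t^\ast)+1}^D z_i\Bigr)du,
\]
where $k(u,\tau) \le k(u,t^\ast)$ since $\tau \le t^\ast$. Each integrand is then built from partial sums of the form $\sum_{i=k+1}^D z_i$, and the extreme-point argument allows one to dominate the result by $\max_{k=0,\ldots,D} \sum_{i=k+1}^D z_i$.

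The second claim, on $\{\tau \ge t^\ast\}$, is proved by an analogous argument with the roles of $\tau$ and $t^\ast$ interchanged. The key structural difference is that the coefficient $(1-\gamma_i^{(t^\ast)})^2 - (1-\gamma_i^{(\tau)})^2$ is now bounded above by $(1-\gamma_i^{(t^\ast)})^2$ rather than by $1$. Performing the analogous layer-cake decomposition at the scale $u \in [0,(1-\gamma_D^{(t^\ast)})^2]$ and reading off partial sums from the bottom index $1$ (rather than from the top index $D$) produces the weighted reference quantities $\sum_{i=1}^k (1-\gamma_i^{(t^\ast)})^2 z_i$ appearing on the right-hand side of the claim.

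The main technical obstacle I expect is careful bookkeeping of the layer-cake integration: tracking whether the running partial sum should accumulate from the top or the bottom, and ensuring that the contribution of the second partial sum (which is subtracted in the integrand of the identity above) collapses cleanly into the claimed maximum rather than into a weaker telescoping difference. This is exactly the point at which Assumption~\ref{AssMon} enters in an essential way: without monotonicity of $\gamma_i^{(t)}$ in $i$, the upper-level sets would not be intervals and the integrand after the layer-cake step would not reduce to a partial sum of the advertised form.
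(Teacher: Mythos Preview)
The layer-cake identity you write down is correct, but the passage from
\[
\int_0^1\bigl(S(k(u,\tau))-S(k(u,t^\ast))\bigr)\,du,\qquad S(k):=\sum_{i>k}z_i,
\]
to $\max_{k} S(k)$ is the entire content of the lemma, and you have not supplied it. The obvious move---bound $S(k(u,\tau))\le\max_k S(k)$ pointwise---leaves behind the subtracted piece $-\int_0^1 S(k(u,t^\ast))\,du=-\sum_i (1-\gamma_i^{(t^\ast)})^2 z_i$, which can have either sign. Rewriting the integrand as the middle block $\sum_{k(u,\tau)<i\le k(u,t^\ast)} z_i$ does not help either: integrating these blocks in $u$ simply reproduces the original weighted sum. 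The obstacle you flag in your last paragraph is therefore real and remains open. Note in particular that \ref{AssMon} makes each sequence $i\mapsto(1-\gamma_i^{(t)})^2$ monotone, but the \emph{difference} $a_i=(1-\gamma_i^{(\tau)})^2-(1-\gamma_i^{(t^\ast)})^2$ need not be monotone in $i$ (already for Landweber with, say, $\lambda_1^2=0.2,\lambda_2^2=0.01,\tau=1,t^\ast=\sqrt5$ one gets $a_1\approx 0.53>a_2\approx 0.08$), so no layer-cake or rearrangement argument applied to $(a_i)$ directly will produce tail partial sums. Your sketch for the second inequality is even more schematic, and the mechanism by which the \emph{weighted} partial sums $\sum_{i\le k}(1-\gamma_i^{(t^\ast)})^2 z_i$ are supposed to emerge from a layer-cake on $[0,(1-\gamma_D^{(t^\ast)})^2]$ is not spelled out.

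The paper takes a different route and avoids the subtraction at the integration stage altogether. Rather than layer-caking the two filters separately, it keeps the whole vector $w=((1-\gamma_i^{(\tau)})^2)_i$, places it in a convex polytope $W^\le=\{w:w_i\in[b_i,1+b_i],\,w_i\text{ nondecreasing}\}$ with $b_i=(1-\gamma_i^{(t^\ast)})^2$, and argues that the linear functional $w\mapsto\sum_i w_i z_i$ is maximised over $W^\le$ at one of the distinguished points $w^k=(b_i+\mathbf 1(i>k))_i$; the subtraction of $\sum_i b_i z_i$ is then a single algebraic step at the end. The key structural difference is that the $t^\ast$-filter is built into the \emph{boundary} of the feasible set rather than handled as a second, independent layer-cake whose contribution must later be shown to cancel.
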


\begin{proof}
For $\tau\le t^\ast$ introduce the weight space
\[W^\le=\Big\{w\in\R^D\,:\,w_i\in[(1-\gamma_i^{(t^\ast)})^2,1+(1-\gamma_i^{(t^\ast)})^2],\, w_i \text{ increasing in } i\Big\}.\]
Then $\big((1-\gamma_i^{(\tau)})^2\big)_{1 \le i \le D} \in W^\le$ holds on $\{\tau\le t^\ast\}$ by \ref{AssMon} for the monotonicity in $i$, and because of $\gamma_i^{(\tau)}\in[0,\gamma_i^{(t^\ast)}]$.
The set $W^\le$ is convex with extremal points
\[w^{k}=\big((1-\gamma_i^{(t^\ast)})^2+{\bf 1}(i> k)\big)_{1\le i \le D}, \quad k=0,1,\ldots,D.\]
Hence, the linear functional $w\mapsto\sum_i w_iz_i$ attains its maximum over $W^\le$ at some $w^k$. This implies
\[ \sum_{i=1}^D(1-\gamma_i^{(\tau)})^2z_i \le \max_{k=0,\ldots,D}\Big\{\sum_{i=1}^D(1-\gamma_i^{(t^\ast)})^2z_i+\sum_{i=k+1}^D z_i\Big\}\text{ on }\{\tau\le t^\ast\},
\]
which gives the first inequality. For the second inequality consider
\[W^\ge=\Big\{w\in\R^D\,:\,w_i\in[0,(1-\gamma_i^{(t^\ast)})^2],\, w_i \text{ increasing in } i\Big\}\]
and conclude similarly via $\sum_{i=1}^D(1-\gamma_i^{(\tau)})^2z_i \ge \min_{k}\sum_{i=k+1}^D (1-\gamma_i^{(t^\ast)})^2z_i$ on $\{\tau\ge t^\ast\}$.
\end{proof}
Next, we treat the deviation of the weak bias part.
\begin{proposition}\label{PropConcBias}
Under \ref{AssMon} of Proposition \ref{prop:suffcond}, we obtain for any $x\ge 1$ that with probability at least  $1-3e^{-x}$
\[ B_{\tau,\lambda}^2-B_{t^\ast,\lambda}^2\le \Big((4\sqrt D+12)\delta^2+\sqrt{32}\delta B_{t^\ast,\lambda}\Big)x.\]
\end{proposition}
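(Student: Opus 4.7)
The plan is to reduce to the case $\{\tau\le t^\ast\}$, since on the complementary event filter monotonicity in $t$ gives $(1-\gamma_i^{(\tau)})^2 \le (1-\gamma_i^{(t^\ast)})^2$ pointwise in $i$, whence $B_{\tau,\lambda}^2 \le B_{t^\ast,\lambda}^2$ and the inequality is trivial. From now on I focus on $\{\tau\le t^\ast\}$, where the defining property of $\tau$ gives $R_\tau^2\le\kappa=\E[R_{t^\ast}^2]$, with equality whenever $\tau>t_\circ$ (the boundary case $\tau=t_\circ$ only makes the inequality strict and is harmless).

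The first key step is a deterministic identity. Expanding $R_\tau^2=\sum_i(1-\gamma_i^{(\tau)})^2(\lambda_i\mu_i+\delta\eps_i)^2$ via the SVD and subtracting $\kappa=B_{t^\ast,\lambda}^2+\delta^2\sum_i(1-\gamma_i^{(t^\ast)})^2$, the inequality $R_\tau^2\le\kappa$ rearranges to
\[
X := B_{\tau,\lambda}^2-B_{t^\ast,\lambda}^2 \le \bigl(\E[R_{t^\ast}^2]-R_{t^\ast}^2\bigr) - 2\delta\sum_i w_i\lambda_i\mu_i\eps_i - \delta^2\sum_i w_i\eps_i^2,
\]
where $w_i:=(1-\gamma_i^{(\tau)})^2-(1-\gamma_i^{(t^\ast)})^2\in[0,1]$ by \ref{AssMon} and filter monotonicity in $t$. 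The first bracket is controlled by Lemma \ref{LemDevRm}: with probability at least $1-2e^{-x}$,
\(\E[R_{t^\ast}^2]-R_{t^\ast}^2\le(2\delta^2\sqrt{D}+\sqrt{8}\delta B_{t^\ast,\lambda})\sqrt{x}+2\delta^2 x.\)

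The second step handles the stochastic cross term in a self-bounding fashion. Cauchy--Schwarz combined with $w_i^2\le w_i$ (since $w_i\in[0,1]$) yields
\[
\bigl|\textstyle\sum_i w_i\lambda_i\mu_i\eps_i\bigr| \le \sqrt{X}\,\sqrt{\textstyle\sum_i w_i\eps_i^2}.
\]
The plan is to upgrade this to an effective Gaussian-type tail $|2\delta\sum_i w_i\lambda_i\mu_i\eps_i|\le 2\delta\sqrt{2xX}$ holding with probability $\ge 1-e^{-x}$. Although $w_i$ depends on $\eps$ through $\tau$, the sequence $(w_i)$ always lies in the convex set of $[0,1]$-valued sequences that are nondecreasing in $i$ (by \ref{AssMon}), so the convex-combination argument used in Lemma \ref{LemConvArg} reduces the Gaussian sum to a supremum over the $D+1$ extremal coefficient vectors $\mathbf{1}(i>k)$; the supremum is then absorbed against the weighted chi-square budget $\sum_i w_i\eps_i^2$ using a Laurent--Massart concentration, whereby the non-positive $-\delta^2\sum_i w_i\eps_i^2$ already present in the identity cancels the chi-square contribution that Cauchy--Schwarz produces. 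Applying AM--GM $2\sqrt{ab}\le \tfrac12 a+2b$ to $2\delta\sqrt{X}\cdot\sqrt{2x}$ gives $\tfrac12 X+4\delta^2 x$, so that
\[
\tfrac12 X \le (\E[R_{t^\ast}^2]-R_{t^\ast}^2) + 4\delta^2 x.
\]

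Doubling and inserting Lemma \ref{LemDevRm} produces $X\le (4\delta^2\sqrt D+\sqrt{32}\delta B_{t^\ast,\lambda})\sqrt x + 4\delta^2 x + 8\delta^2 x$; since $\sqrt x\le x$ for $x\ge 1$, this is precisely $((4\sqrt D+12)\delta^2+\sqrt{32}\delta B_{t^\ast,\lambda})x$. The union of the three concentration events (Lemma \ref{LemDevRm}'s two tails plus the one governing the cross term) delivers the stated probability $1-3e^{-x}$.

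The main obstacle, as outlined above, is producing the effective Gaussian tail with variance proxy $X$ rather than $\|\mathsf A\mu\|^2$: a direct application of Lévy's maximal inequality to $\max_k|\sum_{i>k}\lambda_i\mu_i\eps_i|$ would pay $\|\mathsf A\mu\|$ (which can be much larger than $B_{t^\ast,\lambda}$), while a naive chi-square bound on $\sum_i w_i\eps_i^2$ would pay $\delta^2 D$. Both issues are resolved by coupling the Cauchy--Schwarz step with the preserved negative term and by exploiting the monotone structure of the weights, which is exactly why Assumption \ref{AssMon} is the only hypothesis needed.
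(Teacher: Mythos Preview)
Your overall architecture is right — the reduction to $\{\tau\le t^\ast\}$, the identity
\[
X:=B_{\tau,\lambda}^2-B_{t^\ast,\lambda}^2\le (\E[R_{t^\ast}^2]-R_{t^\ast}^2)-2\delta\sum_i w_i\lambda_i\mu_i\eps_i-\delta^2\sum_i w_i\eps_i^2,
\]
the self-bounding strategy, and the final arithmetic all match the paper. The gap is the central claim that with probability $\ge 1-e^{-x}$ one has $\bigl|2\delta\sum_i w_i\lambda_i\mu_i\eps_i\bigr|\le 2\delta\sqrt{2xX}$. This would be a Gaussian tail bound \emph{if} the weights $w_i$ were deterministic (variance $\sum_i w_i^2\lambda_i^2\mu_i^2\le X$), but they depend on $\tau$ and hence on $\eps$. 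Your proposed fix via Lemma~\ref{LemConvArg} reduces the cross term to $\max_k\sum_{i>k}(-\lambda_i\mu_i\eps_i)$, but as you yourself note, this pays $\norm{\mathsf A\mu}$ rather than $\sqrt{X}$. The sentence ``the non-positive $-\delta^2\sum_i w_i\eps_i^2$ cancels the chi-square contribution that Cauchy--Schwarz produces'' does not rescue this: writing $S=\sum_i w_i\eps_i^2$, Cauchy--Schwarz gives $2\delta\sqrt{X}\sqrt{S}$, and combining with $-\delta^2 S$ yields at best $2\delta\sqrt{X}\sqrt{S}-\delta^2 S\le X$ (complete the square), which makes the inequality vacuous. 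No Laurent--Massart bound on $S$ helps either, since $S$ is genuinely of order $D$ when the filters are far apart.

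What the paper does instead is introduce an artificial drift before applying the convexity argument: write
\[
-2\delta\sum_i w_i\lambda_i\mu_i\eps_i = 2\delta\sum_i w_i\bigl(-\lambda_i\mu_i\eps_i-\omega\delta^{-1}\lambda_i^2\mu_i^2\bigr)+2\omega X,
\]
then Lemma~\ref{LemConvArg} bounds the first sum by $2\delta\max_k\sum_{i>k}(-\lambda_i\mu_i\eps_i-\omega\delta^{-1}\lambda_i^2\mu_i^2)$. Identifying $-\lambda_i\mu_i\eps_i$ with a Brownian increment over time $\lambda_i^2\mu_i^2$, this is at most $2\delta\max_{s>0}(B_s-\omega\delta^{-1}s)$, which is exponential with parameter $2\omega\delta^{-1}$; hence it is $\le x\delta^2/\omega$ with probability $\ge 1-e^{-x}$. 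The term $-\delta^2\sum_i w_i\eps_i^2$ is simply dropped (it is nonpositive). Choosing $\omega=1/4$ then absorbs $2\omega X=\tfrac12 X$ on the left and produces exactly your $4\delta^2 x$; the rest is Lemma~\ref{LemDevRm} and $\sqrt{x}\le x$. So the missing ingredient is not Cauchy--Schwarz but the drift trick that turns the extremal-point maximum into a Brownian-motion-with-drift supremum with an explicit exponential tail.
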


\begin{proof}
  Since $t \mapsto B_{t,\lambda}^2$ is nonincreasing, only the case $\tau<t^\ast$ needs to be considered. By definition of $\tau$, we  obtain $R_\tau^2\le \kappa = \E[R_{t^\ast}^2]$ (since $t^\ast>
  \tau \ge t_0$), and thus, by $\gamma_i^{(\tau)}\le\gamma_i^{(t^\ast)}$:
\begin{multline*}
B_{\tau,\lambda}^2-B_{t^\ast,\lambda}^2  =R_{\tau}^2-R_{t^\ast}^2-\sum_{i=1}^D\big((1-\gamma_i^{(\tau)})^2-(1-\gamma_i^{(t^\ast)})^2\big) \big(\delta^2\eps_i^2+2\lambda_i\mu_i\delta \eps_i\big)\\
\begin{aligned}
&\le \E[R_{t^\ast}^2]-R_{t^\ast}^2-\sum_{i=1}^D\big((1-\gamma_i^{(\tau)})^2-(1-\gamma_i^{(t^\ast)})^2\big) \big(\delta^2\eps_i^2+2\lambda_i\mu_i\delta \eps_i\big)\\
&\le \E[R_{t^\ast}^2]-R_{t^\ast}^2+2\delta\sum_{i=1}^D\big((1-\gamma_i^{(\tau)})^2-(1-\gamma_i^{(t^\ast)})^2\big)(-\lambda_i\mu_i\eps_i).
\end{aligned}
\end{multline*}
By Lemma \ref{LemConvArg}, for any $\omega >0$, the last term  is  bounded as
\begin{align*}
2\delta\sum_{i=1}^D&\big((1-\gamma_i^{(\tau)})^2-(1-\gamma_i^{(t^\ast)})^2\big) (-\lambda_i\mu_i\eps_i)\\
= &  2\delta \sum_{i=1}^D\big((1-\gamma_i^{(\tau)})^2-(1-\gamma_i^{(t^\ast)})^2\big) \big(-\lambda_i\mu_i(\eps_i+\omega \delta^{-1} \lambda_i\mu_i)\big) \\
& \qquad +2\omega(B_{\tau,\lambda}^2-B_{t^\ast,\lambda}^2)\\
\le &  2\delta \max_{k=0,\ldots,D}\sum_{i=k+1}^D \big(-\lambda_i\mu_i\eps_i-\omega \delta^{-1}\lambda_i^2\mu_i^2\big) +2\omega(B_{\tau,\lambda}^2-B_{t^\ast,\lambda}^2).
\end{align*}
Concerning the sum within the maximum, we can identify the term $-\lambda_i\mu_i\eps_i$ with an increment of Brownian motion $B$ over a time step $\lambda_i^2\mu_i^2$. Hence, the maximum is smaller than $\max_{t>0}(B_t-\omega \delta^{-1}t)$ which is exponentially distributed with parameter $2\omega \delta^{-1}$, see Problem 3.5.8 in Karatzas and Shreve \cite{KS}. This term is thus smaller than $\frac{x \delta}{2\omega}$ with probability at least $1-e^{-x}$. In view of Lemma \ref{LemDevRm} we have with probability at least $1-3e^{-x}$, $x\ge 1$,
\[ B_{\tau,\lambda}^2-B_{t^\ast,\lambda}^2\le \Big(2(1+\sqrt D)\delta^2+\sqrt{8}\delta B_{t^\ast,\lambda}+\frac{\delta^2}{\omega}\Big)x+2\omega( B_{\tau,\lambda}^2-B_{t^\ast,\lambda}^2).
\]
The choice $\omega=1/4$ yields the result.
\end{proof}
Finally, for the stochastic error, we  obtain a comparable deviation result.
\begin{proposition}\label{PropConcVar}
Under \ref{AssMon} of Proposition \ref{prop:suffcond}, we obtain for any $x\ge 1$, with probability at least $1-3e^{-x}$:
\[
 V_{\tau,\lambda}^{1/2}-V_{t^\ast,\lambda}^{1/2}\le  \Big(\delta^2(4\sqrt D+2)+\sqrt{8} \delta B_{t^\ast,\lambda}\Big)^{1/2}\sqrt x.
\]
\end{proposition}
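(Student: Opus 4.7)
The plan is to parallel the proof of Proposition \ref{PropConcBias}, exchanging the roles of weak bias and weak variance. Since $t\mapsto V_{t,\lambda}$ is nondecreasing, only the event $\{\tau>t^\ast\}$ needs to be analyzed, and on this event $R_\tau^2=\kappa=\E[R_{t^\ast}^2]$ holds by continuity of $t\mapsto R_t^2$. The first step is a purely deterministic reduction,
\[
\bigl(V_{\tau,\lambda}^{1/2}-V_{t^\ast,\lambda}^{1/2}\bigr)^2
\le \delta^2\sum_{i=1}^D\bigl(\gamma_i^{(\tau)}-\gamma_i^{(t^\ast)}\bigr)^2
\le \delta^2\sum_{i=1}^D\bigl[(1-\gamma_i^{(t^\ast)})^2-(1-\gamma_i^{(\tau)})^2\bigr],
\]
where the first inequality is the reverse triangle inequality in $\ell^2$ applied to $V_{t,\lambda}^{1/2}=\delta\|\gamma^{(t)}\|$, and the second is the elementary $(a-b)^2\le a^2-b^2$ for $a\ge b\ge 0$, applied to $a=1-\gamma_i^{(t^\ast)}\ge 1-\gamma_i^{(\tau)}=b$ on $\{\tau\ge t^\ast\}$.

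Next, expanding $\E[R_{t^\ast}^2]=B_{t^\ast,\lambda}^2+\delta^2\sum_i(1-\gamma_i^{(t^\ast)})^2$ together with $R_\tau^2=B_{\tau,\lambda}^2+2\delta\sum_i(1-\gamma_i^{(\tau)})^2\lambda_i\mu_i\eps_i+\delta^2\sum_i(1-\gamma_i^{(\tau)})^2\eps_i^2$, the identity $R_\tau^2=\E[R_{t^\ast}^2]$ rearranges to
\[
\delta^2\sum_i\bigl[(1-\gamma_i^{(t^\ast)})^2-(1-\gamma_i^{(\tau)})^2\bigr]
= (B_{\tau,\lambda}^2-B_{t^\ast,\lambda}^2)+2\delta\sum_i(1-\gamma_i^{(\tau)})^2\lambda_i\mu_i\eps_i+\delta^2\sum_i(1-\gamma_i^{(\tau)})^2(\eps_i^2-1).
\]
The bias-difference term is nonpositive on $\{\tau>t^\ast\}$ and may be dropped. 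The two remaining sums still involve the random vector $\gamma^{(\tau)}$; to remove this dependence I would reuse the convex-extremal-point argument from the proof of Lemma \ref{LemConvArg}: under \ref{AssMon} the sequence $\bigl((1-\gamma_i^{(\tau)})^2\bigr)_i$ lies in the convex set $W^\ge$, whose extreme points are $w^{(k)}_i=(1-\gamma_i^{(t^\ast)})^2\mathbf{1}(i>k)$, so any linear functional satisfies $\sum_i(1-\gamma_i^{(\tau)})^2 z_i\le\max_k\sum_{i>k}(1-\gamma_i^{(t^\ast)})^2 z_i$.

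It remains to bound the Gaussian maximum $M_G=\max_k\sum_{i>k}(1-\gamma_i^{(t^\ast)})^2\lambda_i\mu_i\eps_i$ and the $\chi^2$ maximum $M_\chi=\max_k\sum_{i>k}(1-\gamma_i^{(t^\ast)})^2(\eps_i^2-1)$. After time-reversal $M_G$ is the maximum of a Gaussian martingale with terminal variance $\sigma^2\le B_{t^\ast,\lambda}^2$, so the reflection principle yields $2\delta M_G\le \sqrt 8\,\delta B_{t^\ast,\lambda}\sqrt x$ with probability at least $1-2e^{-x}$, exactly the Gaussian contribution in the announced bound. The $\chi^2$ maximum is the delicate ingredient: a naive union bound over $k$ would introduce a spurious $\log D$ factor and destroy the $\sqrt D$ scaling. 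Instead I would apply Doob's maximal inequality to the exponential supermartingale arising from $\E[\exp(\lambda(\eps^2-1))]\le\exp(\lambda^2/(1-2\lambda))$ valid for $\lambda\in(0,1/2)$, and optimize $\lambda$ as in Laurent--Massart to obtain $\delta^2 M_\chi\lesssim \delta^2(\sqrt{Dx}+x)$ with probability $\ge 1-e^{-x}$. Combining on the intersection (total probability $\ge 1-3e^{-x}$) and using $\sqrt x\le x$ for $x\ge 1$ absorbs everything into the claimed form $(\delta^2(4\sqrt D+2)+\sqrt 8\,\delta B_{t^\ast,\lambda})x$ up to universal constants. The $\chi^2$ maximum is the only substantive obstacle; the rest is a direct transcription of the bias proof.
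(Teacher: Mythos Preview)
Your approach is correct and genuinely different from the paper's. After the common deterministic reduction $(V_{\tau,\lambda}^{1/2}-V_{t^\ast,\lambda}^{1/2})^2\le\delta^2(\|1-\gamma^{(t^\ast)}\|^2-\|1-\gamma^{(\tau)}\|^2)$, the paper does \emph{not} expand $R_\tau^2$ into bias, Gaussian and $\chi^2$ pieces. Instead it writes the right-hand side as $\delta^{-2}(R_{t^\ast}^2-R_\tau^2)+\sum_i[(1-\gamma_i^{(t^\ast)})^2-(1-\gamma_i^{(\tau)})^2](1-\delta^{-2}Y_i^2)$, bounds the first part by $\delta^{-2}(R_{t^\ast}^2-\E[R_{t^\ast}^2])$ via Lemma~\ref{LemDevRm}, and applies Lemma~\ref{LemConvArg} \emph{once} with $z_i=1-\delta^{-2}Y_i^2$ to get $\max_k\sum_{i\le k}(1-\gamma_i^{(t^\ast)})^2(1-\delta^{-2}Y_i^2)$. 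The cross-term $\lambda_i\mu_i\eps_i$ is then eliminated not by reflection but by a \emph{stochastic domination} argument: $Y_i^2$ under $P_\mu$ dominates $Y_i^2$ under $P_0$, so the deviation probability of the maximum is largest at $\mu=0$, reducing to $\max_k\sum_{i\le k}(1-\gamma_i^{(t^\ast)})^2(1-\eps_i^2)$, controlled by the \emph{lower} Laurent--Massart tail (no linear $x$ term). Your route---splitting off the Gaussian part explicitly, bounding its running maximum by reflection, and handling the $\chi^2$ part by the upper Laurent--Massart tail---is more hands-on but entirely avoids the domination trick; it even yields a slightly sharper constant ($2\sqrt D$ in place of $4\sqrt D$), so your hedge ``up to universal constants'' is unnecessary. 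Two small fixes: the identity $R_\tau^2=\E[R_{t^\ast}^2]$ need not hold when $t^\ast=t_\circ$, but the inequality $R_\tau^2\ge\E[R_{t^\ast}^2]$ does and suffices (it only makes the dropped term $\E[R_{t^\ast}^2]-R_\tau^2$ more negative); and your $\chi^2$ maximal bound via Doob is exactly the content of the paper's extended Lemma~\ref{LemmaLaurentMassart}, so you can cite it directly rather than re-deriving it.
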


\begin{proof}
  Since $t \mapsto V_{t,\lambda}^{1/2}$ is nondecreasing, we only need to consider the case $\tau>t^\ast$. Using $V_{t,\lambda}^{1/2}= \delta \norm{\gamma^{(t)}}$, the inverse triangle inequality, $(A-B)^2\le A^2-B^2$ for $A\ge B\ge 0$, $R_\tau^2\ge \E[R_{t^\ast}^2]$ for $\tau>t^\ast$, and Lemma \ref{LemConvArg}, we obtain:
\begin{multline*}
  \delta^{-2}\big(V_{\tau,\lambda}^{1/2}-V_{t^\ast,\lambda}^{1/2}\big)^2 \le \norm{\gamma^{(\tau)}-\gamma^{(t^\ast)}}^2\\
  \begin{aligned}
&\le\norm{1-\gamma^{(t^\ast)}}^2-\norm{1-\gamma^{(\tau)}}^2\\
&=\delta^{-2}(R_{t^\ast}^2-R_{\tau}^2)+\sum_{i=1}^D\big((1-\gamma_i^{(t^\ast)})^2-(1-\gamma_i^{(\tau)})^2\big)(1-\delta^{-2}Y_i^2)\\
&\le \delta^{-2}(R_{t^\ast}^2-\E[R_{t^\ast}^2])
+ \max_{k=0,\ldots,D}\sum_{i=1}^k (1-\gamma_i^{(t^\ast)})^2(1-\delta^{-2}Y_i^2).
  \end{aligned}
  \end{multline*}
Observe next that $Y_i^2$ is stochastically larger under $P_\mu$ with $\mu_i\not=0$ than under $P_\mu$ with $\mu_i=0$, using the unimodality and symmetry of the normal density:
\begin{align*}
\sup_{\mu\in\R^D} P_\mu(Y_i^2\le y)&=\sup_{\mu_i\in\R}\Big(\Phi(-\lambda_i\mu_i\delta^{-1}+\delta^{-1}\sqrt{y})-\Phi(-\lambda_i\mu_i\delta^{-1}-\delta^{-1}\sqrt{y})\Big)\\
&\le\Phi(\delta^{-1}\sqrt{y})-\Phi(-\delta^{-1}\sqrt{y})=P_0(Y_i^2\le y),\quad y>0.
\end{align*}
By independence of $(Y_i)$, it thus suffices to bound the deviation probability of
\[ \delta^{-2}(R_{t^\ast}^2-\E[R_{t^\ast}^2])
+ \max_{k=0,1,\ldots,D}\sum_{i=1}^k(1-\gamma_i^{(t^\ast)})^2(1-\eps_i^2).
\]
Lemma \ref{LemmaLaurentMassart} in the Appendix gives that the maximum is smaller than $2\sqrt{Dx}$ with probability at least $1-e^{-x}$, and Lemma \ref{LemDevRm} gives the deviation bound for the first term, so that the result follows by insertion.
\end{proof}





\section{Oracle property for early stopping} \label{sec:choice of kappa}

It remains to investigate the relationship of the deterministic oracle proxy $t^\ast$ with the balanced oracles $t_{\mathfrak w},t_{\mathfrak s}$ in \eqref{Eqtw},\eqref{Eqts}, which, of course, depend on the choice of $\kappa>0$ that until now was completely arbitrary. We continue working with $t_0=t_\circ$ from \eqref{Eqt0}.

By definition we have $\E[R_{t^\ast}^2]\le\kappa$ and the weak bias at $t^\ast=t^\ast(\mu)$ satisfies
\[ B_{t^\ast,\lambda}^2(\mu)\le\kappa-\delta^2\sum_{i=1}^D(1-\gamma_i^{(t^\ast)})^2= \kappa-D \delta^2-V_{t^\ast,\lambda}+2\delta^2 \sum_{i=1}^D\gamma_i^{(t^\ast)}
\]
with equality if $t^\ast>t_\circ$. At this stage we exactly require \ref{Assl1l2} of Proposition \ref{prop:suffcond} and obtain
\begin{equation}\label{EqWeakBalanceupper}
  B_{t^\ast,\lambda}^2(\mu)-\big(\kappa-D\delta^2\big)
  \le (2C_{\ell^1,\ell^2}-1)V_{t^\ast,\lambda};
\end{equation}
furthermore,  we also have (since $\gamma_i^{(t)} \in [0,1]$):
\begin{equation}
  B_{t^\ast,\lambda}^2(\mu)-\big(\kappa-D\delta^2\big)
   \ge -V_{t^\ast,\lambda}+2\delta^2 \sum_{i=1}^D(\gamma_i^{(t^\ast)})^2
  = V_{t^\ast,\lambda}, \text{ if } t^\ast>t_\circ. \label{EqWeakBalancelower}
\end{equation}
The larger the choice of $\kappa$, the smaller $t^\ast$ and thus also  $V_{t^\ast,\lambda}$. The control of $B_{t^\ast,\lambda}^2(\mu)$ is not clear because in \eqref{EqWeakBalanceupper} the effects  in $\kappa$ and $V_{t^\ast,\lambda}$ work in opposite directions. Note that for $\kappa\le D\delta^2$, the weak bias part dominates the weak variance at $t^\ast$, in other words $t^\ast\le t_{\mathfrak w}$ holds. A natural choice is therefore $\kappa=D\delta^2$
%
but other choices could be tailored; moreover, the noise variance $\delta^2$ usually needs to be estimated. For these reasons we shall allow for deviations of the form
\begin{equation}\label{Eqkappa}
\abs{\kappa-D \delta^2}\le C_\kappa \sqrt D\delta^2 \text{  for some $C_\kappa>0$.}
\end{equation}
Thanks to the control of $\E[\norm{\widehat\mu^{(\tau)}-\widehat\mu^{(t^\ast)}}_{\mathsf A}^2]$ in Proposition \ref{PropMainBound}, a weakly balanced
oracle inequality can be derived.

\begin{proposition}\label{PropWeakOracleIneq}
Grant \eqref{Eqkappa} for $\kappa$ and \ref{Assl1l2} of Proposition \ref{prop:suffcond}.
Then the following oracle inequality holds in weak norm:
\begin{align*}
\E\big[\norm{\widehat\mu^{(\tau)}-\mu}_{\mathsf A}^2\big]
&\le 2C_{\ell^1,\ell^2} \E\big[\norm{\widehat\mu^{(t_{\mathfrak w})}-\mu}_{\mathsf A}^2\big]+4\big(2C_{\ell^1,\ell^2}+C_\kappa \big)\sqrt D\delta^2.
\end{align*}
\end{proposition}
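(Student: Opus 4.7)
The plan is to combine the weak-norm oracle inequality \eqref{oracleexact} of Proposition~\ref{PropMainBound} with the one-sided balancing relations \eqref{EqWeakBalanceupper}--\eqref{EqWeakBalancelower}, which compare the oracle proxy $t^\ast$ to the weakly balanced oracle $t_{\mathfrak w}$. Since \eqref{oracleexact} is stated in square-root form, squaring it and applying $(a+b)^2\le 2a^2+2b^2$ reduces the task to separately bounding $\E\bigl[\norm{\widehat\mu^{(t^\ast)}-\mu}_{\mathsf A}^2\bigr]$ and the remainder $(2D\delta^4+4\delta^2 B_{t^\ast,\lambda}^2(\mu))^{1/2}$, both in terms of $\E\bigl[\norm{\widehat\mu^{(t_{\mathfrak w})}-\mu}_{\mathsf A}^2\bigr]$ and the $\sqrt D\,\delta^2$ scale, at the cost of absorbing the $2C_{\ell^1,\ell^2}+C_\kappa$ factors into the remainder.

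For the oracle-proxy error, I write $\E\bigl[\norm{\widehat\mu^{(t^\ast)}-\mu}_{\mathsf A}^2\bigr]=B_{t^\ast,\lambda}^2(\mu)+V_{t^\ast,\lambda}$ and combine \eqref{EqWeakBalanceupper} with the calibration \eqref{Eqkappa} to obtain
\[
B_{t^\ast,\lambda}^2(\mu)+V_{t^\ast,\lambda}\le 2C_{\ell^1,\ell^2}\,V_{t^\ast,\lambda}+C_\kappa\sqrt D\,\delta^2.
\]
The task then reduces to replacing $V_{t^\ast,\lambda}$ by the weakly balanced oracle's error, which I handle by a case analysis. If $t^\ast\le t_{\mathfrak w}$, monotonicity of $t\mapsto V_{t,\lambda}$ directly yields $V_{t^\ast,\lambda}\le V_{t_{\mathfrak w},\lambda}\le\E\bigl[\norm{\widehat\mu^{(t_{\mathfrak w})}-\mu}_{\mathsf A}^2\bigr]$. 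Otherwise $t^\ast>t_{\mathfrak w}\ge t_\circ$, so the lower balance \eqref{EqWeakBalancelower} applies and yields $V_{t^\ast,\lambda}\le B_{t^\ast,\lambda}^2(\mu)+C_\kappa\sqrt D\,\delta^2$; combining the monotonicity of the weak bias with the defining inequality $B_{t_{\mathfrak w},\lambda}^2(\mu)\le V_{t_{\mathfrak w},\lambda}$ at $t_{\mathfrak w}$ then gives $V_{t^\ast,\lambda}\le V_{t_{\mathfrak w},\lambda}+C_\kappa\sqrt D\,\delta^2$. In both cases $V_{t^\ast,\lambda}\le \E\bigl[\norm{\widehat\mu^{(t_{\mathfrak w})}-\mu}_{\mathsf A}^2\bigr]+C_\kappa\sqrt D\,\delta^2$.

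For the remainder $(2D\delta^4+4\delta^2 B_{t^\ast,\lambda}^2(\mu))^{1/2}$, I use \eqref{EqWeakBalanceupper} together with the crude bound $V_{t^\ast,\lambda}\le D\delta^2$ to obtain $B_{t^\ast,\lambda}^2(\mu)\lesssim D\delta^2$, whence $(2D\delta^4+4\delta^2 B_{t^\ast,\lambda}^2(\mu))^{1/2}\lesssim \sqrt D\,\delta^2$ with a constant depending only on $C_{\ell^1,\ell^2}$ and $C_\kappa$. Substituting the two bounds back into the squared form of \eqref{oracleexact} and simplifying gives the announced oracle inequality in weak norm.

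The main obstacle is the crossed case $t^\ast>t_{\mathfrak w}$, where variance monotonicity points in the wrong direction, forcing one to rely on the reverse balance \eqref{EqWeakBalancelower}; this is precisely where the calibration \eqref{Eqkappa} of the threshold around $D\delta^2$ is essential, because otherwise the deviation $\kappa-D\delta^2$ would pollute the bound at order $D\delta^2$ rather than $\sqrt D\,\delta^2$. A minor subtlety is that \eqref{EqWeakBalancelower} only holds when $t^\ast>t_\circ$, but this is automatic in the problematic case since $t_{\mathfrak w}\ge t_\circ$ by construction.
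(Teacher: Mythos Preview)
Your overall strategy matches the paper's proof: square \eqref{oracleexact} via $(a+b)^2\le 2a^2+2b^2$, bound $B_{t^\ast,\lambda}^2+V_{t^\ast,\lambda}$ in terms of the weakly balanced oracle by a case distinction on $t^\ast\lessgtr t_{\mathfrak w}$, and control the remainder with the crude bound $V_{t^\ast,\lambda}\le D\delta^2$. The ingredients \eqref{EqWeakBalanceupper}, \eqref{EqWeakBalancelower}, \eqref{Eqkappa} and the monotonicity of $B_{t,\lambda}^2,V_{t,\lambda}$ are exactly the right ones, and your remark on why $t^\ast>t_\circ$ in the ``crossed'' case is correct.

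However, your argument does not quite deliver the stated constant $2C_{\ell^1,\ell^2}$: it only yields $4C_{\ell^1,\ell^2}$. The slip is in the step ``$V_{t^\ast,\lambda}\le \E\bigl[\norm{\widehat\mu^{(t_{\mathfrak w})}-\mu}_{\mathsf A}^2\bigr]+C_\kappa\sqrt D\,\delta^2$''. When you insert this into $B_{t^\ast,\lambda}^2+V_{t^\ast,\lambda}\le 2C_{\ell^1,\ell^2}V_{t^\ast,\lambda}+C_\kappa\sqrt D\,\delta^2$ and then double, the leading factor becomes $4C_{\ell^1,\ell^2}$. The paper recovers the sharp factor by exploiting the balance at $t_{\mathfrak w}$ to gain a factor $2$: in the case $t^\ast<t_{\mathfrak w}$ one has $t_{\mathfrak w}>t_\circ$ and hence $B_{t_{\mathfrak w},\lambda}^2=V_{t_{\mathfrak w},\lambda}$, so $\E\bigl[\norm{\widehat\mu^{(t_{\mathfrak w})}-\mu}_{\mathsf A}^2\bigr]=2V_{t_{\mathfrak w},\lambda}\ge 2V_{t^\ast,\lambda}$, which turns $2C_{\ell^1,\ell^2}V_{t^\ast,\lambda}$ directly into $C_{\ell^1,\ell^2}\E[\ldots]$. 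In the case $t^\ast>t_{\mathfrak w}$ one should not route through $2C_{\ell^1,\ell^2}V_{t^\ast,\lambda}$ at all: from \eqref{EqWeakBalancelower} one gets $B_{t^\ast,\lambda}^2+V_{t^\ast,\lambda}\le 2B_{t^\ast,\lambda}^2+C_\kappa\sqrt D\,\delta^2\le 2B_{t_{\mathfrak w},\lambda}^2+C_\kappa\sqrt D\,\delta^2\le \E[\ldots]+C_\kappa\sqrt D\,\delta^2$, using $B_{t_{\mathfrak w},\lambda}^2\le V_{t_{\mathfrak w},\lambda}$. With these refinements both cases give $B_{t^\ast,\lambda}^2+V_{t^\ast,\lambda}\le C_{\ell^1,\ell^2}\E[\ldots]+C_\kappa\sqrt D\,\delta^2$, and doubling yields the stated $2C_{\ell^1,\ell^2}$.
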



\begin{proof}
  Consider first the case $t_{\mathfrak w} > t^\ast$. Then $B_{t_{\mathfrak w},\lambda}^2=V_{t_{\mathfrak w},\lambda}$ since $t_{\mathfrak w} > t_\circ$, and we have by monotonicity in $t$
  of $V_{t,\lambda}$:
  \[
  \E[\norm{\widehat\mu^{(t_{\mathfrak w})}-\mu}_{\mathsf A}^2]=B_{t_{\mathfrak w},\lambda}^2+V_{t_{\mathfrak w},\lambda} = 2 V_{t_{\mathfrak w},\lambda} \ge 2 V_{t^\ast,\lambda}.
  \]
Moreover, from inequality \eqref{EqWeakBalanceupper} together with \eqref{Eqkappa}, we have
\begin{equation}
  \label{eq:biasbound}
  B_{t^\ast,\lambda}^2 \le (2C_{\ell^1,\ell^2}-1)V_{t^\ast,\lambda}+C_\kappa\sqrt D\delta^2,
  \end{equation}
  and bringing together the  last two displays yields
  \[
  B_{t^\ast,\lambda}^2 +    V_{t^\ast,\lambda}
  \le C_{\ell^1,\ell^2} \E[\norm{\widehat\mu^{(t_{\mathfrak w})}-\mu}_{\mathsf A}^2]+C_\kappa
  \sqrt D\delta^2.
\]
In the case $t_{\mathfrak w} < t^\ast$, since $B_{t_{\mathfrak w},\lambda}^2\le V_{t_{\mathfrak w},\lambda}$
always holds, by monotonicity in $t$ of $B^2_{t,\lambda}$ we have
\[
\E[\norm{\widehat\mu^{(t_{\mathfrak w})}-\mu}_{\mathsf A}^2]=B_{t_{\mathfrak w},\lambda}^2+V_{t_{\mathfrak w},\lambda} \ge  2 B^2_{t_{\mathfrak w},\lambda} \ge 2 B^2_{t^\ast,\lambda}.
\]
Moreover, from equation \eqref{EqWeakBalancelower} (which holds since in this
case $t^\ast> t_\circ$), together with \eqref{Eqkappa}, we have
\[
  V_{t^\ast,\lambda} \le  B_{t^\ast,\lambda}^2 + C_\kappa\sqrt D\delta^2;
\]
combining the two last displays and using $C_{\ell^1,\ell^2}\ge 1$ yields again
\[
B_{t^\ast,\lambda}^2 +    V_{t^\ast,\lambda}
\le C_{\ell^1,\ell^2} \E[\norm{\widehat\mu^{(t_{\mathfrak w})}-\mu}_{\mathsf A}^2]
+C_\kappa \sqrt D\delta^2,
\]
so that this inequality holds in all cases (including $t^\ast=t_{\mathfrak w}$ in which case the inequality holds trivially since $C_{\ell^1,\ell^2}\ge 1$).
Applying \eqref{eq:propmainbound} and $(A+B)^2\le 2A^2 + 2B^2$, we arrive at
\begin{multline*}
\E\big[\norm{\widehat\mu^{(\tau)}-\mu}_{\mathsf A}^2\big] \le 2\E\big[\norm{\widehat\mu^{(t^\ast)}-\mu}_{\mathsf A}^2\big]+2\big(2D\delta^4+4\delta^2 B_{t^\ast,\lambda}^2\big)^{1/2}\\
\le 2C_{\ell^1,\ell^2}\E[\norm{\widehat\mu^{(t_{\mathfrak w})}-\mu}_{\mathsf A}^2]+2C_\kappa
\sqrt D\delta^2+4\delta\big(\tfrac12D\delta^2+B_{t^\ast,\lambda}^2\big)^{1/2}.
\end{multline*}
Furthermore, $\tfrac12D\delta^2+ B_{t^\ast,\lambda}^2\le (2C_{\ell^1,\ell^2}-\tfrac12)D\delta^2+C_\kappa\sqrt D\delta^2$ follows directly from \eqref{eq:biasbound} (which holds in all cases) and the trivial bound $V_{t^\ast,\lambda}\le D\delta^2$. It remains to simplify the bound, using $C_{\ell^1,\ell^2}\ge 1$.
\end{proof}

In weak norm, the oracle inequality immediately implies rate-optimal estimation by $\widehat\mu^{(\tau)}$ whenever the weak oracle error $\inf_{t\ge 0}\E\big[\norm{\widehat\mu^{(t)}-\mu}_{\mathsf{A}}^2\big]$ is at least of order ${\sqrt D}\delta^2$.
The constants are not optimised, but give a reasonable order of magnitude.

In strong norm, the oracle property is more involved.
The next result shows that the strong error at $t^\ast$ can be bounded by the strong  error at the weakly balanced oracle $t_{\mathfrak w}$, which depends only on the underlying regularisation method and on the spectrum of $\mathsf A$, but not on the particular adaptation method.

\begin{proposition}\label{LemOracleProxyBal}
Grant \ref{AssBias2}, \ref{AssVar} and \ref{Assl1l2} of Proposition \ref{prop:suffcond} and \eqref{Eqkappa} for $\kappa$ with constants $\pi,C_{V,\lambda},C_{\ell^1,\ell^2},C_\kappa$. Then the oracle proxy $t^\ast$ and the weakly balanced oracle $t_{\mathfrak w}$ satisfy the strong norm bound
$$\E\big[\norm{\widehat\mu^{(t^\ast)}-\mu}^2\big]\le C_{t^\ast,t_{\mathfrak w}} \E\big[\norm{\widehat\mu^{(t_{\mathfrak w})}-\mu}^2\big],
$$
where $C_{t^\ast,t_{\mathfrak w}}=\max\Big(2C_{\ell^1,\ell^2}+C_\kappa C_\circ^{-1}-1,C_{V,\lambda}(1+C_\kappa C_\circ^{-1})^\pi\Big)$.
\end{proposition}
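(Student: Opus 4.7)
The plan is to split according to whether $t^\ast \le t_{\mathfrak w}$ or $t^\ast > t_{\mathfrak w}$, and in each case to use the ``cheap'' direction of monotonicity for one of the two strong quantities ($B^2$ or $V$) and a weak-to-strong transfer for the other. The two displays \eqref{EqWeakBalanceupper}--\eqref{EqWeakBalancelower} already derived from \ref{Assl1l2} and \eqref{Eqkappa} are the main bridge between the residual-based index $t^\ast$ and a balanced state; the extra $\sqrt D\delta^2$ term appearing there will be absorbed using the lower bound $V_{t_{\mathfrak w},\lambda}\ge V_{t_\circ,\lambda}=C_\circ\sqrt D\delta^2$ guaranteed by $t_{\mathfrak w}\ge t_\circ$ and the choice of $t_\circ$.

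In the first case $t^\ast\le t_{\mathfrak w}$, variance monotonicity gives $V_{t^\ast}\le V_{t_{\mathfrak w}}$ immediately. For the bias, I start from \eqref{EqWeakBalanceupper}, bound $V_{t^\ast,\lambda}\le V_{t_{\mathfrak w},\lambda}=B_{t_{\mathfrak w},\lambda}^2$ using the weak balance at $t_{\mathfrak w}$ (which holds since $t_{\mathfrak w}>t_\circ$; the boundary case $t_{\mathfrak w}=t_\circ$ forces $t^\ast=t_{\mathfrak w}$ and the conclusion is trivial), and rewrite $C_\kappa\sqrt D\delta^2\le C_\kappa C_\circ^{-1} V_{t_{\mathfrak w},\lambda}=C_\kappa C_\circ^{-1} B_{t_{\mathfrak w},\lambda}^2$. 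This yields $B_{t^\ast,\lambda}^2\le (2C_{\ell^1,\ell^2}-1+C_\kappa C_\circ^{-1})B_{t_{\mathfrak w},\lambda}^2$, and the bias transfer Lemma \ref{LemBiasTrans} (valid under \ref{AssBias2}) promotes this inequality to strong norm: $B_{t^\ast}^2\le (2C_{\ell^1,\ell^2}-1+C_\kappa C_\circ^{-1})B_{t_{\mathfrak w}}^2$. Adding the two estimates gives the first term in the maximum defining $C_{t^\ast,t_{\mathfrak w}}$.

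In the second case $t^\ast>t_{\mathfrak w}$, bias monotonicity gives $B_{t^\ast}^2\le B_{t_{\mathfrak w}}^2$ directly. Since $t^\ast>t_{\mathfrak w}\ge t_\circ$, the lower bound \eqref{EqWeakBalancelower} is available; combined with $B_{t^\ast,\lambda}^2\le B_{t_{\mathfrak w},\lambda}^2=V_{t_{\mathfrak w},\lambda}$ and $\sqrt D\delta^2\le V_{t_{\mathfrak w},\lambda}/C_\circ$ it yields the weak variance ratio bound
\[
\frac{V_{t^\ast,\lambda}}{V_{t_{\mathfrak w},\lambda}}\le 1+\frac{C_\kappa}{C_\circ}.
\]
Feeding this into the variance transfer Assumption \ref{AssVar} gives $V_{t^\ast}\le C_{V,\lambda}(1+C_\kappa C_\circ^{-1})^\pi V_{t_{\mathfrak w}}$, so $B_{t^\ast}^2+V_{t^\ast}\le C_{V,\lambda}(1+C_\kappa C_\circ^{-1})^\pi(B_{t_{\mathfrak w}}^2+V_{t_{\mathfrak w}})$, which is the second term of the maximum.

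Taking the maximum of the two bounds and recalling $\E[\norm{\widehat\mu^{(t)}-\mu}^2]=B_t^2+V_t$ yields the claim. The main subtle point is the first case: one must carefully absorb the additive $\sqrt D\delta^2$ term into the weakly balanced quantity before invoking the bias transfer, because Lemma \ref{LemBiasTrans} is a purely multiplicative statement and cannot digest an additive remainder.
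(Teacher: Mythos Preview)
Your proof is correct and follows essentially the same route as the paper's: the two-case split $t^\ast\lessgtr t_{\mathfrak w}$, the use of \eqref{EqWeakBalanceupper}--\eqref{EqWeakBalancelower} to compare weak quantities, and the invocation of Lemma~\ref{LemBiasTrans} (resp.\ \ref{AssVar}) for the bias (resp.\ variance) transfer are exactly what the paper does. One cosmetic point: in the case $t^\ast>t_{\mathfrak w}$ you write $B_{t_{\mathfrak w},\lambda}^2=V_{t_{\mathfrak w},\lambda}$, but only $B_{t_{\mathfrak w},\lambda}^2\le V_{t_{\mathfrak w},\lambda}$ is guaranteed when $t_{\mathfrak w}=t_\circ$; the inequality is all you need, so the argument goes through unchanged.
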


\begin{proof}
For $t^\ast<t_{\mathfrak w}$, we  obtain by \eqref{EqWeakBalanceupper}, using $V_{t^\ast,\lambda}\le V_{t_{\mathfrak w},\lambda}=B_{t_{\mathfrak w},\lambda}^2$
(equality due to $t_{\mathfrak w}>t_\circ$) as well as $C_\circ\sqrt D\delta^2\le V_{t_\circ,\lambda}\le V_{t_{\mathfrak w},\lambda}$:
\[ B_{t^\ast,\lambda}^2 \le (2C_{\ell^1,\ell^2}+C_\kappa C_\circ^{-1}-1)B_{t_{\mathfrak w},\lambda}^2.\]
By Lemma \ref{LemBiasTrans}, we can transfer a weak bias inequality into a strong bias inequality with the same constant and the result follows. In the case $t^\ast>t_{\mathfrak w}$, we argue in a similiar manner using \eqref{EqWeakBalancelower} (which holds
since $t^*>t_\circ$):
\[ V_{t^\ast,\lambda}\le B_{t^\ast,\lambda}^2+C_\kappa\sqrt D\delta^2\le  B_{t_{\mathfrak w},\lambda}^2+C_\kappa C_\circ^{-1} V_{t_\circ,\lambda}\le   (1+C_\kappa C_\circ^{-1})V_{t_{\mathfrak w},\lambda},\]
followed by the variance transfer guaranteed by \ref{AssVar}.
\end{proof}

Next, we turn to the control of the strong bias at the weak oracle $t_{\mathfrak w}$. Surprisingly, this is quite universally feasible whenever $t_{\mathfrak w}$ is smaller than $t_{\mathfrak s}$.

\begin{proposition}\label{PropOrProperty1}
  Grant Assumptions {\bf (R)} and {\bf (S)}. For all $\mu$ with $t_{\mathfrak w}(\mu) \le t_{\mathfrak s}(\mu)$ 
  we have with the constant $C_V$ from Lemma \ref{LemVart}:
 \[\E\big[\norm{\widehat\mu^{({t_{\mathfrak w}})}-\mu}^2]\le C_{\mathfrak w,\mathfrak s} \E\big[\norm{\widehat\mu^{(t_{\mathfrak s} )}-\mu}^2\big],
 \]
 with $C_{\mathfrak w,\mathfrak s}=\big((2\beta_+)^{2/\rho}C_V+4\big)$.
\end{proposition}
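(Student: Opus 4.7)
The plan is to decompose each MISE as $B^2+V$ and handle the variance and bias parts separately. Monotonicity of $t\mapsto V_t$ combined with $t_{\mathfrak w}\le t_{\mathfrak s}$ immediately gives $V_{t_{\mathfrak w}}\le V_{t_{\mathfrak s}}$, so the entire work reduces to bounding the strong bias $B_{t_{\mathfrak w}}^2$, which is a priori \emph{larger} than $B_{t_{\mathfrak s}}^2$. Two tools are available: the weak balance $B_{t_{\mathfrak w},\lambda}^2\le V_{t_{\mathfrak w},\lambda}$ at $t_{\mathfrak w}$ (by definition of $t_{\mathfrak w}$, since $t_{\mathfrak w}>t_\circ$ under our hypothesis), and the weak-to-strong variance transfer supplied by Lemma \ref{LemVart}.

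The core step is a spectral cutoff based on \ref{AssGComp}. I would introduce
\[i^* := \max\bigl\{i : \lambda_i\ge t_{\mathfrak s}^{-1}(2\beta_+)^{-1/\rho}\bigr\},\]
the index where the upper bound $\gamma_i^{(t_{\mathfrak s})}\le\beta_+(t_{\mathfrak s}\lambda_i)^\rho$ crosses $1/2$, and split $B_{t_{\mathfrak w}}^2$ at $i^*$. On the low-index piece $i\le i^*$, the elementary inequality $1\le t_{\mathfrak s}^2(2\beta_+)^{2/\rho}\lambda_i^2$ inserts the missing $\lambda_i^2$-weight and gives $\sum_{i\le i^*}(1-\gamma_i^{(t_{\mathfrak w})})^2\mu_i^2\le t_{\mathfrak s}^2(2\beta_+)^{2/\rho}B_{t_{\mathfrak w},\lambda}^2$. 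On the high-index piece $i>i^*$, Assumption \ref{AssGComp} forces $\gamma_i^{(t_{\mathfrak s})}<1/2$, hence $(1-\gamma_i^{(t_{\mathfrak s})})^2\ge 1/4$ and $(1-\gamma_i^{(t_{\mathfrak w})})^2\le 1\le 4(1-\gamma_i^{(t_{\mathfrak s})})^2$, so this piece is bounded by $4B_{t_{\mathfrak s}}^2$. Summing yields
\[B_{t_{\mathfrak w}}^2\le (2\beta_+)^{2/\rho}t_{\mathfrak s}^2 B_{t_{\mathfrak w},\lambda}^2+4B_{t_{\mathfrak s}}^2.\]
Chaining $B_{t_{\mathfrak w},\lambda}^2\le V_{t_{\mathfrak w},\lambda}\le V_{t_{\mathfrak s},\lambda}$ and applying Lemma \ref{LemVart} at $t_{\mathfrak s}$ gives $t_{\mathfrak s}^2 V_{t_{\mathfrak s},\lambda}\le C_V V_{t_{\mathfrak s}}$ (cleanly when $t_{\mathfrak s}\le\lambda_D^{-1}$); adding $V_{t_{\mathfrak w}}\le V_{t_{\mathfrak s}}$ then produces
\[B_{t_{\mathfrak w}}^2+V_{t_{\mathfrak w}}\le\bigl((2\beta_+)^{2/\rho}C_V+1\bigr)V_{t_{\mathfrak s}}+4B_{t_{\mathfrak s}}^2\le C_{\mathfrak w,\mathfrak s}\bigl(B_{t_{\mathfrak s}}^2+V_{t_{\mathfrak s}}\bigr),\]
the residual $+1$ being absorbed into the $+4$ via $V_{t_{\mathfrak s}}\le B_{t_{\mathfrak s}}^2+V_{t_{\mathfrak s}}$.

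The main obstacle, and the step deserving the most care, is the regime $t_{\mathfrak s}>\lambda_D^{-1}$: there Lemma \ref{LemVart} only gives $V_{t_{\mathfrak s},\lambda}\le C_V\lambda_D^2 V_{t_{\mathfrak s}}$, and the naive product $t_{\mathfrak s}^2V_{t_{\mathfrak s},\lambda}$ picks up a parasitic factor $(t_{\mathfrak s}\lambda_D)^2$. The resolution is to run the entire cutoff with $\bar t:=t_{\mathfrak s}\wedge\lambda_D^{-1}$ in place of $t_{\mathfrak s}$: the low-index piece becomes $\bar t^2(2\beta_+)^{2/\rho}B_{t_{\mathfrak w},\lambda}^2$ and matches the factor $C_V\bar t^{-2}V_{t_{\mathfrak s}}$ from the lemma exactly, while the high-index set $\{i:\lambda_i<\bar t^{-1}(2\beta_+)^{-1/\rho}\}$ collapses in the degenerate case $\bar t=\lambda_D^{-1}$ to $\{i:\lambda_i<\lambda_D(2\beta_+)^{-1/\rho}\}$, which is empty under $\beta_+\ge 1/2$ (the case of all concrete filters in Example \ref{detailed filters}). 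With this dichotomy the constant $(2\beta_+)^{2/\rho}C_V+4$ emerges uniformly in both regimes.
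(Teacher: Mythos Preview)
Your core argument—the spectral cutoff at the threshold $t_{\mathfrak s}\lambda_i=(2\beta_+)^{-1/\rho}$, splitting $B_{t_{\mathfrak w}}^2$ into a low-index piece bounded via $1\le (2\beta_+)^{2/\rho}t_{\mathfrak s}^2\lambda_i^2$ by $(2\beta_+)^{2/\rho}t_{\mathfrak s}^2B_{t_{\mathfrak w},\lambda}^2$ and a high-index piece bounded by $4B_{t_{\mathfrak s}}^2$—is exactly the paper's proof in the regime $t_{\mathfrak s}\le\lambda_D^{-1}$. (One small remark: the inequality $B_{t_{\mathfrak w},\lambda}^2\le V_{t_{\mathfrak w},\lambda}$ holds by definition of $t_{\mathfrak w}$ regardless of whether $t_{\mathfrak w}>t_\circ$ or $t_{\mathfrak w}=t_\circ$; your parenthetical is unnecessary.)

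For the edge case $t_{\mathfrak s}>\lambda_D^{-1}$, the paper takes a much simpler route than your $\bar t$-cutoff. Since every singular value satisfies $\lambda_i\ge\lambda_D$, one has the trivial bound
\[B_{t_{\mathfrak w}}^2=\sum_{i=1}^D(1-\gamma_i^{(t_{\mathfrak w})})^2\mu_i^2\le\lambda_D^{-2}\sum_{i=1}^D(1-\gamma_i^{(t_{\mathfrak w})})^2\lambda_i^2\mu_i^2=\lambda_D^{-2}B_{t_{\mathfrak w},\lambda}^2,\]
after which the same chain $B_{t_{\mathfrak w},\lambda}^2\le V_{t_{\mathfrak w},\lambda}\le V_{t_{\mathfrak s},\lambda}\le C_V\lambda_D^2 V_{t_{\mathfrak s}}$ yields $B_{t_{\mathfrak w}}^2\le C_V V_{t_{\mathfrak s}}$ directly, with no cutoff and no side condition on $\beta_+$. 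Your version reaches the same endpoint when $\beta_+\ge 1/2$ (the high-index set collapses), but imports a hypothesis not present in the statement of the proposition; the paper's one-liner avoids this entirely.
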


\begin{proof}
 First assume $t_{\mathfrak s} \leq \lambda_{D}^{-1}$.
By Assumption~\ref{AssGComp} we have $4(1-\gamma_i^{(t)})^2\ge 1$ if $t\lambda_i\le c:=(2\beta_+)^{-1/\rho}$. Consequently, for any $t\geq t_{\mathfrak w}$:
\begin{align*}
B_{t_{\mathfrak w}}^2-4B_t^2 &= \sum_{i=1}^D \big((1-\gamma_i^{(t_{\mathfrak w})})^2-4(1-\gamma_i^{(t)})^2\big)\mu_i^2\\
&\le \sum_{i:\lambda_i> ct^{-1}}  (1-\gamma_i^{(t_{\mathfrak w})})^2\mu_i^2\\
&\le \sum_{i:\lambda_i> ct^{-1}}  (1-\gamma_i^{(t_{\mathfrak w})})^2(c^{-1}t\lambda_i)^2\mu_i^2\\
& \le c^{-2}t^2  B_{t_{\mathfrak w},\lambda}^2\le c^{-2}t^2  V_{t_{\mathfrak w},\lambda}.
\end{align*}
From Lemma \ref{LemVart} we know $V_{t,\lambda}\le C_V(t\wedge \lambda_D^{-1})^{-2}V_t$.
We insert $t=t_{\mathfrak s} (\leq \lambda_D^{-1})$ and use
$V_{t_{\mathfrak w},\lambda}\le V_{{\mathfrak s},\lambda}$ to conclude
\[ B_{t_{\mathfrak w}}^2\le 4B_{t_{\mathfrak s}}^2+c^{-2}C_V V_{t_{\mathfrak s}}.\]
Adding $V_{t_{\mathfrak w}}\le V_{{\mathfrak s}}$ and simplifying the constant yields the result.

Consider now the case $t_{\mathfrak s} > \lambda_D^{-1}$.
In this case Lemma~\ref{LemVart} implies $\lambda_D^{-2} V_{t_{\mathfrak s,\lambda}} \leq C_V  V_{t_{\mathfrak s}} $.
For the bias, we have by definition of $t_{\mathfrak w}$ and monotonicity:
\[
  B^2_{t_{\mathfrak w}} \le \lambda_D^{-2} B^2_{t_{\mathfrak w},\lambda}
  \le \lambda_D^{-2} V_{t_{\mathfrak w},\lambda}  \leq \lambda_D^{-2} V_{t_{\mathfrak s},\lambda}
  \leq C_V V_{t_{\mathfrak s}},
\]
also implying the desired result.
\end{proof}

Section \ref{SecLandweberSignal} below shows for the Landweber method that $t_{\mathfrak w}(\mu) \le t_{\mathfrak s}(\mu)$ or at least $V_{t_{\mathfrak w}(\mu)} \lesssim  V_{t_{\mathfrak s}(\mu)}$ holds for a large class of polynomially decaying signals $\mu$. For rapidly decaying signals $\mu$, however, the inverse relationship $ t_{\mathfrak w}(\mu)  \gg t_{\mathfrak s}(\mu)$ may happen:

\begin{example}[Generic counterexample to $t_{\mathfrak w} \le t_{\mathfrak s}$]\label{CounterEx}
  Consider the signal $\mu_1=1$, $\mu_i=0$ for $i\ge 2$ and assume $\lambda_1=1$, $\gamma_1^{(t)}<1$  for all $t\ge 0$. Then we have $B_t^2(\mu)=B^2_{t,\lambda}(\mu)>0$ whereas $V_t\thicksim (t\wedge \lambda_D^{-1})^2V_{t,\lambda}$ holds in the setting of Lemma \ref{LemVart}. Hence, noting that $t_{\mathfrak w}\to\infty$  as $\delta\to 0$, we see that  $V_{t_{\mathfrak w}}
  \thicksim  ( t_{\mathfrak w} \wedge \lambda_D^{-1})^2 V_{t_{\mathfrak w},\lambda}
  = (t_{\mathfrak w} \wedge \lambda_D^{-1}) B^2_{t_{\mathfrak w},\lambda }(\mu)
  = (t_{\mathfrak w } \wedge \lambda_D^{-1})^2 B_{t_{\mathfrak w}}^2(\mu)$ is
  larger than $B_{t_{\mathfrak w}}^2(\mu)$, implying $t_{\mathfrak w} > t_{\mathfrak s}$.
  If we consider an asymptotic setting where $D\rightarrow \infty, \lambda_D \rightarrow 0$
  as $\delta \rightarrow 0$, we even have
  $t_{\mathfrak w}/t_{\mathfrak s}\to\infty$ as $\delta\to 0$.
\end{example}

The weakly balanced oracle does not profit from the regularity of $\mu$ in strong norm. Notice that this loss of efficiency is  intrinsic to residual-based stopping rules which have access to the weak bias only. Still, we are able to control the error by an inflated weak oracle error.

\begin{theorem} \label{thm: oracle end}
  Suppose Assumptions {\bf (R)}, {\bf (S)} hold with $\rho>\max( \nu_+,1+\frac{\nu_+}{2})$
  and  \eqref{Eqkappa}  holds for $\kappa$ with $C_\kappa\in[0,C_\circ)$. Then for all $\mu$ with $ t_{\mathfrak w}(\mu)  \le t_{\mathfrak s}(\mu)$ we  have
\[\E\big[\norm{\widehat\mu^{(\tau)}-\mu}^2\big]\le C_{\tau,{\mathfrak s}}\E\big[\norm{\widehat\mu^{(t_{\mathfrak s})}-\mu}^2\big].
\]
For all $\mu$ with 
$t_{\mathfrak w}(\mu)  \ge t_{\mathfrak s}(\mu)$ we obtain
\[\E\big[\norm{\widehat\mu^{(\tau)}-\mu}^2\big]\le C_{\tau,{\mathfrak w}}
(t_{\mathfrak w}\wedge \lambda_D^{-1})^2\E\big[\norm{\widehat\mu^{(t_{\mathfrak w})}-\mu}_{\mathsf A}^2\big].\]
The constants $C_{\tau,{\mathfrak s}}$ and
$C_{\tau,{\mathfrak w}}$ depend only on $\rho,\beta_-,\beta_+,L,\nu_-,\nu_+,C_\circ,C_\kappa$.
\end{theorem}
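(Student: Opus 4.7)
The plan is to reduce both regimes to the oracle proxy bound of Corollary~\ref{CorStrongOrIneq} and then chain the previously-established comparisons. First I verify the hypothesis of that corollary: the upper bound $B_{t^\ast,\lambda}^2(\mu)\le C_2\sup_{t>0}V_{t,\lambda}$ follows directly from \eqref{EqWeakBalanceupper} together with $V_{t^\ast,\lambda}\le D\delta^2$ and \eqref{Eqkappa}, while on $\{t^\ast>t_\circ\}$ the lower bound \eqref{EqWeakBalancelower} yields $B_{t^\ast,\lambda}^2\ge V_{t^\ast,\lambda}+(\kappa-D\delta^2)\ge (1-C_\kappa/C_\circ)V_{t_\circ,\lambda}$, which is strictly positive by the hypothesis $C_\kappa<C_\circ$. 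Since Proposition~\ref{prop:suffcond} supplies \ref{AssBias2}--\ref{Assl1l2} under the standing assumptions, Corollary~\ref{CorStrongOrIneq} gives
\begin{equation*}
\E\big[\norm{\widehat\mu^{(\tau)}-\mu}^2\big]\le C_{\tau,t^\ast}\E\big[\norm{\widehat\mu^{(t^\ast)}-\mu}^2\big]
\end{equation*}
in both regimes, and it remains to bound the right-hand side suitably.

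For the first case $t_{\mathfrak w}(\mu)\le t_{\mathfrak s}(\mu)$, the conclusion follows by chaining the above with Proposition~\ref{LemOracleProxyBal} (transferring from $t^\ast$ to $t_{\mathfrak w}$ in strong norm) and then Proposition~\ref{PropOrProperty1}, which is designed precisely for this regime. The resulting constant is $C_{\tau,{\mathfrak s}}=C_{\tau,t^\ast}C_{t^\ast,t_{\mathfrak w}}C_{\mathfrak w,\mathfrak s}$.

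The second case $t_{\mathfrak w}(\mu)\ge t_{\mathfrak s}(\mu)$ is more delicate since Proposition~\ref{PropOrProperty1} is unavailable. Here I would split $\E[\norm{\widehat\mu^{(t^\ast)}-\mu}^2]=B_{t^\ast}^2+V_{t^\ast}$ and bound each term by a multiple of $(t_{\mathfrak w}\wedge\lambda_D^{-1})^2V_{t_{\mathfrak w},\lambda}$. For the strong variance, the key tool is the upper bound $V_t\le c_V^{-1}(t\wedge\lambda_D^{-1})^2V_{t,\lambda}$ from the second part of Lemma~\ref{LemVart}: when $t^\ast\le t_{\mathfrak w}$ this directly yields $V_{t^\ast}\lesssim(t_{\mathfrak w}\wedge\lambda_D^{-1})^2V_{t_{\mathfrak w},\lambda}$, whereas when $t^\ast>t_{\mathfrak w}$ one first shows $V_{t^\ast,\lambda}\le (1+C_\kappa/C_\circ)V_{t_{\mathfrak w},\lambda}$ via \eqref{EqWeakBalancelower} combined with $B_{t^\ast,\lambda}^2\le B_{t_{\mathfrak w},\lambda}^2=V_{t_{\mathfrak w},\lambda}$, and then invokes Assumption~\ref{AssVar} followed by Lemma~\ref{LemVart} applied to $V_{t_{\mathfrak w}}$. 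For the strong bias, monotonicity handles $t^\ast\ge t_{\mathfrak w}$ directly; for $t^\ast\le t_{\mathfrak w}$, inequality \eqref{EqWeakBalanceupper} combined with $V_{t^\ast,\lambda}\le V_{t_{\mathfrak w},\lambda}=B_{t_{\mathfrak w},\lambda}^2$ produces $B_{t^\ast,\lambda}^2\lesssim B_{t_{\mathfrak w},\lambda}^2$, and Lemma~\ref{LemBiasTrans} upgrades this to $B_{t^\ast}^2\lesssim B_{t_{\mathfrak w}}^2$. The defining condition $t_{\mathfrak w}\ge t_{\mathfrak s}$ then gives $B_{t_{\mathfrak w}}^2\le V_{t_{\mathfrak w}}\lesssim(t_{\mathfrak w}\wedge\lambda_D^{-1})^2V_{t_{\mathfrak w},\lambda}$. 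Finally, the trivial bound $\E[\norm{\widehat\mu^{(t_{\mathfrak w})}-\mu}_{\mathsf A}^2]\ge V_{t_{\mathfrak w},\lambda}$ closes the estimate.

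The main obstacle is the bookkeeping in the second case: one must track four subcases ($t^\ast\le t_{\mathfrak w}$ versus $t^\ast>t_{\mathfrak w}$, each for the bias and for the variance term). The subcase $t^\ast>t_{\mathfrak w}$ for the variance is where the argument is most subtle, since mere monotonicity of $V_t$ in $t$ fails; here one must exploit Assumption~\ref{AssVar}, which is how the exponent $\pi$ enters the final constant $C_{\tau,{\mathfrak w}}$. The remaining steps are routine consequences of the machinery developed in Sections~\ref{sec:upper bounds} and~\ref{sec:choice of kappa}.
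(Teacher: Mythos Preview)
Your argument is correct, and for the first case it coincides exactly with the paper's proof. In the second case, however, you take an unnecessarily long detour: you bound $B_{t^\ast}^2+V_{t^\ast}$ directly in terms of $(t_{\mathfrak w}\wedge\lambda_D^{-1})^2V_{t_{\mathfrak w},\lambda}$, which forces you into the four subcases depending on the sign of $t^\ast-t_{\mathfrak w}$ for each of the bias and variance terms. But those subcases are precisely the content of Proposition~\ref{LemOracleProxyBal}, which already establishes $\E[\norm{\widehat\mu^{(t^\ast)}-\mu}^2]\le C_{t^\ast,t_{\mathfrak w}}\E[\norm{\widehat\mu^{(t_{\mathfrak w})}-\mu}^2]$ regardless of the regime. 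The paper therefore applies Corollary~\ref{CorStrongOrIneq} followed by Proposition~\ref{LemOracleProxyBal} \emph{uniformly} in both cases, reducing everything to a bound on $\E[\norm{\widehat\mu^{(t_{\mathfrak w})}-\mu}^2]=B_{t_{\mathfrak w}}^2+V_{t_{\mathfrak w}}$, and only then branches: case one is Proposition~\ref{PropOrProperty1}, while case two is the single line $B_{t_{\mathfrak w}}^2\le V_{t_{\mathfrak w}}\le c_V^{-1}(t_{\mathfrak w}\wedge\lambda_D^{-1})^2V_{t_{\mathfrak w},\lambda}$ from Lemma~\ref{LemVart}. Your route works, but it essentially re-proves Proposition~\ref{LemOracleProxyBal} inline; invoking it instead collapses your four subcases to one.
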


\begin{proof}
  We want to apply Corollary \ref{CorStrongOrIneq} (bounding the strong error of $\wh{\mu}^{(\tau)}$
  by that of $\wh{\mu}^{(t^\ast)}$) followed by Proposition \ref{LemOracleProxyBal}
  (from $\wh{\mu}^{(t^\ast)}$ to $\wh{\mu}^{(t_{\mathfrak w})}$) in order to bound $\E[\norm{\widehat\mu^{(\tau)}-\mu}^2]$ by
  $C_{\tau,t^\ast} C_{t^\ast,t_{\mathfrak w}}
  \E\big[\norm{\widehat\mu^{(t_{\mathfrak w})}-\mu}^2\big].
$

  We have that $C_\kappa\in[0,C_\circ)$ implies by the bounds in \eqref{EqWeakBalanceupper}-\eqref{EqWeakBalancelower} together with $V_{t_\ast,\lambda}\ge V_{t_\circ,\lambda}=C_\circ\sqrt D \delta^2$ that $B_{t^\ast,\lambda}^2\in [(1-C_\kappa C_\circ^{-1})V_{t^\ast,\lambda}\ind{t^\ast>t_\circ},(2C_{\ell^1,\ell^2}+C_\kappa C_\circ^{-1}-1)V_{t^\ast,\lambda}]$, as required for Corollary~\ref{CorStrongOrIneq}. Also the condition $\rho > \nu_+$
ensures via Proposition~\ref{prop:suffcond} that \ref{AssBias2}, \ref{AssMon}, \ref{AssVar}, \ref{AssLambdaL2} hold as required.

For the case $t_{\mathfrak w}  \le t_{\mathfrak s}$ we can  conclude the first inequality by the bound on $\E[\norm{\widehat\mu^{(t_{\mathfrak w})}-\mu}^2]$ in Proposition \ref{PropOrProperty1}. In the other case, $B_{t_{\mathfrak w}}^2\le V_{t_{\mathfrak w}}\le  c_V^{-1}(t_{\mathfrak w} \wedge \lambda_D^{-1})^2V_{t_{\mathfrak w},\lambda}$ is implied by Lemma \ref{LemVart}, using $\rho>1+\nu_+/2$, and the second result follows. It remains to trace back the dependencies of the constants involved.
\end{proof}

Let us specify this main result for  polynomially decaying singular values $\lambda_i$. Then we can write an oracle inequality which involves the oracle errors in weak and strong norm instead of the index $t_{\mathfrak w}$ itself.

\begin{corollary}\label{CorTauws}
Grant Assumption {\bf (R)} with $\rho>1$, \eqref{Eqkappa} with $C_\kappa\in[0,C_\circ)$ and $\lambda_i= c_Ai^{-1/\nu}, i=1,\ldots,D$, for $c_A>0$ and $0<\nu<\min(\rho,2\rho-2)$. Then
\[\E\big[\norm{\widehat\mu^{(\tau)}-\mu}^2\big]\le  C_{\tau,\mathfrak{ws}}\max\Big(\delta^{-4/\nu}\E\big[\norm{\widehat\mu^{(t_{\mathfrak w})}-\mu}_{\mathsf A}^2\big]^{1+2/\nu},\,\E\big[\norm{\widehat\mu^{(t_{\mathfrak s})}-\mu}^2\big]\Big)
\]
holds with a constant $C_{\tau,\mathfrak{ws}}$ depending only on $\rho,\beta_-,\beta_+,c_A,C_\circ,C_\kappa$.
\end{corollary}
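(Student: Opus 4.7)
The plan is to deduce Corollary \ref{CorTauws} from Theorem \ref{thm: oracle end} by converting, under the polynomial spectrum, the spurious factor $(t_{\mathfrak w}\wedge \lambda_D^{-1})^2$ appearing in the second case of that theorem into a power of the weak oracle error.

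First, I would check that the polynomial sequence $\lambda_i = c_A i^{-1/\nu}$ satisfies Assumption \textbf{(S)} with $\nu_-=\nu_+=\nu$ and a suitable integer $L$. The standing condition $0<\nu<\min(\rho,2\rho-2)$ then reads $\rho > \nu_+$ and $\rho > 1 + \nu_+/2$, which are exactly the hypotheses of Theorem \ref{thm: oracle end}. In the regime $t_{\mathfrak w}(\mu) \le t_{\mathfrak s}(\mu)$, the first inequality of Theorem \ref{thm: oracle end} already delivers $\E[\|\widehat\mu^{(\tau)}-\mu\|^2] \le C_{\tau,\mathfrak s}\E[\|\widehat\mu^{(t_{\mathfrak s})}-\mu\|^2]$, which is controlled by the second branch of the claimed maximum.

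It remains to handle the regime $t_{\mathfrak w}(\mu) \ge t_{\mathfrak s}(\mu)$, where Theorem \ref{thm: oracle end} provides
\[
\E\big[\|\widehat\mu^{(\tau)}-\mu\|^2\big] \le C_{\tau,\mathfrak w}\,(t_{\mathfrak w}\wedge\lambda_D^{-1})^2\, \E\big[\|\widehat\mu^{(t_{\mathfrak w})}-\mu\|_{\mathsf A}^2\big].
\]
The crucial step is a direct calculation showing that under the polynomial spectrum and the lower bound in Assumption \textbf{(R3)}, one has for all $t \ge t_\circ$
\[
V_{t,\lambda} \ge c_1\,\delta^2\,\bigl(c_A t \wedge D^{1/\nu}\bigr)^{\nu}
\]
with $c_1>0$ depending only on $\beta_-,\rho,\nu$. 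This is obtained by splitting the sum $V_{t,\lambda} = \delta^2 \sum_{i=1}^D (\gamma_i^{(t)})^2$ at the crossover index $i^\star = \lceil (c_A t)^\nu \rceil \wedge D$, where $t\lambda_i \approx 1$: the indices $i \le i^\star$ contribute at least $\beta_-^2 i^\star$, while for $i > i^\star$ the lower bound $\gamma_i^{(t)} \ge \beta_-(t\lambda_i)^\rho$ produces a convergent tail $\sum_{i>i^\star} (t c_A)^{2\rho} i^{-2\rho/\nu}$ of the same order, using $2\rho/\nu > 1$ (ensured by $\nu < 2\rho-2 < 2\rho$). The boundary case $c_A t \ge D^{1/\nu}$ is handled separately: all filter values are bounded below by $\beta_-$, giving $V_{t,\lambda}\ge \beta_-^2 D\delta^2$.

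Using $\E[\|\widehat\mu^{(t_{\mathfrak w})}-\mu\|_{\mathsf A}^2] \ge V_{t_{\mathfrak w},\lambda}$, the key estimate at $t=t_{\mathfrak w}$ reads $(c_A t_{\mathfrak w} \wedge D^{1/\nu})^\nu \le c_1^{-1} \delta^{-2} \E[\|\widehat\mu^{(t_{\mathfrak w})}-\mu\|_{\mathsf A}^2]$. Raising to the $2/\nu$th power, noting that $c_A(t_{\mathfrak w}\wedge\lambda_D^{-1}) = c_A t_{\mathfrak w} \wedge D^{1/\nu}$, and multiplying by $\E[\|\widehat\mu^{(t_{\mathfrak w})}-\mu\|_{\mathsf A}^2]$ yields
\[
(t_{\mathfrak w}\wedge\lambda_D^{-1})^2\, \E\big[\|\widehat\mu^{(t_{\mathfrak w})}-\mu\|_{\mathsf A}^2\big] \le c_2\,\delta^{-4/\nu}\, \E\big[\|\widehat\mu^{(t_{\mathfrak w})}-\mu\|_{\mathsf A}^2\big]^{1+2/\nu},
\]
producing the first branch of the claimed maximum. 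Taking the worse of the two cases gives the stated oracle bound, with $C_{\tau,\mathfrak{ws}}$ assembled from $C_{\tau,\mathfrak s},\,C_{\tau,\mathfrak w},\, c_1^{-2/\nu}$ and $c_A^{-2}$. The main obstacle is the bookkeeping in the variance lower bound: cleanly handling the two regimes $c_A t \lessgtr D^{1/\nu}$, absorbing the $\lceil\cdot\rceil$ rounding, and tracking how the constants depend on $\rho,\nu,\beta_-,c_A,C_\circ,C_\kappa$; everything else is algebraic manipulation.
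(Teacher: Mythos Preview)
Your approach is essentially the same as the paper's: both reduce to Theorem \ref{thm: oracle end} and then lower-bound $V_{t_{\mathfrak w},\lambda}$ by a constant times $\delta^2(t_{\mathfrak w}\wedge\lambda_D^{-1})^\nu$ via the filter lower bound in \ref{AssGComp}, raising to the $2/\nu$ power to absorb the factor $(t_{\mathfrak w}\wedge\lambda_D^{-1})^2$. The only notable difference is that the paper explicitly invokes Lemma~\ref{le:ineqlambda1} to guarantee $t_{\mathfrak w}\lambda_1\ge\zeta$ (so that at least one index satisfies $t_{\mathfrak w}\lambda_i\gtrsim 1$ and hence $\gamma_i^{(t_{\mathfrak w})}\ge\beta_-(\zeta c_A)^\rho\wedge\beta_-$), whereas your crossover argument tacitly assumes $c_A t_{\mathfrak w}\ge 1$; this is precisely the bookkeeping you flag, and it is why $c_1$ must also depend on $C_\circ$ and $\beta_+$ (through $\zeta$), not only on $\beta_-,\rho,\nu$ --- your tail sum over $i>i^\star$ is harmless but unnecessary.
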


\begin{proof}
  Since Assumption~{\bf (S)} holds with $\nu_+=\nu_-=\nu$, by Lemma~\ref{le:ineqlambda1} in the Appendix it  holds that $t_{\mathfrak w} \geq t_\circ \geq \zeta \lambda_1^{-1} \geq \zeta$ ($\zeta$ only depending on $C_\circ$ and $L$).
From $\delta^{-2}V_{t_{\mathfrak w},\lambda}= \sum_{i=1}^D (\gamma_i^{(t_{\mathfrak w})})^2$ we deduce via Assumption \ref{AssGComp}
\begin{align*}
  \delta^{-2}V_{t_{\mathfrak w},\lambda}
  &\ge \beta_-^2 ((\zeta c_A)^\rho \wedge 1)^2
    \#\{i: 1 \leq i \leq D ,
    \lambda_i\ge \zeta c_A /t_{\mathfrak w}\}\\
  &\ge \beta_-^2 ( (\zeta c_A)^\rho \wedge 1)^2
    (\floor{(\zeta^{-1} t_{\mathfrak w})^{\nu}} \wedge D)\\
  &  \ge \beta_-^2 ( (\zeta c_A)^{\rho} \wedge 1)^2 ( (\zeta^{-1} t_{\mathfrak w})^{\nu}/2 \wedge
    (c_A \lambda_D^{-1})^{\nu})\\
& \ge C_1
 (t_{\mathfrak w}\wedge \lambda_D^{-1})^{\nu}, \text{ with }
      C_1 = \frac{1}{2} \beta_-^2 \zeta^{2\rho} ( \zeta^{-1} \wedge c_A)^{\nu+2\rho}.
\end{align*}
Because of $\delta^{-4/\nu}\E[\norm{\widehat\mu^{(t_{\mathfrak w})}-\mu}_{\mathsf A}^2]^{2/\nu}\ge (\delta^{-2}V_{t_{\mathfrak w},\lambda})^{2/\nu}\ge C_1^{2/\nu}(t_{\mathfrak w}\wedge \lambda^{-1}_D)^{2}$, the  bound follows by combining the two inequalities from Theorem \ref{thm: oracle end}.
\end{proof}

A further consequence is a minimax rate-optimal bound over the Sobolev-type ellipsoids
\begin{equation}\label{EqHbeta}
H_d^\beta(R)=\Big\{\mu\in\R^D:\sum_{i=1}^D i^{2\beta/d}\mu_i^2\le R^2\Big\}, \;\;\beta\ge 0, R>0.
\end{equation}
In the case of Fourier coefficient sequences $(\mu_i)$ the class $H_d^\beta(R)$ corresponds to a ball of radius $R$ in a $d$-dimensional $L^2$-Sobolev space of regularity $\beta$.
At this stage the concept of qualification of the spectral regularisation method, {\em i.e.}, the filter sequence, enters.

\begin{corollary}\label{CorTauMinimax}
Grant Assumption {\bf (R)} with $\rho>1$, \eqref{Eqkappa} with $C_\kappa\in[0,C_\circ)$, $\lambda_i= c_Ai^{-p/d}$ for $c_A>0$ and $p/d>\min(\rho, (2\rho-2))^{-1}$ as well as
\[ 1-g(t,\lambda)\le C_q (t\lambda)^{-2q},\quad t\ge t_\circ,\,\lambda\in(0,1],\]
for some {\it qualification index} $q>0$. Then $\widehat\mu^{(\tau)}$ attains  the minimax-optimal rate over $H_d^\beta(R)$ from \eqref{EqHbeta}
\[\sup_{\mu\in H_d^\beta(R)}\E\big[\norm{\widehat\mu^{(\tau)}-\mu}^2\big]\lesssim R^2(R^{-1}\delta)^{4\beta/(2\beta+2p+d)},
\]
provided $2q-1\ge \beta/p$ and $(R/\delta)^{2d/(2\beta+2p+d)}\gtrsim  \sqrt D$ for $D\to\infty$ as $\delta\to 0$.
\end{corollary}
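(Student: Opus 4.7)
The strategy is to invoke Corollary~\ref{CorTauws} and then bound each of the two oracle error terms appearing in its max by routine bias/variance calculations that exploit the polynomial spectrum, the qualification condition and the source condition encoded in $H_d^\beta(R)$. With $\lambda_i=c_A i^{-p/d}$ Assumption~{\bf (S)} holds with $\nu_-=\nu_+=d/p$, and the hypothesis $p/d>\min(\rho,2\rho-2)^{-1}$ is precisely $d/p<\min(\rho,2\rho-2)$, so Corollary~\ref{CorTauws} applies with $\nu=d/p$. It yields
\[
\E\big[\|\widehat\mu^{(\tau)}-\mu\|^2\big]\lesssim \max\Big(\delta^{-4p/d}\,\E\big[\|\widehat\mu^{(t_{\mathfrak w})}-\mu\|_{\mathsf A}^2\big]^{1+2p/d},\;\E\big[\|\widehat\mu^{(t_{\mathfrak s})}-\mu\|^2\big]\Big).
\]

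The second step is to bound the two oracle errors uniformly over $\mu\in H_d^\beta(R)$. For the strong oracle I use the qualification bound $1-g(t,\lambda)\le C_q(t\lambda)^{-2q}$ together with $(1-g)^2\le 1$ interpolated to $(1-g(t,\lambda))^2\lesssim (t\lambda)^{-2\beta/p}$ (valid because $2q\ge 2q-1+1>\beta/p$). This gives
\[
B_t^2(\mu)\lesssim t^{-2\beta/p}\sum_{i=1}^D \lambda_i^{-2\beta/p}\mu_i^2\lesssim R^2 t^{-2\beta/p},
\]
since $\lambda_i^{-2\beta/p}\asymp i^{2\beta/d}$. Splitting the variance sum at the cut-off $i^\ast\asymp t^{d/p}$ (where $t\lambda_{i^\ast}\asymp 1$) and using $\gamma_i^{(t)}\le 1\wedge\beta_+(t\lambda_i)^\rho$ from \ref{AssGComp}, the condition $p/d>1/(2\rho-2)$ makes the tail sum converge and yields $V_t\lesssim \delta^2 t^{d/p+2}$. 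Balancing $R^2 t^{-2\beta/p}$ against $\delta^2 t^{d/p+2}$ gives $t_{\mathfrak s}\asymp (R/\delta)^{2p/(2\beta+2p+d)}$ and
\[
\E\big[\|\widehat\mu^{(t_{\mathfrak s})}-\mu\|^2\big]\lesssim R^2 (R^{-1}\delta)^{4\beta/(2\beta+2p+d)}.
\]

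For the weak oracle, the identity $\lambda_i^2(1-\gamma_i^{(t)})^2\le C_q^2(t\lambda_i)^{-2(\beta+p)/p}\lambda_i^{2}\cdot(t\lambda_i)^{2(\beta+p)/p-2\cdot \text{(used qualification)}}$\ldots more cleanly, interpolation with exponent $(\beta+p)/p=\beta/p+1\le 2q$ gives $(1-g(t,\lambda))^2\lambda^2\lesssim t^{-2(\beta+p)/p}\lambda^{-2\beta/p}$, hence
\[
B_{t,\lambda}^2(\mu)\lesssim t^{-2(\beta+p)/p}\sum_{i=1}^D i^{2\beta/d}\mu_i^2\lesssim R^2 t^{-2(\beta+p)/p},
\]
while an analogous split yields $V_{t,\lambda}\lesssim \delta^2 t^{d/p}$. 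Balancing gives $t_{\mathfrak w}\asymp (R/\delta)^{2p/(2\beta+2p+d)}$ and
\[
\E\big[\|\widehat\mu^{(t_{\mathfrak w})}-\mu\|_{\mathsf A}^2\big]\lesssim R^2(R^{-1}\delta)^{4(\beta+p)/(2\beta+2p+d)}.
\]
A short exponent check (setting $s=2\beta+2p+d$ and $\alpha=4p/d$) shows
\[
\delta^{-\alpha}\big[R^2(R^{-1}\delta)^{4(\beta+p)/s}\big]^{1+\alpha/2}=R^2(R^{-1}\delta)^{4\beta/s},
\]
so both branches of the max in Corollary~\ref{CorTauws} are of the announced order $R^2(R^{-1}\delta)^{4\beta/(2\beta+2p+d)}$.

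Finally, I verify the admissibility conditions. Corollary~\ref{CorTauws} and Theorem~\ref{thm: oracle end} require that $t_{\mathfrak w}\ge t_\circ$, which amounts to $V_{t_{\mathfrak w},\lambda}\ge V_{t_\circ,\lambda}=C_\circ \sqrt{D}\delta^2$; since $V_{t_{\mathfrak w},\lambda}\asymp \delta^2(R/\delta)^{2d/(2\beta+2p+d)}$, this is exactly the assumed side condition $(R/\delta)^{2d/(2\beta+2p+d)}\gtrsim \sqrt D$. The dominant technical point in this whole argument is matching the weak-oracle exponent to the claimed strong-norm rate through the amplification factor $\delta^{-4/\nu}$; the interpolation of the filter via the qualification index $q$ is routine given Assumption~{\bf (R)}, but the bookkeeping in exponents is where one must be careful that $2q-1\ge \beta/p$ (rather than just $2q\ge \beta/p$) is precisely what is needed, namely to cover the weak bias whose effective source condition has index $(\beta+p)/p$.
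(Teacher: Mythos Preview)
Your argument is correct and follows the same underlying strategy as the paper, with one organisational difference worth noting. The paper applies Theorem~\ref{thm: oracle end} directly rather than going through Corollary~\ref{CorTauws}: it bounds $t^2 B_{t,\lambda}^2(\mu)\lesssim R^2 t^{-2\beta/p}$ in one line (the factor $t^2$ raises the effective source index from $\beta/p$ to $\beta/p+1$, which is exactly where $2q-1\ge\beta/p$ enters), so that both $\E[\|\widehat\mu^{(t)}-\mu\|^2]$ and $t^2\E[\|\widehat\mu^{(t)}-\mu\|_{\mathsf A}^2]$ are bounded by the same expression $R^2 t^{-2\beta/p}+\delta^2 t^{2+d/p}$. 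This makes the two branches of Theorem~\ref{thm: oracle end} coincide at the common optimal $t\asymp(R/\delta)^{2p/(2\beta+2p+d)}$ and spares you the exponent verification $\delta^{-4p/d}r_w^{1+2p/d}=r$. Your route via Corollary~\ref{CorTauws} is equivalent but trades a short structural observation for an algebraic identity.

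One small imprecision: in your admissibility check you write that one must verify $t_{\mathfrak w}\ge t_\circ$, but this holds by definition (the balanced oracles are taken over $t\ge t_\circ$). What actually needs checking, and what the paper does, is that the rate-optimal comparison point $\bar t\asymp(R/\delta)^{2p/(2\beta+2p+d)}$ satisfies $\bar t\ge t_\circ$, so that the narrow-sense oracle property \eqref{EqBalNarOracle} lets you bound the error at $t_{\mathfrak w}$ (resp.\ $t_{\mathfrak s}$) by the error at $\bar t$. Your computation $V_{\bar t,\lambda}\asymp\delta^2(R/\delta)^{2d/(2\beta+2p+d)}\gtrsim\sqrt D\,\delta^2=V_{t_\circ,\lambda}$ is the right one; just attribute it to $\bar t$ rather than to $t_{\mathfrak w}$.
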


\begin{proof}
 A qualification $q\ge \beta/(2p)$ in combination with Assumption {\bf (R)} ensures for $\mu\in H_d^\beta(R)$, compare also Thm. 4.3 in Engl {\it et al.} \cite{EHN}:
\begin{align*}
B_t^2(\mu) &\le \sum_{i=1}^D C_q^2\min\big((t\lambda_i)^{-2q},1\big)^2\mu_i^2
\le C_q^2\sum_{i=1}^D (tc_Ai^{-p/d})^{-2\beta/p}\mu_i^2\\
&\le C_q^2c_A^{-2\beta/p}R^2t^{-2\beta/p}.
\end{align*}

Similarly, we deduce for $2q-1\ge \beta/p$:
\[ t^2B_{t,\lambda}^2(\mu)\le \sum_{i=1}^D C_q^2\min\big((t\lambda_i)^{-2q+1},t\lambda_i\big)^2\mu_i^2\le C_q^2c_A^{-2\beta/p}R^2t^{-2\beta/p}.
\]
Under Assumption \ref{AssGComp} the weak variance satisfies $V_{t,\lambda}\lesssim\delta^2\sum_i\min(\beta_+ (t\lambda_i)^{2\rho},1)
\lesssim \delta^2t^{d/p}$ provided $\lambda_i\thicksim i^{-p/d}$, $p>d/(2\rho)$. For $p>d/(2\rho-2)$ we obtain in a similar manner $V_t\lesssim \delta^2t^{2+d/p}$.

A rate-optimal choice of $t$ is thus of order $(R/\delta)^{2p/(2\beta+2p+d)}$ and  gives
\[\inf_{t\ge 0}\max\Big(\E[\norm{\widehat\mu^{(t)}-\mu}^2\big],\,t^2\E[\norm{\widehat\mu^{(t)}-\mu}_{\textsl A}^2\big]\Big)\lesssim R^2(R^{-1}\delta)^{4\beta/(2\beta+2p+d)},
\]
with a constant independent of $\mu$, $D$ and $\delta$. Using the last assumption in the corollary, we deduce $t\ge t_\circ$ for a rate-optimal choice of $t$ via
\[ V_{t,\lambda}\thicksim \delta^2(R/\delta)^{2d/(2\beta+2p+d)}\gtrsim \delta^2D^{1/2}\thicksim V_{t_\circ,\lambda}.\]
In view of the narrow sense oracle property \eqref{EqBalNarOracle}
of $t_{\mathfrak w}$ and equally of $t_{\mathfrak s}$ in strong norm, we thus conclude by applying Theorem \ref{thm: oracle end}.
\end{proof}

For the filters of Example \ref{detailed filters} we see that the Landweber and Showalter method have any qualification $q>0$ while standard Tikhonov regularisation has qualification $q=1$. The  statement is very much in the spirit of the results for the deterministic discrepancy principle, see {\it e.g.} Thm. 4.17 in Engl {\it et al.} \cite{EHN}, when interpreting $\kappa=D\delta^2$ as the squared noise level, {\it cf.} Hansen \cite{Ha}.  Note, however, that we do not require a slightly enlarged critical value and that the condition $(R/\delta)^{2d/(2\beta+2p+d)}\gtrsim  \sqrt D$, which  means that the minimax estimation rate for $\delta\to 0$ is not faster than $ D^{1/2+p/d}\delta^2$, indicates an intrinsic difference between deterministic and statistical inverse problems.

Cohen {\it et al.} \cite{CHR} argue that in the present setting a dimension $D_\delta\thicksim \delta^{-2d/(2p+d)}$ of the approximation space suffices to attain the optimal rates for all Sobolev (infinite) sequence spaces, i.e., the error introduced by the approximation space of dimension $D_\delta$ is of order smaller than
the minimax rate. With this choice of $D_\delta$, all optimal squared error rates of size $\delta$ and slower are attained by $\widehat\mu^{(\tau)}$; only excluded is the faster rate interval $[\delta^2,\delta)$, which corresponds to the high regularity $\beta>p+d/2$, see also the numerical results and the discussion on the two-stage procedure in Section \ref{discussion and numerics} below.

Theorem \ref{thm: oracle end} can also serve to deduce bounds on the Bayes risk of $\widehat\mu^{(\tau)}$ with respect to a prior  for the signals $\mu$. For concrete methods and general classes of priors   Bayesian oracle inequalities are thus conceivable similar to Bauer and Rei{\ss} \cite{BR2}, but in a different setup.

\section{More on the Landweber method} \label{discussion and numerics}

\subsection{A sufficient condition for the complete oracle property}\label{SecLandweberSignal}

For the concrete example of the Landweber method, let us investigate for which signals $\mu$ we have a true oracle inequality in the sense that in Theorem \ref{thm: oracle end} the first inequality applies with a universal constant $C_{\tau,{\mathfrak s}}>0$. Note first that, under Assumption {\bf (S)},
if we can ensure additionally that $V_{t_{\mathfrak w}} \le C_1 V_{t_{\mathfrak s}}$ for some $C_1>1$, then
Proposition \ref{PropOrProperty1} yields the more general  bound
\begin{equation}\label{EqMuBound}
\E\big[\norm{\widehat\mu^{({t_{\mathfrak w}})}-\mu}^2]\le \max\big((2\beta_+)^{2/\rho}C_V+4,C_1\big)\E\big[\norm{\widehat\mu^{(t_{\mathfrak s})}-\mu}^2\big].
\end{equation}
This generalisation is just due to $B_{t_{\mathfrak w}}(\mu) \le B_{t_{\mathfrak s}}(\mu)$ in the case $t_{\mathfrak w}> t_{\mathfrak s}$.

To establish $V_{t_{\mathfrak w}} \le C_1 V_{t_{\mathfrak s}}$, let us assume $D/L\in\N$ and consider  for some $c_\mu>0$ the  class of signals $\mu$
\begin{equation}\label{EqMuDecay}
{\mathcal C}:={\mathcal C}(L,c_\mu):=\Big\{\mu\in\R^D\,:\;\forall i=1,\ldots,D/L:\,\mu_{(L-1)i+1}^2+\cdots+\mu_{Li}^2\ge c_\mu\mu_{i}^2\Big\}.
\end{equation}
From the definition of the Landweber filters in Example \ref{detailed filters} we obtain
for $i\leq D/L$:
\[ \frac{(1-\gamma_{i}^{(t)})^2}{(1-\gamma_{Li}^{(t)})^2}=\Big(1-\frac{\lambda_i^2-\lambda_{Li}^2}{1-\lambda_{Li}^2}\Big)^{2t^2}
\le \exp\Big(-2t^2\lambda_i^2(1-L^{-2/\nu_+})\Big).
\]
By the decay of $C(r):=re^{-(1-L^{-2/\nu_+})r}$ for $r\ge r_0:=(1-L^{-2/\nu_+})^{-1}$, we can thus bound for $\mu\in{\mathcal C}$
\[ \sum_{\substack{i: t\lambda_i\ge r^{1/2},\\ i \leq D/L}}(1-\gamma_i^{(t)})^2(t\lambda_i)^2\mu_i^2\le
  c_\mu^{-1} C(r)^2 \sum_{\substack{i: t\lambda_i\ge r^{1/2},\\ i \leq D/L}}
  (1-\gamma_{Li}^{(t)})^2\big(\mu_{(L-1)i+1}^2+\cdots+\mu_{Li}^2\big).
\]
  Additionally, if $t \leq \lambda_D^{-1}$, then
  \[
  \sum_{i= D/L +1 }^D (1-\gamma_i^{(t)})^2(t\lambda_i)^2\mu_i^2\le L^{2/\nu_-}   \sum_{i= D/L +1 }^D (1-\gamma_i^{(t)})^2 \mu_i^2.
\]
This implies $t^2B_{t,\lambda}^2(\mu)\le ( r + c_\mu^{-1} C(r)^2  + L^{2/\nu_-} )B_t^2(\mu)$
provided $t \leq \lambda_D^{-1}$. Using $(t\wedge \lambda_D^{-1})^2V_{t,\lambda}\ge C_VV_t$ from Lemma \ref{LemVart}, the definition of the balanced oracles, and monotonicity,
we conclude in the case $t_{\mathfrak{w}} > t_{\mathfrak{s}}$ and $t_{\mathfrak s} \leq \lambda_D^{-1}$:
\begin{align*}
  V_{t_{\mathfrak w}}\le C_V^{-1} (t_{\mathfrak w}\wedge \lambda_D^{-1})^2 V_{t_{\mathfrak w},\lambda}
  &= C_V^{-1} (t_{\mathfrak w}\wedge \lambda_D^{-1})^2 B^2_{t_{\mathfrak w},\lambda}(\mu)\\
  &\le C_V^{-1} (t_{\mathfrak w}\wedge \lambda_D^{-1})^2 B^2_{t_{\mathfrak w}\wedge \lambda_D^{-1},\lambda}(\mu)\\
  & \le C_V^{-1}(c_\mu^{-1} C(r)^2+r  + L^{2/\nu_-})B_{t_{\mathfrak w}\wedge \lambda_D^{-1}}^2(\mu)\\
  & \le C_V^{-1}(c_\mu^{-1} C(r)^2+r  + L^{2/\nu_-})B_{t_{\mathfrak s}}^2(\mu)\\
 & =  C_V^{-1}(c_\mu^{-1} C(r)^2+r  + L^{2/\nu_-})V_{t_{\mathfrak s}}^2,
\end{align*}
providing the desired strong variance inequality. Finally, in the case
$t_{\mathfrak{w}} > t_{\mathfrak{s}} > \lambda_D^{-1}$ we have $\gamma_i^{(t_{\mathfrak s})} \geq \beta_-$ for
all $i=1,\ldots,D$  by Assumption~{\bf (R3)} and therefore
 $V_{t_\mathfrak{w}} \leq \beta_-^{-1} V_{t_\mathfrak{s}}$.

The value of $r$ may be optimised or we just take $r=r_0$ to define $C_1=C(r_0)$ and thus conclude from Theorem \ref{thm: oracle end} and \eqref{EqMuBound}:

\begin{corollary}\label{CorLWOrineq}
In the setting of Theorem \ref{thm: oracle end}, 
we have for Landweber iteration and all signals $\mu\in {\mathcal C}(L,c_\mu)$ from \eqref{EqMuDecay} the oracle inequality
\[\E\big[\norm{\widehat\mu^{(\tau)}-\mu}^2\big]\le C_{\tau,{\mathfrak s}}\E\big[\norm{\widehat\mu^{(t_{\mathfrak s})}-\mu}^2\big]
\]
with a constant $ C_{\tau,{\mathfrak s}}$ depending only on $\rho,\beta_-,\beta_+,L,\nu_-,\nu_+,c_\mu,C_{\kappa},C_{\circ}$.
\end{corollary}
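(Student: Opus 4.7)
The plan is to reduce the claim to ingredients already assembled in the excerpt, via the chain $\tau \to t^\ast \to t_{\mathfrak w} \to t_{\mathfrak s}$. Under the hypotheses of Theorem \ref{thm: oracle end} the combined application of Corollary \ref{CorStrongOrIneq} and Proposition \ref{LemOracleProxyBal} already yields $\E\big[\|\widehat\mu^{(\tau)}-\mu\|^2\big]\lesssim \E\big[\|\widehat\mu^{(t_{\mathfrak w})}-\mu\|^2\big]$ with a constant depending only on the admissible parameters. The task therefore reduces to bounding the strong error at $t_{\mathfrak w}$ by that at $t_{\mathfrak s}$ for all $\mu\in\mathcal C(L,c_\mu)$.

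For this last link I would split into two cases. If $t_{\mathfrak w}\le t_{\mathfrak s}$, Proposition \ref{PropOrProperty1} applies directly and the constant $C_{\mathfrak w,\mathfrak s}$ is universal. If $t_{\mathfrak w}>t_{\mathfrak s}$, monotonicity of $B_t^2(\mu)$ gives $B_{t_{\mathfrak w}}^2(\mu)\le B_{t_{\mathfrak s}}^2(\mu)$, so by \eqref{EqMuBound} it suffices to establish $V_{t_{\mathfrak w}}\le C_1 V_{t_{\mathfrak s}}$ with $C_1$ depending only on the allowed parameters. Here I would exploit the explicit Landweber filter $(1-\gamma_i^{(t)})^2=(1-\lambda_i^2)^{2t^2}$ together with Assumption {\bf (S)}: the ratio $(1-\gamma_i^{(t)})^2/(1-\gamma_{Li}^{(t)})^2$ is dominated by $\exp(-2t^2\lambda_i^2(1-L^{-2/\nu_+}))$, producing favourable decay in $t^2\lambda_i^2$. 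Splitting $\sum_i (1-\gamma_i^{(t)})^2(t\lambda_i)^2\mu_i^2$ according to $t\lambda_i\gtrless r^{1/2}$ and $i\lessgtr D/L$, and using the chunk condition $\mu_{(L-1)i+1}^2+\cdots+\mu_{Li}^2\ge c_\mu\mu_i^2$ from \eqref{EqMuDecay} to lump the large-$t\lambda_i$ indices into bias contributions at the coarser scale $Li$, yields $t^2B_{t,\lambda}^2(\mu)\le C\,B_t^2(\mu)$ for all $t\le \lambda_D^{-1}$. Combining this with the lower bound $V_{t,\lambda}\ge c_V(t\wedge\lambda_D^{-1})^{-2}V_t$ from Lemma \ref{LemVart} and the balancing identity $V_{t_{\mathfrak w},\lambda}=B_{t_{\mathfrak w},\lambda}^2(\mu)$ (valid since $t_{\mathfrak w}>t_\circ$), together with the monotonicity chain $B_{t_{\mathfrak w}\wedge\lambda_D^{-1}}(\mu)\le B_{t_{\mathfrak s}}(\mu)$, produces $V_{t_{\mathfrak w}}\le C_1 V_{t_{\mathfrak s}}$. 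The residual case $t_{\mathfrak s}>\lambda_D^{-1}$ is simpler because Assumption {\bf (R3)} then forces $\gamma_i^{(t_{\mathfrak s})}\ge\beta_-$ uniformly, giving the cruder bound $V_{t_{\mathfrak w}}\le \beta_-^{-2} V_{t_{\mathfrak s}}$.

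The main technical point is the tuning of the cutoff $r\ge r_0:=(1-L^{-2/\nu_+})^{-1}$: the prefactor $r+c_\mu^{-1}C(r)^2+L^{2/\nu_-}$, with $C(r):=re^{-(1-L^{-2/\nu_+})r}$, must be kept bounded in terms of the allowed parameters, which forces $r$ to stay of order $r_0$ so that the exponential decay of $C(r)$ controls the signal-dependent factor $c_\mu^{-1}$. The choice $r=r_0$ suffices, gives an explicit (though not sharp) admissible constant, and tracing the dependencies through the chain above yields a $C_{\tau,\mathfrak s}$ depending only on $\rho,\beta_-,\beta_+,L,\nu_-,\nu_+,c_\mu,C_\kappa,C_\circ$, as asserted.
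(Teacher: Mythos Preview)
Your proposal is correct and follows essentially the same route as the paper. The paper's argument in Section~\ref{SecLandweberSignal} establishes precisely the variance comparison $V_{t_{\mathfrak w}}\le C_1 V_{t_{\mathfrak s}}$ for $\mu\in\mathcal C(L,c_\mu)$ via the Landweber filter-ratio bound, the class condition~\eqref{EqMuDecay}, and Lemma~\ref{LemVart}, then concludes by invoking Theorem~\ref{thm: oracle end} together with the generalised oracle bound~\eqref{EqMuBound}; your chain $\tau\to t^\ast\to t_{\mathfrak w}\to t_{\mathfrak s}$, the case split on $t_{\mathfrak w}\lessgtr t_{\mathfrak s}$ and on $t_{\mathfrak s}\lessgtr\lambda_D^{-1}$, and the cutoff choice $r=r_0$ match this exactly (your $\beta_-^{-2}$ in the trivial case $t_{\mathfrak s}>\lambda_D^{-1}$ is in fact the correct exponent).
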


The class $\mathcal C$ in combination with  Counterexample \ref{CounterEx}
illustrates that the early stopping rule $\tau$ may exhibit bad strong norm adaptation  only if the signal has a significantly stronger strength in the lower than in the higher SVD coefficients. Interestingly, also for noise level-free posterior regularisation methods, these kinds of signals must be excluded in deterministic inverse problems, {\it cf.} Prop. 4.6 in Kindermann \cite{Ki}. Let us emphasize, however, that this discussion only concerns the individual oracle approach, whereas the minimax optimality under standard polynomial source conditions is satisfied by Corollary \ref{CorTauMinimax}.

\subsection{Numerical examples}

Consider the moderately ill-posed case $\lambda_i=i^{-1/2}$ (as, {\it e.g.}, for the Radon transform) with noise level $\delta=0.01$ and dimension $D=10\,000$. After 51 Landweber iterations the weak variance attains the level $\sqrt{2 D}\delta^2$, which is the dominating term in \eqref{oracleexact} and corresponds to $C_\circ=\sqrt 2$ in the choice of $t_\circ$ (by abuse of notation, indices $t$ denote numbers of iterations in this subsection).

\begin{figure}[t]
\includegraphics[height=0.2\textheight]{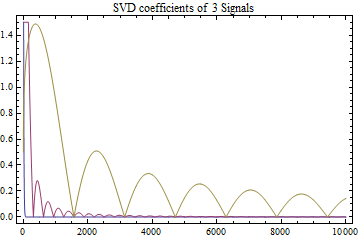} \hspace{5mm}
\includegraphics[height=0.2\textheight]{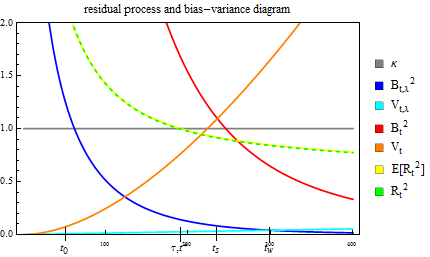}
\caption{Left: SVD representation of a super-smooth (blue), a smooth (red) and a rough (olive) signal. Right: residuals, weak/strong bias and variance for $t\in \{0,\ldots,400\}$ Landweber iterations and the smooth signal.
}\label{Fig2}

\end{figure}

In Figure \ref{Fig2} (left) we see the SVD representation of three signals: a very smooth signal $\mu(1)$, a smooth signal $\mu(2)$ and a rough signal $\mu(3)$, the attributes coming from the interpretation via the decay of Fourier coefficients.  The weakly balanced oracle indices $t_{\mathfrak w}$ are $(42,299,1074)$. So, we stop before $t_\circ$ in the super-smooth case and expect a high variability of $\tau$ around $t^\ast$. The strong indices $t_{\mathfrak s}$ are $(29,235,1185)$.  For the smooth and super-smooth case with rapid decay and oscillations the signals lie in the class $\mathcal C$ from \eqref{EqMuDecay} for relatively small $c_\mu>0$ only, which explains why here $t_{\mathfrak w}>t_{\mathfrak s}$ occurs.

We choose $\kappa=D\delta^2=1.0$ although the ratio $\sum_i\gamma_i^{(t)}/\sum_i(\gamma_i^{(t)})^2$, defining the constant $C_{\ell^1,\ell^2}$, is about $1.05$ at $t_\circ$. The relationship \eqref{EqWeakBalanceupper} therefore indicates that a smaller value of $\kappa$ might be beneficial, which indeed in practice yields much better results, especially for the smooth and rough cases. In any case, in this simulation we use $\kappa=D\delta^2$ off the shelf and we compute the stopping rule $\tau$ starting at $t_0=0$ to illustrate the effects when very early stopping is recommended.

For the smooth signal $\mu(2)$ Figure \ref{Fig2} (right) displays  squared bias, variance and residuals (for one realisation) as a function of the number of iterations  and indicates the stopping indices (the  bias and variance functions are  rescaled to fit into the picture).  We see that the residual and its expected value are hardly distinguishable and thus $\tau$ is very close to the oracle proxy $t^\ast$. While our error analysis uses the weak norm oracle inequality to establish the strong norm bound, here the strongly balanced oracle  $t_{\mathfrak s}$ is closer to $\tau$ than the weak counterpart $t_{\mathfrak w}$. The effects will become obvious in the error plots below.

Relative to the target $t_{\mathfrak w}$, Figure \ref{Fig3} (left) displays box plots (a box representing the inner quartile range, whiskers the main support, points outliers and a horizontal bar the mean) for the relative number of iterations $\tau/t_{\mathfrak w}$ and $\tau/t_{\mathfrak s}$, respectively, in 1000 Monte Carlo repetitions and for the three signals.
We see the high variability  for the super-smooth signal and the fact that for all three signals $\tau$ usually stops too early. For the weak norm this can be cured by choosing $\kappa$ smaller taking into account the size of  $C_{\ell^1,\ell^2}$, as discussed above (independently of the unknown signal). Relative to the strongly balanced oracle $t_{\mathfrak s}$, however, the stopping rule $\tau$ comes closer to the oracle for the super-smooth and smooth signals and moves further away for the rough signal because of $t_{\mathfrak w}>t_{\mathfrak s}$ and  $t_{\mathfrak w}<t_{\mathfrak s}$, respectively.

\begin{figure}[t]
\includegraphics[width=0.48\textwidth]{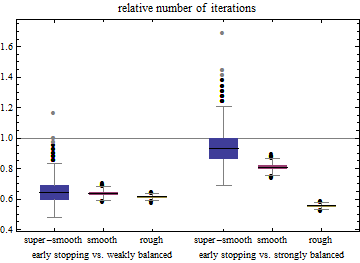}
\includegraphics[width=0.48\textwidth]{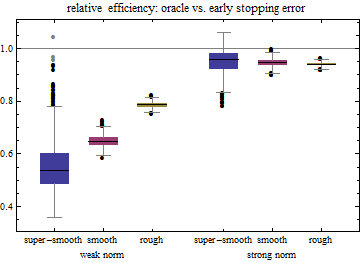}
\caption{Left: number of Landweber iterations for $\tau$ divided by the balanced oracle numbers.
Right:  oracle RMSE errors divided by RMSE of $\hat\mu^{(\tau)}$.}\label{Fig3}
\end{figure}

In Figure \ref{Fig3} (right) the box plots show for the same Monte Carlo run the relative errors $\min_{t\ge 0}\E[\norm{\widehat\mu^{(t)}-\mu}_{\mathsf A}^2]^{1/2}/\E[\norm{\widehat\mu^{(\tau)}-\mu}_{\mathsf A}^2]^{1/2}$ (weak norm) and $\min_{t\ge 0}\E[\norm{\widehat\mu^{(t)}-\mu}^2]^{1/2}/\E[\norm{\widehat\mu^{(\tau)}-\mu}^2]^{1/2}$ (strong norm) such that higher values indicate better performance. As expected from the stopping rule results, the variability is largest for the super-smooth case, and as predicted by theory, the relative efficiency in weak norm is best for the rough signal. In strong norm all three efficiencies become much better. Due to $t_{\mathfrak s}<t_{\mathfrak w}$ this was to be expected for the super-smooth and smooth signals, {\it cf.} also Figure \ref{Fig2} (right). For the rough signal this is quite surprising because the stopping rule $\tau$ usually stops at about half the value of $t_{\mathfrak s}$. This is explained by a roughly linear decay of $t\mapsto B_t^2$ around $t_{\mathfrak s}$ and a strong growth in $t\mapsto V_t$ such that the total error $t\mapsto B_t^2+V_t$ is quite flat to the left of $t_{\mathfrak s}$. Note that in weak norm $t\mapsto B_{t,\lambda}^2$ decays much faster and the 'flatness' disappears. This is the common phenomenon that the {\it relative} error due to a data-driven parameter choice often becomes smaller for ill-posed than for well-posed settings.

Further unreported simulations confirm these findings, in particular the relative error due to residual-based stopping remains small (rarely larger than $2$, {\it i.e.} relative efficiency usually larger than $0.5$). Only for super-smooth signals, where we ought to stop before $t_\circ$, the variability may become harmful.

As a practical procedure, we propose to run the iterations always until $t_\circ$ (51 iterates here) and if the stopping rule $\tau$ tells us not to continue, then we apply a standard model selection procedure to choose among the $t_\circ$ first iterates. For the truncated SVD case this two-step procedure can be proven to be rate-optimal for all regularities $\beta$, not only within the adaptation interval, see \cite{ourpapercutoff}.
For our general spectral regularisation approach the lack of a complete oracle inequality in strong norm is due to potentially stopping
{\em later} than at $t_{\mathfrak s}$. Hence, a natural suggestion would be to apply a second
model selection step
to select among $\widehat\mu^{(0)},\ldots,\widehat\mu^{(\tau)}$ for any outcome of $\tau$. The performance of this general two-step approach needs to be studied further, but seems very promising from the perspective of both, statistical errors and numerical complexity.


\section{Appendix}  \label{Appendix}

\subsection{Proof of Proposition \ref{prop:suffcond}} \label{proof:prop:suffcond}

We start with an important result  for a nonincreasing sequence satisfying {\bf {(S)}}. This is related to comparisons between a function and its power integrals,  also known as {\em Karamata's one-sided relations}.

\begin{lemma}[One-sided Karamata relations] \label{One-sided Karamata relations}
Suppose Assumption {\bf (S)} is satisfied. Then for any $p>0$ and $k \ge 1$ we have
\begin{align}
  \label{eq:karinfnegpbis}
  \frac{\sum_{j\leq k} \lambda_j^{-p}}{k\lambda_{k}^{-p}} & \ge L^{-p/\nu_-}(1-L^{-1}) &&\text{for $p>0$ and $1\le k\le D$,}\\
  \label{eq:karinfpospbis}
  \frac{\sum_{j=k+1}^{Lk} \lambda_j^{p}}{k \lambda_k^{p}} &\ge (L-1) L^{-p/\nu_-} &&\text{for $p>0$ and $1\le k\le \lfloor D/L \rfloor $,}\\
    \label{eq:karsupbis}
    \frac{\sum_{j=k}^D \lambda_j^{p}}{k \lambda_k^{p}}& \le \frac{L-1}{1-L^{1-p/\nu_+}}&& \text{for $p>\nu_+$ and $1\le k\le D$.}
\end{align}
\end{lemma}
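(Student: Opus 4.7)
The plan is to treat the three inequalities separately, using the lower bound in Assumption {\bf (S)} for \eqref{eq:karinfnegpbis}--\eqref{eq:karinfpospbis} and the upper bound for \eqref{eq:karsupbis}. The key idea throughout is to compare sums to geometric blocks of size $L$, inside which $\lambda$ varies by at most a factor $L^{1/\nu_\pm}$.

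For \eqref{eq:karinfpospbis} I would start with the cleanest case: the interval of indices $(k,Lk]$ has length $(L-1)k$, and for every $j$ in this range one has $\lambda_j\ge L^{-1/\nu_-}\lambda_k$. Indeed, if $j\ge L$ one applies Assumption {\bf (S)} directly via $\lceil j/L\rceil\le k$ to get $\lambda_j\ge L^{-1/\nu_-}\lambda_{\lceil j/L\rceil}\ge L^{-1/\nu_-}\lambda_k$, while if $j<L$ (so $k<L$ too) one chains $\lambda_j\ge\lambda_L\ge L^{-1/\nu_-}\lambda_1\ge L^{-1/\nu_-}\lambda_k$. Summing $(L-1)k$ copies yields the bound.

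For \eqref{eq:karinfnegpbis} I would split according to whether $k\ge L$ or $k<L$. When $k\ge L$, restrict the sum to $j\in[\lceil k/L\rceil,k]$: each term satisfies $\lambda_j^{-p}\ge\lambda_{\lceil k/L\rceil}^{-p}\ge L^{-p/\nu_-}\lambda_k^{-p}$ by Assumption {\bf (S)}, and the number of indices is at least $k-\lceil k/L\rceil+1\ge k(L-1)/L$. When $k<L$, use $\lambda_j\le\lambda_1\le L^{1/\nu_-}\lambda_L\le L^{1/\nu_-}\lambda_k$ for every $j\le k$, so every single term already satisfies $\lambda_j^{-p}\ge L^{-p/\nu_-}\lambda_k^{-p}$, and summing $k$ such terms gives even $k L^{-p/\nu_-}\lambda_k^{-p}$, which exceeds the claimed bound.

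For \eqref{eq:karsupbis}, which is the only place the hypothesis $p>\nu_+$ is needed, I would decompose $\{k,\ldots,D\}$ into the geometric blocks $B_m=[L^m k,L^{m+1}k)$ for $m=0,1,\ldots$ Iterating the upper part of Assumption {\bf (S)} gives $\lambda_{L^m k}\le L^{-m/\nu_+}\lambda_k$, so on $B_m$ each term is bounded by $L^{-mp/\nu_+}\lambda_k^p$; the cardinality of $B_m$ is $(L-1)L^m k$, yielding a block contribution of $(L-1)k\lambda_k^p\cdot L^{m(1-p/\nu_+)}$. Summing the resulting geometric series (convergent exactly when $p>\nu_+$) produces the announced constant $(L-1)/(1-L^{1-p/\nu_+})$; extending the sum past $D$ only enlarges the bound, so this is harmless.

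The main obstacle is not any single step but bookkeeping the edge cases (small $k$ in \eqref{eq:karinfnegpbis}, the possibility that the last block in \eqref{eq:karsupbis} is truncated at $D$, and the mixed regime $j<L$ in \eqref{eq:karinfpospbis}) so that the constants announced in the statement are attained without any implicit restriction $k\ge L$.
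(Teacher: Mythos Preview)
Your proposal is correct and follows essentially the same approach as the paper: lower bounds via restriction to a single block of $\sim (1-L^{-1})k$ or $(L-1)k$ indices and Assumption {\bf (S)} for the ratio, and the upper bound via geometric decomposition into blocks $[L^m k, L^{m+1}k)$ summed as a convergent series. The paper's treatment of \eqref{eq:karinfpospbis} is marginally cleaner (it uses monotonicity $\lambda_j\ge\lambda_{Lk}$ and then a single application of {\bf (S)} at the index $Lk$, avoiding your term-by-term case split on $j\ge L$ versus $j<L$), while conversely you are more explicit than the paper about the small-$k$ edge cases in \eqref{eq:karinfnegpbis}.
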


\begin{proof}
For inequality \eqref{eq:karinfpospbis}, write:
\[\sum_{j=k+1}^{Lk} \lambda_j^{p}    \ge  (L-1) k \lambda_{Lk}^p \ge L^{-p/\nu_-} (L-1) k  \lambda_k^p.
       \]
       We turn to \eqref{eq:karinfnegpbis}, for which we write
\begin{multline*}
    \sum_{j \le k} \lambda_j^{-p}
    \ge      \sum_{j=\lceil k/L \rceil}^{k} \lambda_j^{-p}
    \ge     \Big(k +1 - \Big\lceil \frac{k}{L} \Big\rceil \Big) \lambda_{\lceil k/L \rceil}^{-p} \ge (1-L^{-1})k L^{-p/\nu_-} \lambda_{k}^{-p}.
\end{multline*}
Finally, for \eqref{eq:karsupbis}, we have $\frac{\lambda_{Lk}}{\lambda_k} \le L^{-1/\nu_+}$ and
\begin{multline*}
    \sum_{j= k}^D \lambda_j^{p}
    = \sum_{\ell= 0}^{\lceil D/L \rceil} \sum_{i=kL^\ell}^{(kL^{\ell+1}-1)\wedge D} \lambda_i^p
     \le   \sum_{\ell = 0}^{\lceil D/L \rceil} k L^{\ell}(L-1) \lambda^p_{k L^{\ell}}\\
     \le  k  (L-1) \lambda_k^p \sum_{\ell \ge 0}  L^{\ell(1-p/\nu_+)}=\tfrac{(L-1)k\lambda_k^p}{1-L^{1-p/\nu_+}}.
 \end{multline*}
 \end{proof}

We will need the following auxiliary result:
\begin{lemma}\label{le:ineqlambda1}
    Suppose Assumptions {\bf (S)} and {\bf (R)} are satisfied
  with $2\rho>\nu_+$,
  then the  constant $t_\circ$ defined via \eqref{Eqt0}  is such that
  \begin{equation}
    \label{eq:zetafactor}
    t_\circ \ge \wt{\zeta} \lambda_{1}^{-1}, \qquad\text{ with }\qquad
    \wt{\zeta}:=\bigg( \frac{C_\circ \sqrt{D}(1-L^{1-2\rho/\nu_+})}{(L-1)\beta_+}
    \bigg)^{{1}/{2\rho}}.
  \end{equation}
  It follows, for any $k=1,\ldots,D$:
  \begin{equation}
    \label{eq:aux2}
    \delta^2 \lambda_k^{-2} \le C k^{2/\nu_-} V_{t_\circ}, \qquad
    \text{ with } C= \beta_-^{-2} L^{2/\nu_-} (1 \wedge \wt{\zeta})^{-2\rho}.
  \end{equation}
\end{lemma}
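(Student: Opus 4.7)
The plan is to prove the two inequalities sequentially, with the first feeding into the second.

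For the lower bound $t_\circ \ge \wt{\zeta}\lambda_1^{-1}$, I will exploit the fact that $t \mapsto V_{t,\lambda}$ is continuous and nondecreasing (which follows from $\gamma_i^{(t)}$ being nondecreasing in $t$). So it suffices to show that at $t = \wt{\zeta}\lambda_1^{-1}$ we still have $V_{t,\lambda} \le C_\circ \sqrt{D}\delta^2$. Starting from $(\gamma_i^{(t)})^2 \le \beta_+^2(t\lambda_i)^{2\rho}$ (which follows from Assumption \ref{AssGComp}), the bound
\[ V_{t,\lambda}/\delta^2 \le \beta_+^2 t^{2\rho}\sum_{i=1}^D \lambda_i^{2\rho} \]
holds. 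Now Karamata's relation \eqref{eq:karsupbis} applied with $k=1$ and $p = 2\rho$ (which requires exactly $2\rho > \nu_+$) yields $\sum_{i=1}^D \lambda_i^{2\rho} \le \frac{L-1}{1-L^{1-2\rho/\nu_+}}\lambda_1^{2\rho}$. Combining and solving the resulting inequality $V_{t,\lambda} \le C_\circ \sqrt D\delta^2$ for $t$ (at $t\lambda_1 = \wt\zeta$) gives the bound in the form stated, possibly absorbing a factor of $\beta_+$ into the bound.

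For the second inequality $\delta^2\lambda_k^{-2} \le Ck^{2/\nu_-}V_{t_\circ}$, I will reduce the estimate to the single index $k=1$ and then use the spectral condition \textbf{(S)} to propagate the bound to all $k$. To lower bound $V_{t_\circ}$, observe that by Assumption \ref{AssGComp}, $\gamma_1^{(t_\circ)} \ge \beta_- \min((t_\circ\lambda_1)^\rho, 1)$, so isolating just the first term in the sum defining $V_{t_\circ}$ gives
\[ V_{t_\circ} \ge \delta^2(\gamma_1^{(t_\circ)})^2\lambda_1^{-2} \ge \beta_-^2 \min((t_\circ\lambda_1)^{2\rho}, 1)\,\delta^2\lambda_1^{-2}. \]
Now apply the lower bound from the first part: $t_\circ\lambda_1 \ge \wt\zeta$, so $\min((t_\circ\lambda_1)^{2\rho},1) \ge (1 \wedge \wt\zeta)^{2\rho}$. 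This yields $\delta^2\lambda_1^{-2} \le \beta_-^{-2}(1\wedge\wt\zeta)^{-2\rho} V_{t_\circ}$.

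It remains to transfer the bound at $k=1$ to arbitrary $k \le D$. By iterating Assumption \textbf{(S)}, for $k$ with $L^{m-1} < k \le L^m$ we have $\lambda_1/\lambda_k \le L^{m/\nu_-} \le (Lk)^{1/\nu_-}$ (the first inequality from telescoping the ratio $\lambda_{\lceil j/L\rceil}/\lambda_j$), hence $\lambda_k^{-2} \le L^{2/\nu_-}k^{2/\nu_-}\lambda_1^{-2}$. Multiplying by $\delta^2$ and combining with the previous display delivers the announced bound with $C = \beta_-^{-2}L^{2/\nu_-}(1\wedge\wt\zeta)^{-2\rho}$.

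The main point of care is Step 1: one needs to verify that the Karamata bound is sharp enough so that $V_{\wt\zeta\lambda_1^{-1},\lambda}$ remains below the threshold $C_\circ\sqrt D\delta^2$ under only the weak condition $2\rho > \nu_+$. Everything else is essentially bookkeeping, and Step 2 is a clean two-line argument once Step 1 supplies the handle $t_\circ\lambda_1 \ge \wt\zeta$.
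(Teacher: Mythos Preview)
Your proposal is correct and follows essentially the same route as the paper's proof: for \eqref{eq:zetafactor}, bound $V_{t,\lambda}$ via \ref{AssGComp} and the Karamata relation \eqref{eq:karsupbis} (with $p=2\rho$, $k=1$), then invert; for \eqref{eq:aux2}, lower-bound $V_{t_\circ}$ by its first summand, apply \ref{AssGComp} together with the first part, and propagate to general $k$ by iterating Assumption~{\bf (S)}. The only cosmetic difference is that the paper applies the upper bound directly at $t=t_\circ$ (using $V_{t_\circ,\lambda}=C_\circ\sqrt D\delta^2$) and solves for $t_\circ$, rather than evaluating at the candidate point $t=\wt\zeta\lambda_1^{-1}$; this yields the stated $\wt\zeta$ with a single factor $\beta_+$ in the denominator, which is exactly the discrepancy you already flagged.
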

\begin{proof}
Using Assumption~\ref{AssGComp} and then \eqref{eq:karsupbis} gives
\[
  V_{t,\lambda} = \delta^2 \sum_{i=1}^D (\gamma_i^{(t)})^2
  \le \delta^2 \beta_+ t^{2\rho} \sum_{i=1}^D \lambda_i^{2\rho}
  \le \delta^2 \beta_+ t^{2\rho} \lambda_1^{2\rho} \frac{L-1}{1-L^{1-2\rho/\nu_+}}.
 \]
 By definition, $V_{t_\circ,\lambda}=C_\circ D^{1/2}\delta^2$ holds and \eqref{eq:zetafactor} follows.
 We then have as a consequence
 \[
 V_{t_\circ} \ge \delta^2 (\gamma_1^{(t_\circ)})^2 \lambda_1^{-2}
\ge \beta_-^2 \min(1,\wt{\zeta})^{2\rho}  \delta^2 \lambda_1^{-2} .
\]
Finally, for any $k=1,\ldots,D$, put $\ell := \lceil \log k/\log L \rceil$, then \eqref{Critbis} entails (notice $\lceil \lceil k/L^{i} \rceil/L \rceil
= \lceil k/L^{i+1} \rceil$ in the implicit iterations for the first inequality):
\[
\lambda^{-2}_{k} \le
L^{2\ell/\nu_-} \lambda_{\lceil k/L^{\ell} \rceil}^{-2}
= \lambda_1^{-2} L^{2\ell/\nu_-}
\le \lambda_1^{-2} (Lk)^{2/\nu_-}.
\]
Combining the two last displays yields \eqref{eq:aux2}.
\end{proof}

\begin{proof}[Proof of Proposition \ref{prop:suffcond}]
The monotonicity, continuity and limiting behaviour of $g(t,\lambda)$ in $t=0,\, t\rightarrow \infty$ for fixed $\lambda$
required from \ref{AssGMon} ensure the basic requirements on the filter sequence
(namely $t\mapsto\gamma_i^{(t)}$ continuous, $\gamma_i^{(0)}=0$ and $\gamma_i^{(t)}\uparrow 1$ as $t\to\infty$ ).
Since the spectral sequence $(\lambda_i)_{1\leq i \leq D}$ is nonincreasing, the monotonicity in $\lambda$ of $g(t,\lambda)$
ensures the validity of \ref{AssMon}. Similarly, Assumption \ref{AssGMonRatio} transparently ensures \ref{AssBias2}.

We turn to checking \ref{AssLambdaL2}. For this we use \eqref{eq:karinfnegpbis} with $p=2$, yielding
\[
    \forall k \ge 1\,:\,
    \frac{\sum_{j\le k} \lambda_j^{-2}}{k\lambda_{k}^{-2}}\ge    L^{-2/\nu_-}(1-L^{-1}) =: c_\lambda .
\]
We now check \ref{Assl1l2} for $t\ge t_\circ$.
Denote $\zeta:=\min(\wt{\zeta},1)$, where $\wt{\zeta}$ is from \eqref{eq:zetafactor}.
Introduce $j_t^* := \min\set{k: 1 \leq k \leq D, \lambda_k < \zeta/t}\cup \set{D+1}$.
Property $t \ge t_\circ$ and \eqref{eq:zetafactor} imply $j_t^*\ge 2$.
Assumption \ref{AssGComp} and \eqref{eq:specreg} ensure $\gamma_j^{(t)} \in [\zeta^\rho\beta_-,1]$
for all $j< j_t^*$, so that
\begin{equation}
  \label{eq:decl1l2}
  \sum_{i=1}^D \gamma_i^{(t)}  = \sum_{i=1}^{j^*_t-1} \gamma_i^{(t)} + \sum_{i=j^*_t}^D \gamma_i^{(t)} \le \zeta^{-\rho}\beta_-^{-1} \sum_{i=1}^{j^*_t-1} \big(\gamma_i^{(t)}\big)^{2} + \beta_+ t^\rho \sum_{i=j^*_t}^D \lambda_i^\rho.
\end{equation}
To control the second term (present only if $j^*_t \leq D$),
we note that, since $\rho > \nu_+$, \eqref{eq:karsupbis} yields
\[
\sum_{j\ge k} \lambda_j^\rho \le C k\lambda_k^\rho, \qquad \text{ where } C:= \frac{L-1}{1-L^{1-p/\nu_+}}.
\]
We apply this relation to $k=j^\ast_t$.  We deduce from $\lambda_{j_t^\ast} < \zeta/t$ and
$j_t^\ast \ge 2$
\[
t^\rho \sum_{i=j^*_t}^D \lambda_i^\rho \le C j^*_t t^\rho \lambda_{j^*_t}^\rho
\le 2C \zeta^\rho (j^*_t -1) \le 2C \zeta^{-\rho} \beta_-^{-2} \sum_{i=1}^{j^*_t-1}  (\gamma_i^{(t)})^2.
\]
Plugging this into \eqref{eq:decl1l2} yields \ref{Assl1l2} with $C_{\ell_1,\ell_2}= \zeta^{-\rho}(\beta_-^{-1} + 2C\beta_+ \beta_-^{-2})$.

We finally turn to \ref{AssVar}.
Without loss of generality we can assume $\beta_+ \ge 1$ in \eqref{AssGComp}.
As for all $A,B>0$
\begin{equation}
  \frac{1}{2}(A+B)^{-1} \le \min(A^{-1},B^{-1}) \le (A+B)^{-1} ,
    \label{eq:minineq}
\end{equation}
it follows that condition  \eqref{AssGComp} implies for all $t\ge t_\circ$:
\begin{equation}
\label{eq:condupperlower3}
\beta_-(1+(t\lambda)^{-\rho})^{-1} \le g(t,\lambda) \le 2(1+\beta_+^{-1} (t\lambda)^{-\rho})^{-1} \le 2 \beta_+
(1+(t\lambda)^{-\rho})^{-1}.
\end{equation}
Denote $h(t):=(1+t^{-\rho})^{-1}$; the above implies together with \eqref{eq:specreg}
that $\frac{\gamma_i^{(t)}}{h(t\lambda_i)} \in [\beta_-,2\beta_+]$. We infer that for $t\ge t' \ge t_\circ $:
\[
  \frac{V_{t}}{V_{t'}} = \frac{\sum_{i=1}^D (\gamma_{i}^{(t)} \lambda_i^{-1})^2}{\sum_{i=1}^D (\gamma_i^{(t')} \lambda_i^{-1})^2} \le \frac{4\beta_+^2}{\beta_-^2} \frac{\sum_{i=1}^D \paren{\lambda_i^{-1} h(t\lambda_i)}^2}{\sum_{i=1}^D \paren{\lambda_i^{-1} h(t'\lambda_i)}^2}
  =: \frac{4\beta_+^2}{\beta_-^2} \frac{H(t)}{H(t')},
  \]
while
  \[
  \frac{V_{t,\lambda}}{V_{t',\lambda}} = \frac{\sum_{i=1}^D (\gamma_{i}^{(t)})^2}{\sum_{i=1}^D (\gamma_i^{(t')})^2} \ge \frac{\beta_-^2}{4\beta_+^2} \frac{\sum_{i=1}^D h(t\lambda_i)^2}{\sum_{i=1}^D  h(t'\lambda_i)^2}
  =: \frac{\beta_-^2}{4\beta_+^2} \frac{G(t)}{G(t')}.
  \]
The desired bound \ref{AssVar} with $C_{V,\lambda} :=
(4\beta_+^2/\beta_-^2)^{1+\pi}$ for some constant $\pi>0$ follows from $ H(t)/H(t') \le (G(t)/G(t'))^{\pi}$.
It therefore suffices to establish for all $t\ge t_\circ$:
\begin{equation}
  \label{eq:diff}
  \frac{d}{dt}\log H(t) = \frac{H'(t)}{H(t)} \le \pi \frac{G'(t)}{G(t)}
  = \pi \frac{d}{dt}\log G(t),
\end{equation}
{\it i.e.}, to check the inequality
\begin{multline*}
    \frac{\sum_{i=1}^D \lambda_i^{-1} hh'(t\lambda_i)}{\sum_{i =1}^D\lambda_i^{-2} h(t\lambda_i)^2} = \rho t^{-\rho-1}
  \frac{\sum_{i=1}^D \lambda_i^{-\rho-2} \paren{1+\paren{t\lambda_i}^{-\rho}}^{-3}}
       {\sum_{i=1}^D \lambda_i^{-2} \paren{1+\paren{t\lambda_i}^{-\rho}}^{-2}}\\
       \le \pi t^{-\rho-1}
       \frac{\sum_{i=1}^D \lambda_i^{-\rho} \paren{1+\paren{t\lambda_i}^{-\rho}}^{-3}}
            {\sum_{i=1}^D \paren{1+\paren{t\lambda_i}^{-\rho}}^{-2}}
            = \pi \frac{\sum_{i=1}^D \lambda_i hh'(t\lambda_i)}{\sum_{i=1}^D h(t\lambda_i)^2}.
  \end{multline*}
Using \eqref{eq:minineq} again, it is sufficient to check the above inequality when replacing
everywhere $\paren{1+\paren{t\lambda_i}^{-\rho}}^{-1}$ by $\min(1,(t\lambda_i)^\rho)$,
  and $\pi$ by $\pi/32$.

Denoting $k^\ast_t:=\inf \set{k: 1\leq k \leq  : \lambda_k < t^{-1} }\cup \set{D+1}$,
  we thus have to establish the sufficient condition (for some constant $\pi$)
  \begin{equation}
  \frac{\sum_{i< k^*_t} \lambda_i^{-\rho-2} + \sum_{i = k^*_t}^D t^{3\rho} \lambda_i^{2\rho-2}}
       {
         \sum_{i< k^*_t} \lambda_i^{-2}+ \sum_{i = k^*_t}^D  t^{2\rho}  \lambda_i^{2\rho-2}}
       \le \frac{\pi}{32}
       \frac{
         \sum_{i < k^*_t} \lambda_i^{-\rho}
         + \sum_{i = k^*_t}^D t^{3\rho} \lambda_i^{2\rho}}
            {(k^*_t -1) + \sum_{i = k^*_t}^D  t^{2\rho}  \lambda_i^{2\rho}}.
            \label{eq:tylike}
  \end{equation}
Writing the left fraction as $(A_1+A_2)/(B_1+B_2)$ and the right fraction (without $\pi/32$) as $(A_3+A_4)/(B_3+B_4)$,
we check this relation by bounding $A_iB_j\le \frac{\pi}{32}
A_jB_i$ for $i=1,2$, $j=3,4$.
Without loss of generality we assume $1< k^*_t \le D$ (otherwise some products are just zero).
Let us  recall that \eqref{eq:karinfnegpbis} implies that
\begin{equation}
\label{eq:specialkar}
\forall k, 1 \leq k \leq D:\qquad
   k \lambda_{k}^{-\rho} \le  C \sum_{j\le k} \lambda_j^{-\rho},
  \end{equation}
for $C:= L^{\rho/\nu_-}/(1-L^{-1})$. We will also need below a similar
bound with $\lambda_{k}$ replaced by $\lambda_{k+1}$ on the left-hand side. For this,
notice that by Assumption {\bf (S)} we have $\frac{\lambda_k}{\lambda_{k+1}} \le \frac{\lambda_k}{\lambda_{Lk}} \le L^{1/\nu_-}$,
combining with \eqref{eq:specialkar} we get:
\begin{equation}
\label{eq:specialkar'}
\forall k, 1 \le k \le D:\qquad
k \lambda_{k+1}^{-\rho} \le k L^{\rho/\nu_-} \lambda_{k}^{-\rho} \le
C' \sum_{j\le k} \lambda_j^{-\rho}
  \end{equation}
with $C':= L^{2\rho/\nu_-}/(1-L^{-1})$.
The first term to handle is now (using \eqref{eq:specialkar}):
  \begin{multline*}
    A_1B_3=(k^*_t-1) \sum_{i<k^*_t } \lambda_i^{-\rho-2}  \le (k^*_t-1)
    \lambda_{k^*_t-1}^{-\rho} \sum_{i< k^*_t}
    \lambda_i^{-2}\\
    \le 
    C\big(
    {\sum_{i< k^*_t} \lambda_i^{-2}} \big) \big(
    {\sum_{i<k^*_t} \lambda_i^{-\rho}} \big)=CB_1A_3.
  \end{multline*}
  For the second term we clearly have $A_2B_4=A_4B_2$.
  The third term (using \eqref{eq:specialkar'} and the definition of $k^*_t$) is bounded as:
  \begin{align*}
    B_3A_2=(k^*_t-1) \sum_{i=k^*_t}^D t^{3\rho} \lambda_i^{2\rho-2} & \le
    C'  t^{\rho} \lambda_{k^*_t}^{\rho}
      (\sum_{i<k^*_t} \lambda_i^{-\rho})
      (\sum_{i=k^*_t}^D t^{2\rho} \lambda_i^{2\rho-2})\\
      & \le C'
      ( 
      \sum_{i< k^*_t} \lambda_i^{-\rho})  (\sum_{i=k^*_t}^D t^{2\rho} \lambda_i^{2\rho-2})=C'A_3B_2.
  \end{align*}
  For the fourth term we bound, using the definition of $k^*_t$,
    \[
     A_1B_4 =
      \sum_{i<k^*_t} \lambda_i^{-2}(t\lambda_i)^{-\rho} \sum_{i=k^*_t}^D t^{3\rho}\lambda_i^{2\rho}
      \le
      \sum_{i=k^*_t}^D t^{3\rho}\lambda_i^{2\rho} (
      \sum_{i< k^*_t} \lambda_i^{-2})=A_4B_1.
    \]
Hence, \eqref{eq:tylike} is established if we choose $\pi \ge 32 C'$.
\end{proof}

\subsection{Proof of Lemma \ref{LemVart}}\label{proof:LemVart}

We start with considering the case $t < \lambda_D^{-1}$.
Let us introduce the spectral distribution function $F(u)=\#\{i:\lambda_i\ge u\}$ for $u>0$. Then Assumption {\bf (S)} gives  for $k \leq \lfloor D/L\rfloor$:
\[ F(L^{1/\nu_-}u) \ge k \Leftrightarrow \lambda_k\ge L^{1/\nu_-}u\Rightarrow \lambda_{Lk}\ge u \Leftrightarrow F(u) \ge Lk,
\]
so that taking $k=F(L^{1/\nu_-} u)$ in the
above display yields
$F(u)\le \frac{L}{L-1} (F(u)-F(L^{1/\nu_-}u))$,
provided $F(L^{1/\nu_-} u) \leq \lfloor D/L\rfloor$.
We will apply the relation for $u=1/t$ below, and check
\[
u=t^{-1} > \lambda_D \geq L^{-1/\nu_-} \lambda_{\lceil \frac{D}{L} \rceil}
\geq L^{-1/\nu_-} \lambda_{\lfloor \frac{D}{L} \rfloor +1}
\Rightarrow F(L^{1/\nu_-} u) \leq \lfloor D/L\rfloor.
\]
Set again $k^*_t:=\inf\{k\ge 1:\lambda_k<t^{-1}\}\wedge(D+1)= F(1/t) +1$.
Under Assumption {\bf (R)} we conclude for $t\ge t_\circ$
\begin{align*}
\delta^{-2}V_{t,\lambda} = \sum_{i=1}^D (\gamma_i^{(t)})^2
&\le F(1/t)+\sum_{i=k^*_t}^D(t\lambda_i)^{-2}(\gamma_i^{(t)})^2\\
&\le \frac{L}{L-1} \sum_{i:1/t\le \lambda_i<L^{1/\nu_-}/t} (L^{-1/\nu_-}t\lambda_i)^{-2}+\sum_{i=k^*_t}^D(t\lambda_i)^{-2}(\gamma_i^{(t)})^2\\
&\le \frac{L^{1+2/\nu_-}}{L-1}t^{-2}\sum_{i=1}^D \lambda_i^{-2}(\gamma_i^{(t)}/\beta_-)^2\\
&= \tfrac{L^{1+2/\nu_-}}{L-1} \beta_-^{-2}t^{-2} \delta^{-2}V_t.
\end{align*}
This establishes the first inequality.
In the other direction,
denote, as in the proof of Prop. \ref{prop:suffcond}, $j^*_t:=\inf\{i: 1 \leq i
\leq D,\lambda_i<\zeta/t\}\cup \set{ D+1}$, where  $\zeta:=\min(\wt{\zeta},1)$ with
$\wt{\zeta}$  from \eqref{eq:zetafactor}. Under Assumption \ref{AssGComp} we have
\[
t^{-2}\delta^{-2}V_{t} = t^{-2} \sum_{i=1}^D \lambda_i^{-2}(\gamma_i^{(t)})^2
\le  \zeta^{-2} \sum_{i<j^*_t} (\gamma_i^{(t)})^2+ \beta_+^2 t^{2\rho-2}
\sum_{i=j^*_t}^D\lambda_i^{2\rho-2}.
\]
We  concentrate on the second term and assume without loss of generality
that $j^*_t \leq D$ (otherwise this term vanishes). If $D\ge Lj^*_t$, we use
the Karamata relations \eqref{eq:karsupbis}, then \eqref{eq:karinfpospbis},
and $2\rho-2>\nu_+$, to bound
\[
\sum_{i= j^*_t}^D \lambda_i^{2\rho-2}
\le  \frac{(L-1) j^*_t \lambda_{j^*_t}^{2\rho-2}}{1-L^{1-(2\rho-2)/\nu_+}}
\le \frac{L^{2\rho/\nu_-}}{(1-L^{1-(2\rho-2)/\nu_+})}
 \lambda_{j^*_t}^{-2} \sum_{i= j^*_t}^D \lambda_i^{2\rho}.
\]
If $D< Lj^*_t$, then directly using \eqref{Critbis} and $\rho \geq 1$:
\[
\sum_{i= j^*_t}^D \lambda_i^{2\rho-2} \le \lambda_{D}^{-2} \sum_{i= j^*_t}^D \lambda_i^{2\rho} \le L^{2\rho/\nu_-} \lambda_{j^*_t}^{-2} \sum_{i= j^*_t}^D \lambda_i^{2\rho} ,
\]
which implies that the inequality derived in the first case still holds.
Additionally, since $j^*_t \ge 2$ from Lemma~\ref{le:ineqlambda1} and
$t \geq t_\circ$, we have $\lambda_{j^*_t}\ge\lambda_{(L(j^*_t-1))\wedge D}\ge L^{-1/\nu_-}\lambda_{j^*_t-1}\ge L^{-1/\nu_-}\zeta t^{-1}$,
so that
\[
t^{2\rho-2}\sum_{i= j^*_t}^D \lambda_i^{2\rho-2} \le
\tfrac{\zeta^{-2}L^{2(\rho+1)/\nu_-}}{(1-L^{1-(2\rho-2)/\nu_+})}
\sum_{i= j^*_t}^D (t\lambda_i)^{2\rho}
\le
\tfrac{\zeta^{-2}L^{2(\rho+1)/\nu_-}}{(1-L^{1-(2\rho-2)/\nu_+})\beta_-^2}
\sum_{i= j^*_t}^D (\gamma_i^{(t)})^{2}.
\]
Altogether, we obtain the second inequality:
\[ t^{-2}V_t\le 
\tfrac{\zeta^{-2}L^{2(\rho+1)/\nu_-}}{(1-L^{1-(2\rho-2)/\nu_+})\beta_-^2}
V_{t,\lambda}.\]
We turn to the case $t\geq \lambda_D^{-1}$. Then Assumption \ref{AssGComp} implies
that for all $i=1,\ldots,D$ we have $ \gamma_i^{(t)} \geq \beta_-$.
It follows from this and \eqref{eq:karinfnegpbis}:
\begin{align*}
  \delta^{-2} V_t = \sum_{i=1}^D \lambda_i^{-2} (\gamma_i^{(t)})^2
  & \geq \beta_-^2 \sum_{i=1}^D \lambda_i^{-2}\\
  & \geq \beta_-^2 D \lambda_D^{-2} L^{-2/\nu_-}(1-L^{-1})\\
  & \geq \beta_-^2 L^{-2/\nu_-}(1-L^{-1}) \lambda_D^{-2}  \delta^{-2} V_{t,\lambda}\,,
\end{align*}
establishing the first bound in this case. The inequality in the other direction
follows directly from
\[
  \delta^{-2} V_t \leq D \lambda_D^{-2} \leq \beta_-^{-2} \lambda_D^{-2} \delta^{-2} V_{t,\lambda}.
  \]

\subsection{Maximal inequality for weighted $\chi^2$-variables with drift} \label{weighted chi2}

We first recall a result
on the concentration of weighted chi-squared type random variables.

\begin{lemma}[Laurent and Massart, Lemma 1 in \cite{LM}] \label{LemmaLaurentMassart}
Let $(Y_1,\ldots, Y_D)$ be i.i.d. $\mtc{N}(0,1)$ variables.
For nonnegative numbers $a_1,\ldots, a_D$ set
$$Z=\max_{1 \le k \le D}\sum_{i=1}^k a_i (Y_i^2-1).$$
With  $\|a\|_\infty:=\max_{1 \le i \le D}a_i$  the following inequalities hold for any $x>0$:
\begin{align}
&P\big(Z > 2 \norm{a} \sqrt{x}+ 2 \norm{a}_\infty x) < e^{-x},\label{EqLM1}\\
&P\big(Z < - 2 \norm{a} \sqrt{x}\big) < e^{-x}\label{EqLM2}
\end{align}
and also
\begin{align*}
P\big(Z > x)& < \exp\Big(-\frac{1}{4}\frac{x^2}{\norm{a}^2+\norm{a}_\infty x}\Big),\quad
P\big(Z <  - x\big)  < \exp\Big(-\frac{1}{4}\frac{x^2}{\norm{a}^2}\Big).
\end{align*}
\end{lemma}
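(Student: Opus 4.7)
The plan is to lift the standard Laurent--Massart Chernoff bounds for weighted chi-squared sums from the endpoint to the running maximum by means of Doob's maximal inequality applied to an exponential martingale. Writing $S_k:=\sum_{i=1}^k a_i(Y_i^2-1)$ so that $Z=\max_{1\le k\le D}S_k$, I would introduce the two cumulant generating functions
\[
 \phi(\lambda):=\log\E[e^{\lambda(Y_1^2-1)}]=-\lambda-\tfrac12\log(1-2\lambda),\qquad
 \psi(\mu):=\log\E[e^{\mu(1-Y_1^2)}]=\mu-\tfrac12\log(1+2\mu),
\]
which are nonnegative and convex on their respective domains of finiteness. Direct Taylor estimates yield $\phi(\lambda)\le \lambda^2/(1-2\lambda)$ on $(0,1/2)$ and $\psi(\mu)\le\mu^2$ on $[0,\infty)$ (the latter from $\log(1+u)\ge u-u^2/2$ for $u\ge 0$).

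For the upper tail \eqref{EqLM1}, fix $\lambda\in(0,1/(2\norm{a}_\infty))$ and consider
\[
 M_k^{(\lambda)}:=\exp\Bigl(\lambda S_k-\sum_{i=1}^k\phi(\lambda a_i)\Bigr),
\]
which is a positive martingale of unit expectation by independence of the $Y_i$. Because $\phi\ge 0$, the partial cumulant sum is monotone in $k$, so $\{Z\ge t\}\subseteq\{\max_k M_k^{(\lambda)}\ge\exp(\lambda t-\sum_{i=1}^D\phi(\lambda a_i))\}$. Applying Doob's maximal inequality to the nonnegative submartingale $M_k^{(\lambda)}$ and then the uniform bound $\phi(\lambda a_i)\le(\lambda a_i)^2/(1-2\lambda\norm{a}_\infty)$ (using $a_i\le\norm{a}_\infty$) yields
\[
 P(Z\ge t)\le\exp\Bigl(-\lambda t+\frac{\lambda^2\norm{a}^2}{1-2\lambda\norm{a}_\infty}\Bigr).
\]
This is already the sub-Bernstein form of the first bound in the third display of the statement (after choosing $\lambda$ suitably); the tight Bennett-type minimisation over $\lambda$ with $t=2\norm{a}\sqrt{x}+2\norm{a}_\infty x$ then produces \eqref{EqLM1}.

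For the lower tail, observe that $Z<-t$ forces every $S_k<-t$, in particular $S_1<-t$, so $\{Z<-t\}\subseteq\{\min_k S_k<-t\}=\{\max_k(-S_k)>t\}$, and it suffices to control the maximum of $-S_k$. Repeating the Chernoff--Doob argument with $\psi$ in place of $\phi$ (which is finite for all $\mu>0$, removing any constraint on the Chernoff parameter) gives $P(\max_k(-S_k)\ge t)\le \exp(-\mu t+\mu^2\norm{a}^2)$; the optimal choice $\mu=t/(2\norm{a}^2)$ produces the sub-Gaussian form $\exp(-t^2/(4\norm{a}^2))$, which is directly the second inequality of the third display and, with $t=2\norm{a}\sqrt{x}$, exactly \eqref{EqLM2}. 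The main technical point to check carefully is the quantitative Taylor estimate $\phi(\lambda)\le\lambda^2/(1-2\lambda)$: it is precisely this sharp bound (rather than a crude $\phi\le C\lambda^2$) that pins down the prefactor $2$ in front of both $\norm{a}\sqrt{x}$ and $\norm{a}_\infty x$.
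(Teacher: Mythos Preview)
Your proposal is correct and follows essentially the same route as the paper: Doob's maximal inequality is applied to the exponential (sub)martingale $(e^{\lambda S_k})_{k}$ to transfer the Laurent--Massart Chernoff bound for $S_D$ to the running maximum, after which the standard cumulant estimates $\phi(\lambda)\le\lambda^2/(1-2\lambda)$ and $\psi(\mu)\le\mu^2$ produce the stated constants. The only minor redundancy is your treatment of the lower tail: since $Z=\max_k S_k\ge S_D$, the inclusion $\{Z<-t\}\subseteq\{S_D<-t\}$ already lets you invoke the ordinary (non-maximal) Laurent--Massart bound for $S_D$ directly, so the second pass through Doob via $\max_k(-S_k)$ is not needed.
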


Lemma \ref{LemmaLaurentMassart} is stated in a slightly more general setting, since the original result of Laurent and Massart \cite{LM}, based itself on  Lemma 8 in Birg\'e and Massart \cite{BM2}, has no maximum in $k$ for the definition of $Z$. The proof, however, is based on the classical Chernov bound argument, which readily carries over with a maximum: indeed, for $t\ge 0$ and $\lambda>0$,
$$P(Z \ge t) = P\Big(\max_{1 \le k \le D}e^{\lambda\sum_{i=1}^k a_i(Y_i^2-1)} \ge e^{\lambda t}\Big) \le e^{-\lambda t}\E\big[e^{\lambda \sum_{i=1}^D a_i (Y_i^2-1)}\big]$$
by Doob's maximal inequality applied to the submartingale $(e^{\lambda\sum_{i=1}^k a_i(Y_i^2-1)})_{1 \le k \le D}$.

\begin{lemma}\label{LemStochBounddelta}
Work under \ref{AssLambdaL2} of Proposition \ref{prop:suffcond}. Then, for every $\omega>0$ and every $x>0$ we have with probability at least $1-C_1e^{-C_2x}$, where $C_1,C_2>0$ only depend on $c_\lambda$ and $\omega$:
\[\max_{k\ge 1}\sum_{i=1}^k \lambda_i^{-2}(\eps_i^{2}-1-\omega)\le x\lambda_{\floor x\wedge D}^{-2}.
\]
\end{lemma}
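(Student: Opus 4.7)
The plan is to apply the Laurent--Massart maximal inequality \eqref{EqLM1} with weights $a_i=\lambda_i^{-2}$ at each fixed $k$, and then use Assumption \ref{AssLambdaL2} to ensure that the drift $-\omega \sum_{i=1}^{k}\lambda_i^{-2}$ absorbs both the sub-Gaussian and sub-exponential deviation terms, leaving the desired bound $x\lambda_{\lfloor x\wedge D\rfloor}^{-2}$ after a tailored union bound.

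\emph{Step 1 (fixed $k$).} Set $S_k=\sum_{i=1}^k\lambda_i^{-2}$. Since $\lambda_i^{-2}\le\lambda_k^{-2}$ for $i\le k$, one has $\sum_{i=1}^k\lambda_i^{-4}\le\lambda_k^{-2}S_k$. By \eqref{EqLM1} applied to the partial sum, with probability at least $1-e^{-y}$,
\[
\sum_{i=1}^{k}\lambda_i^{-2}(\eps_i^2-1)\le 2\sqrt{y\lambda_k^{-2}S_k}+2\lambda_k^{-2}y.
\]
The AM--GM inequality $2\sqrt{ab}\le a+b$ with $a=\tfrac{\omega}{2}S_k$, $b=\tfrac{2}{\omega}\lambda_k^{-2}y$ absorbs the square root into the drift, yielding
\[
\sum_{i=1}^{k}\lambda_i^{-2}(\eps_i^2-1-\omega)\le \lambda_k^{-2}C(\omega)y-\tfrac{\omega}{2}S_k,\qquad C(\omega):=2+\tfrac{2}{\omega}.
\]
Assumption \ref{AssLambdaL2} now gives $\tfrac{\omega}{2}S_k\ge\tfrac{\omega c_\lambda^2}{2}k\lambda_k^{-2}$, so with probability at least $1-e^{-y}$,
\[
\sum_{i=1}^{k}\lambda_i^{-2}(\eps_i^2-1-\omega)\le \lambda_k^{-2}\Bigl(C(\omega)y-\tfrac{\omega c_\lambda^2}{2}k\Bigr).
\]

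\emph{Step 2 (choice of deviation parameter and union bound).} Pick $c=c(\omega,c_\lambda):=\min\bigl(C(\omega)^{-1},\,\omega c_\lambda^2/(4C(\omega))\bigr)$ and set $y_k:=c\max(x,k)$. For $k\le\lfloor x\wedge D\rfloor$, one has $\lambda_k^{-2}\le\lambda_{\lfloor x\wedge D\rfloor}^{-2}$ and, dropping the (non-positive) drift contribution, $C(\omega)y_k=C(\omega)cx\le x$, so the Step 1 bound is at most $x\lambda_{\lfloor x\wedge D\rfloor}^{-2}$. For $k>\lfloor x\wedge D\rfloor$, $y_k=ck$ and the drift dominates: $C(\omega)ck-\tfrac{\omega c_\lambda^2}{2}k\le-\tfrac{\omega c_\lambda^2}{4}k\le 0$, so the bound is negative and a fortiori $\le x\lambda_{\lfloor x\wedge D\rfloor}^{-2}$.

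\emph{Step 3 (summing the failure probabilities).} Taking a union bound over $k\in\{1,\dots,D\}$,
\[
\sum_{k=1}^{\lfloor x\wedge D\rfloor}e^{-cx}+\sum_{k>\lfloor x\wedge D\rfloor}e^{-ck}\le x\,e^{-cx}+\frac{e^{-c\lfloor x\rfloor}}{1-e^{-c}}\le C_1 e^{-C_2 x}
\]
for suitable $C_1,C_2>0$ depending only on $c_\lambda$ and $\omega$ (absorbing the polynomial factor $x$ by halving the exponent).

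The main obstacle is the tension between two regimes: for $k$ small the prefactor $\lambda_k^{-2}$ must be controlled via $\lambda_k^{-2}\le\lambda_{\lfloor x\wedge D\rfloor}^{-2}$ using the target index, while for $k$ large this prefactor exceeds $\lambda_{\lfloor x\wedge D\rfloor}^{-2}$ and one must rely instead on the drift being strong enough to make the bracket negative. Balancing these regimes forces the $k$-dependent choice $y_k=c\max(x,k)$, which then makes the union bound quantitative.
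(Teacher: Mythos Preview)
Your proof is correct and follows essentially the same strategy as the paper: a union bound over $k$, the Laurent--Massart inequality at each $k$, Assumption~\ref{AssLambdaL2} to convert the drift $-\omega S_k$ into $-\tfrac{\omega c_\lambda^2}{2}k\lambda_k^{-2}$, and a split of the sum at $k^\ast=\lfloor x\rfloor\wedge D$. The only organisational differences are that the paper applies the exponential-tail form of Laurent--Massart with a uniform threshold $r=x\lambda_{k^\ast}^{-2}$ and simplifies the resulting exponent algebraically, whereas you use the additive form \eqref{EqLM1} plus AM--GM with a $k$-dependent deviation level $y_k=c\max(x,k)$; both routes lead to the same two-regime estimate.
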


\begin{proof}
From Lemma \ref{LemmaLaurentMassart} with $\norm{a}_{\ell^p(k)}^p=\sum_{i=1}^k\abs{a_i}^p$ for $p\ge 1$ and the usual modification for $p=\infty$ as well as  $\norm{\lambda^{-2}}_{\ell^p(k)}\le \norm{\lambda^{-2}}_{\ell^\infty(k)}k^{1/p}$ for $p=1,2$ and $\|\lambda^{-2}\|_{\ell^1(k)} \ge c_\lambda^2 k\lambda_k^{-2}=c_\lambda^2 k\|\lambda^{-2}\|_{\ell^\infty(k)}$ by \ref{AssLambdaL2}, we obtain for any integer $k \ge 1$
\begin{align*}
&P\Big(\max_{k=1,\ldots,D}\sum_{i=1}^k \lambda_i^{-2}(\eps_i^{2}-1-\omega)>r\Big)\\
&\le \sum_{k=1}^D P\Big(\sum_{i=1}^k \lambda_i^{-2}(\eps_i^{2}-1-\omega)>r\Big)\\
&= \sum_{k=1}^D P\Big(\sum_{i=1}^k \lambda_i^{-2}(\eps_i^{2}-1)>r+\omega\norm{\lambda^{-2}}_{\ell^1(k)}\Big)\\
&\le \sum_{k=1}^D P\Big(\sum_{i=1}^k \lambda_i^{-2}(\eps_i^{2}-1)>r+\omega c_\lambda^2 k\norm{\lambda^{-2}}_{\ell^\infty(k)}\Big)\\
&\le \sum_{k=1}^D \exp\Big(-\frac 14 \frac{(r+\omega c_\lambda^2 k\norm{\lambda^{-2}}_{\ell^\infty(k)})^2} { \norm{\lambda^{-2}}_{\ell^2(k)}^2+\norm{\lambda^{-2}}_{\ell^\infty(k)}(r+\omega c_\lambda^2 k\norm{\lambda^{-2}}_{\ell^\infty(k)})}\Big)\\
&\le \sum_{k=1}^D \exp\Big(-\frac 14 \frac{(r+\omega c_\lambda^2 k\norm{\lambda^{-2}}_{\ell^\infty(k)})^2} { k\norm{\lambda^{-2}}_{\ell^\infty(k)}^2+\norm{\lambda^{-2}}_{\ell^\infty(k)}(r+\omega c_\lambda^2 k\norm{\lambda^{-2}}_{\ell^\infty(k)})}\Big)\\
&= \sum_{k=1}^D \exp\Big(-\frac 14 \frac{(r\lambda_k^2+c_\lambda^2\omega k)^2} { (1+c_\lambda^2\omega) k+r\lambda_k^2}\Big)\\
&\le \sum_{k=1}^D \exp\Big(-\frac {c_\lambda^2\omega}{4(1+c_\lambda^2\omega)} (r\lambda_k^2+c_\lambda^2\omega k)\Big)\\
&\le \sum_{k=1}^{k^\ast} \exp\Big(-\frac {c_\lambda^2\omega}{4(1+c_\lambda^2\omega)} r\lambda_k^2\Big)+\sum_{k=k^\ast+1}^D\exp\Big(-\frac {c_\lambda^2\omega}{4(1+c_\lambda^2\omega)}c_\lambda^2\omega k\Big)\\
&\le k^\ast \exp\Big(-\frac {c_\lambda^2\omega  r\lambda_{k^\ast}^2}{4(1+c_\lambda^2\omega)}\Big)+\frac{1}{e^{c_\lambda^4\omega^2/(4+4c_\lambda^2\omega)}-1} \exp\Big(-\frac {c_\lambda^4\omega^2 k^\ast}{4(1+c_\lambda^2\omega)}\Big)
\end{align*}
for any $k^\ast$. The choice $k^\ast=\floor x\wedge D$ and $r=x\lambda_{k^\ast}^{-2}$ yields the asserted deviation bound with suitable constants $C_1,C_2>0$.
\end{proof}

\bibliographystyle{plain}       
\bibliography{biblioBHR}           

\checknbdrafts

\end{document}